\pgfplotsset{compat=newest}
\newtheorem{assumption}{Assumption}
\def\shift{0mm} 
\def\ww{3in} 
\def\toyshift{0mm} 
\newif\ifdocenter
\title{Second-order Information Promotes Mini-Batch Robustness in Variance-Reduced Gradients}
\author{Sachin Garg\thanks{Department of Electrical Engineering \& Computer Science, University of Michigan, \texttt{sachg@umich.edu}} \and Albert S. Berahas\thanks{Department of Industrial \& Operations Engineering, University of Michigan, \texttt{albertberahas@gmail.com}} \and  Micha{\l} Derezi{\'n}ski\thanks{Department of Electrical Engineering \& Computer Science, University of Michigan, \texttt{derezin@umich.edu}} }
\date{}
\def\qed{\hfill$\blacksquare$\medskip}
\def\xib{\boldsymbol\xi}
\def\Sigmab{\mathbf{\Sigma}}
\def\g {\mathbf{g}}
\def\Y{\mathbf Y}
\def\R{\mathbf R}
\newcommand{\BlackBox}{\rule{1.5ex}{1.5ex}}  
\DeclareMathOperator*{\argmin}{\mathop{\mathrm{argmin}}}
\def\x{\mathbf x}
\def\y{\mathbf y}
\def\z{\mathbf z}
\def\X{\mathbf X}
\def\A{\mathbf A}
\def\U{\mathbf U}
\def\V{\mathbf V}
\def\M{\mathbf M}
\def\I{\mathbf I}
\def\A{\mathbf A}
\def\E{\mathbb E}
\def\R{\mathbb R} 
\let\origtop\top
\renewcommand\top{{\scriptscriptstyle{\origtop}}} 
\definecolor{silver}{cmyk}{0,0,0,0.3}
\definecolor{yellow}{cmyk}{0,0,0.9,0.0}
\definecolor{reddishyellow}{cmyk}{0,0.22,1.0,0.0}
\definecolor{black}{cmyk}{0,0,0.0,1.0}
\definecolor{darkYellow}{cmyk}{0.2,0.4,1.0,0}
\definecolor{darkSilver}{cmyk}{0,0,0,0.1}
\definecolor{grey}{cmyk}{0,0,0,0.5}
\definecolor{darkgreen}{cmyk}{0.6,0,0.8,0}
\newcommand{\Red}[1]{{\color{red}  {#1}}}
\newcommand{\Green}[1]{{\color{darkgreen}  {#1}}}
\newcommand{\Blue}[1]{\color{blue}{#1}\color{black}}
\newcommand{\Brown}[1]{{\color{brown}{#1}\color{black}}}
\newenvironment{proof}{\par\noindent{\bf Proof\ }}{\hfill\BlackBox\\[2mm]}
\newtheorem{theorem}{Theorem}
\newtheorem{example}{Example}
\newtheorem{lemma}{Lemma}
\newtheorem{proposition}{Proposition}
\newtheorem{remark}{Remark}
\newtheorem{corollary}{Corollary}
\newtheorem{definition}{Definition}
\newtheorem{conjecture}[theorem]{Conjecture}
\newtheorem{claim}[theorem]{Claim}
\newtheorem{assumption}{Assumption}
\newcommand{\norm}[1]{\left\lVert#1\right\rVert}
\newcommand{\normH}[1]{\left\lVert#1\right\rVert_{\mathbf{H}}}
\newcommand{\normHsq}[1]{\left\lVert#1\right\rVert_{\mathbf{H}}^2}
\newcommand{\nsq}[1]{\|#1\|^{2}}
\newcommand{\T}[1]{\Tilde{#1}}
\newcommand{\vi}{\mathbf v}
\newcommand{\ai}{\mathbf a}
\newcommand{\Hi}{\mathbf H}
\newcommand\ind{\boldsymbol{1}}
\newcommand{\mycirc}[1][black]{\textcolor{#1}{\ensuremath\bullet}}
\begin{document}

\maketitle
\begin{abstract}
We show that, for  
finite-sum minimization problems, incorporating partial second-order information of the objective function can dramatically improve the robustness to mini-batch size  
of variance-reduced stochastic gradient methods, 
making them more scalable while retaining their benefits over traditional Newton-type approaches. We demonstrate this phenomenon on a prototypical stochastic second-order algorithm, called Mini-Batch Stochastic Variance-Reduced Newton (\texttt{Mb-SVRN}), which combines variance-reduced gradient estimates with access to an approximate Hessian oracle. In particular, we show that when the data size $n$ is sufficiently large, i.e., $n\gg \alpha^2\kappa$, where $\kappa$ is the condition number and $\alpha$ is the Hessian approximation factor, then \texttt{Mb-SVRN} achieves a fast linear convergence rate that is independent of the gradient mini-batch size $b$, as long $b$ is in the range between $1$ and $b_{\max}=O(n/(\alpha \log n))$. Only after increasing the mini-batch size past this critical point $b_{\max}$, the method begins to transition into a standard Newton-type algorithm which is much more sensitive to the Hessian approximation quality. We demonstrate this phenomenon empirically on benchmark optimization tasks showing that, after tuning the step size, the convergence rate of \texttt{Mb-SVRN} remains fast for a wide range of mini-batch sizes, and the dependence of the phase transition point $b_{\max}$ on the Hessian approximation factor $\alpha$ aligns with our theoretical predictions.

\end{abstract}

\section{Introduction}

Consider the following finite-sum convex minimization problem:
\begin{align}
    f(\x) &= \frac{1}{n}\sum_{i=1}^{n}\psi_i(\x),  
    \qquad\text{with}  \qquad f(\x^*) = \argmin f(\x).  \label{main}
\end{align}
The finite-sum formulation given in \eqref{main} can be used to express many empirical minimization tasks, where for a choice of the underlying parameter vector $\x \in \R^d$, each $\psi_i : \R^d \rightarrow \R$ models the loss on a particular observation \cite{sharpe1989mean,rigollet2011neyman,xiao2014proximal}. In modern-day settings, it is common to encounter problems with a very large number of observations $n$, making deterministic optimization methods prohibitively expensive \cite{bottou2018optimization}. In particular, we are interested in and investigate problems where $n$ is much larger than the condition number of the problem, $\kappa$. In this regime ($n \gg \kappa$), our aim is to develop a stochastic algorithm with guarantees of returning an $\epsilon$-approximate solution i.e., $\T{\x}$ such that $f(\T{\x}) - f(\x^*) \leq \epsilon$ with high probability. As is commonly assumed in the convex optimization literature, we consider the problem setting where $f$ is $\mu$-strongly convex and each $\psi_i$ is $\lambda$-smooth.  

A prototypical optimization method for \eqref{main} is stochastic gradient descent (SGD) \cite{robbins1951stochastic}, which relies on computing a gradient estimate based on randomly sampling a single component $\psi_i$, or more generally, a subset of components (i.e., a mini-batch).
Unfortunately, SGD with constant step size does not converge to the optimal solution $\x^*$, but only to a neighborhood around $\x^*$ that depends on the step size and variance in the stochastic gradient approximations. 
A slower sub-linear convergence rate to the optimal solution can be guaranteed with diminishing step sizes. To mitigate this deficiency of SGD, several works have explored variance reduction techniques for first-order stochastic methods, e.g., \texttt{SDCA}~\cite{shalev2013stochastic}, \texttt{SAG}~\cite{roux2012stochastic}, \texttt{SVRG}~\cite{johnson2013accelerating}, \texttt{Katyusha}~\cite{allen2017katyusha} and \texttt{S2GD}~\cite{konevcny2013semi}, and have derived faster linear convergence rates to the optimal solution. 

One of the popular first-order stochastic optimization methods that employs variance reduction is Stochastic Variance Reduced Gradient (\texttt{SVRG})~\cite{johnson2013accelerating}. The method has two stages and operates with inner and outer loops. At every inner iteration, the method randomly selects a component gradient, $\nabla{\psi}_i(\x)$, and couples this with an estimate of the true gradient computed at every outer iteration to compute a step. This combination, and the periodic computation of the true gradient at outer iterations, leads to a reduction in the variance of the stochastic gradients employed, and allows for global linear convergence guarantees. 
The baseline \texttt{SVRG} method (and its associated analysis) assumes only one observation is sampled at every inner iteration, and as a result, requires $\mathcal{O}(n)$ inner iterations in order to achieve the best convergence rate per data pass. This makes baseline \texttt{SVRG} inherently and highly sequential, and unable to take advantage of modern-day massively parallel computing environments. A natural remedy is to use larger mini-batches (gradient samples sizes; denoted as $b$) at every inner iteration. Unfortunately, this natural idea does not have the desired advantages as increasing the mini-batch size $b$ in \texttt{SVRG} leads to deterioration in the convergence rate per data pass. 
In particular, one can show that regardless of the chosen mini-batch size $b$, \texttt{SVRG} requires as many as $\mathcal{O}(\kappa)$ inner iterations to ensure fast convergence rate,

again rendering the method unable to take advantage of parallelization. 

On the other hand, one can use stochastic second-order methods for solving \eqref{main}, including Subsampled Newton ~\cite{roosta2019sub,bollapragada2019exact,erdogdu2015convergence}, Newton Sketch ~\cite{pilanci2017newton,berahas2020investigation}, and others \cite{moritz2016linearly,mokhtari2018iqn,derezinski2018batch,berahas2016multi}. These methods use either full gradients or require extremely large gradient mini-batches $b \gg \kappa$ at every iteration \cite{derezinski2022stochastic}, and as a result, their convergence rate (per data pass) is sensitive to the quality of the Hessian estimate and to the mini-batch size $b$. Several works \cite{gonen2016solving,gower2016stochastic,liu2019acceleration,moritz2016linearly} have also explored the benefits of including second-order information to accelerate first-order stochastic variance-reduced methods. However, these are primarily first-order stochastic methods (and analyzed as such) resulting again in highly sequential algorithms that are unable to exploit modern parallel computing frameworks. 

The shortcomings of the aforementioned stochastic methods  raise a natural question, which is central to our work:
\begin{center}
    \emph{Can second-order information plus variance-reduction achieve an accelerated and robust convergence rate for a wide range of gradient mini-batch sizes?}
\end{center}
\smallskip

In this work, we provide an affirmative answer to the above question. To this end, following recent prior work \cite{derezinski2022stochastic}, we analyze the convergence behavior of a prototypical stochastic optimization algorithm called Mini-batch Stochastic Variance-Reduced Newton (\texttt{Mb-SVRN}, Algorithm \ref{alg}), which combines variance reduction in the stochastic gradient, via a scheme based on SVRG \cite{johnson2013accelerating}, with second-order information from a Hessian oracle. Our Hessian oracle is general and returns an estimate that satisfies a relatively mild $\alpha$-approximation condition. This condition can be, for instance, satisfied by computing the Hessian of the subsampled objective (as in Subsampled Newton \cite{roosta2019sub}, which is what we use in our experiments), but can also be satisfied by other Hessian approximation techniques such as sketching. 

\begin{figure}
    \includegraphics[width=0.95\textwidth,clip=true]{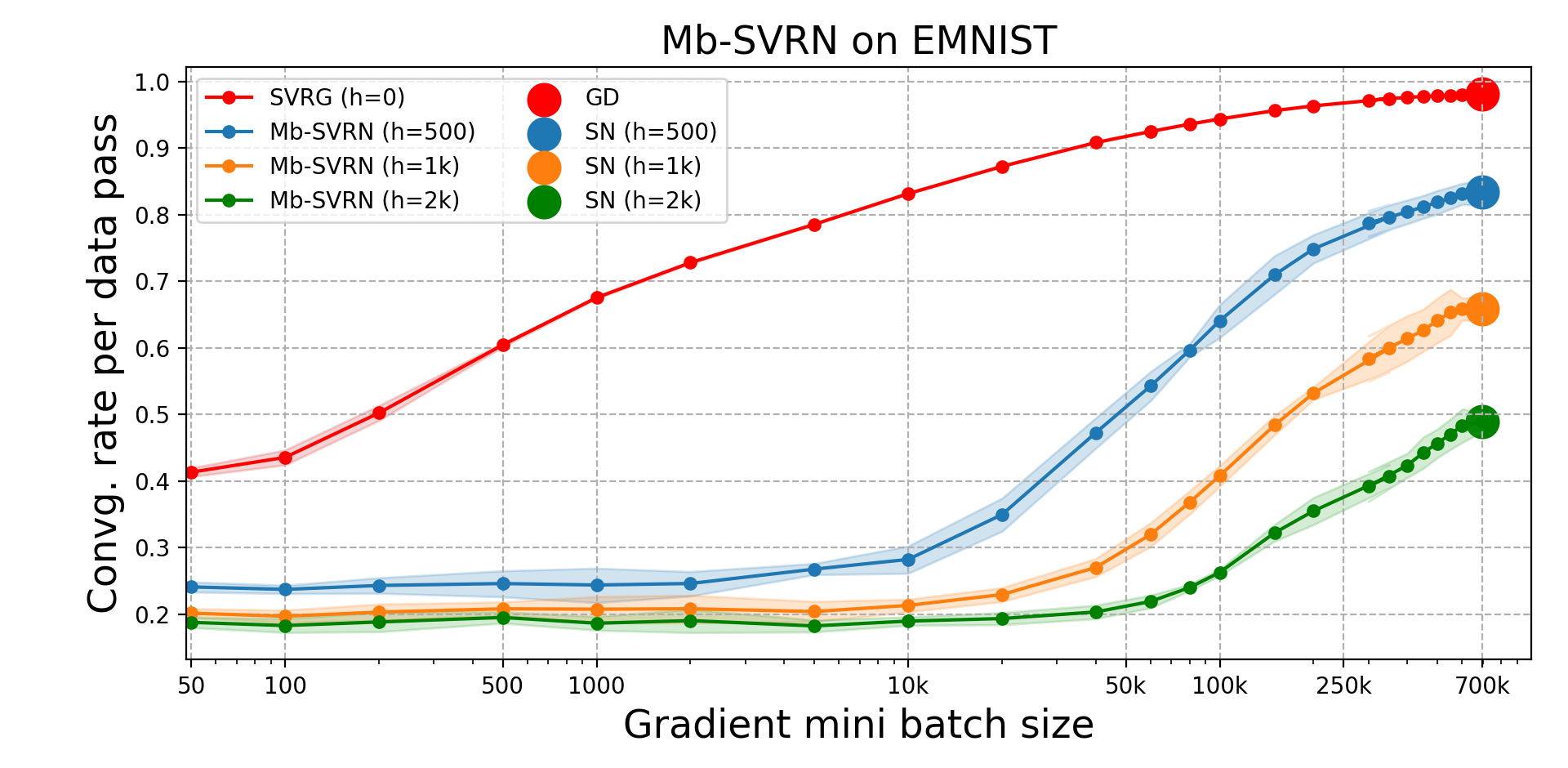}
    \caption{Convergence rate of \texttt{Mb-SVRN} (smaller is better, see Section \ref{Experiments}), as we vary gradient mini-batch size $b$ and Hessian sample size $h$, including the extreme cases of \texttt{SN} ($b\!=\!n$) and \texttt{SVRG} ($h\!=\!0$). The plot shows that after adding some second-order information (increasing~$h$), the convergence rate of \texttt{Mb-SVRN} quickly becomes robust to gradient mini-batch size. On the other hand, the performance of \texttt{SVRG} rapidly degrades as we increase the gradient mini-batch size $b$, which ultimately turns it into simple gradient descent (\texttt{GD}).
    }
 	\label{fig_1}
\end{figure}

In our main result (Theorem \ref{main_result}) we show that even a relatively weak Hessian estimate leads to a dramatic improvement in robustness to the gradient mini-batch size for \texttt{Mb-SVRN}: the method is endowed with \emph{the same} fast local linear convergence guarantee \emph{for any} gradient mini-batch size up to a critical point $b_{\max}$, which we characterize in terms of the number of function components $n$ and the Hessian oracle approximation factor $\alpha$. 
Remarkably, unlike results for most stochastic gradient methods, our result holds with high probability rather than merely in expectation, both for large and small mini-batch sizes, and is an improvement over comparable results \cite{derezinski2022stochastic} both in convergence rate and robustness to mini-batch sizes (see Section \ref{related_work}). As $b$ increases beyond $b_{\max}$, the method has to inevitably transition into a standard Newton-type algorithm with a weaker convergence rate that depends much more heavily on the Hessian approximation factor $\alpha$. We demonstrate this phenomenon empirically on the Logistic Regression task on the \texttt{EMNIST} and \texttt{CIFAR10} datasets (see Figure \ref{fig_1} and Section \ref{Experiments}); we show 
that the convergence rate of \texttt{Mb-SVRN} indeed exhibits robustness to 
mini-batch size $b$, while at the same time having a substantially better convergence rate per data pass than the Subsampled Newton method (\texttt{SN}) that uses full gradients. Furthermore, \texttt{Mb-SVRN} proves to be remarkably robust to the Hessian approximation quality in our experiments, showing that even low-quality Hessian estimates can significantly improve the scalability of the method.
\paragraph{Outline.}
In Section \ref{contribution}, we provide an informal version of our main result along with a comparison with the popular \texttt{SVRG} and \texttt{SN} methods.  
In Section \ref{related_work}, we give a detailed overview of the related work in stochastic variance reduction and second-order methods. 
In  Section \ref{technical_analysis} we present our technical analysis for the local convergence of \texttt{Mb-SVRN}, whereas in Theorem \ref{global} we provide a global convergence guarantee. We provide experimental evidence in agreement with our theoretical results in Section \ref{Experiments}.

\section{Main Convergence Result} \label{contribution}
In this section, we provide an informal version of our main result, Theorem \ref{final_result}. We start by formalizing the assumptions and algorithmic setup. 
\begin{assumption}{\label{assumption}}
Consider a twice continuously differentiable function $f: \R^d \rightarrow \R$ defined as in \eqref{main}, where $\psi_i: \R^d \rightarrow \R$ and $i \in \{1,2,...,n\}$. We make the following standard $\lambda$-smoothness and $\mu$-strong convexity assumptions, where $\kappa=\lambda/\mu$ denotes the condition number.
\begin{enumerate} 
    \item For each $i \in \{1,2,...,n\}$, $\psi_i$ is a $\lambda$-smooth convex function:
    \begin{align*}
        \psi_i(\y) \leq \psi_i(\x) + \nabla{\psi}_i(\x)^\top(\y-\x) + \frac{\lambda}{2}\nsq{\y-\x}, \ \  \ \forall \ \x,\y \in \mathbb{R}^d,
    \end{align*}
    and $f$ is a $\mu$-strongly convex function:
    \begin{align*}
         f(\y) \geq f(\x) + \nabla{f}(\x)^\top(\y-\x) + \frac{\mu }{2}\nsq{\y-\x}, \ \  \ \forall \ \x,\y \in \mathbb{R}^d.    
         \end{align*}
\end{enumerate}
We also assume that the Hessians of $f$ are Lipschitz continuous.
\begin{enumerate} 
    \item[2.] The Hessians $\nabla{f}^2(\x)$ of the objective function are $L$-Lipschitz continuous:
    \begin{align*}
        \norm{\nabla{f}^2(\x) - \nabla{f}^2(\y)} \leq L\norm{\x-\y},  \ \  \ \forall \ \x,\y \in \mathbb{R}^d.
    \end{align*}
\end{enumerate}
\end{assumption}

In our computational setup, we assume that the algorithm has access to a stochastic gradient and Hessian at any point via the following gradient and Hessian oracles. 
\begin{definition}[Gradient oracle]
    Given $1 \leq b \leq n$ and indices $i_1,...,i_b\in\{1,...,n\}$, the gradient oracle $\mathcal G$ returns $\hat\g\sim \mathcal G(\{i_1,...,i_b\})$ such that:
\begin{align*}
    \hat\g(\x) = \frac1b\sum_{j=1}^b\nabla\psi_{i_{j}}(\x),\qquad\text{for any $\x\in\R^d.$}
\end{align*}
\end{definition}
We formalize this notion of a gradient oracle to highlight that the algorithm only accesses component gradients in batches, enabling hardware acceleration for the gradient computations, which is one of the main motivations of this work. We also use the gradient oracle to measure the per-data-pass convergence rate of the algorithms. Here, a data pass refers to a sequence of gradient oracle queries that access $n$ component gradients.
\begin{definition}[Hessian oracle]\label{d:hessian-oracle}
For an $\alpha\geq 1$, and any $\x\in\R^d$, the Hessian oracle $\mathcal{H}=\mathcal{H}_{\alpha}$ returns 
$\hat\Hi \sim\mathcal{H}(\x)$ such that:
\begin{align*}
    \frac{1}{\sqrt{\alpha}} \nabla^2{f}(\x) \preceq \hat{\Hi} \preceq \sqrt{\alpha} 
    \nabla^2{f}(\x).
\end{align*}
\end{definition}

 We refer to $\alpha$ as the Hessian approximation factor and denote the approximation guarantee as $\hat{\Hi} \approx_{\sqrt{\alpha}} \nabla^2{f}(\x)$. Here, we mention that an approximate Hessian, $\hat{\Hi}$ satisfying the above guarantee can be constructed with probability $1-\delta$ for any $\delta>0$. We deliberately do not specify the method for 
 obtaining the Hessian estimate, although 
 in the numerical experiments we will use a sub-sampled Hessian estimate of the form $\hat\Hi = \frac1h\sum_{j}^h\nabla^2\psi_{i_j}(\x)$, where the sample size $1 \leq h \leq n$ controls the quality of the Hessian approximation (e.g., as in Figure \ref{fig_1}). We treat the cost of Hessian approximation separate from the gradient data passes, as the complexity of Hessian estimation is largely problem and method dependent. Naturally, from a computational stand-point, larger $\alpha$ generally means smaller Hessian estimation cost.
 \subsection{Main algorithm and result}
In this section, we present an 
algorithm that follows the above computational model (Algorithm \ref{alg}, \texttt{Mb-SVRN}), and provide our main technical result, the local convergence analysis of this algorithm across different values of the Hessian approximation factor $\alpha$ and gradient mini-batch sizes $b$. We note that the algorithm was deliberately chosen as a natural extension of both a standard stochastic variance-reduced first-order method (\texttt{SVRG}) and a standard Stochastic Newton-type method (\texttt{SN}), so that we can explore the effect of variance reduction in conjunction with 
second-order information on the convergence rate and robustness.

We now informally state our main result, which is a local convergence guarantee for Algorithm \ref{alg} (for completeness, we also provide a global convergence guarantee in Theorem \ref{global}). In this result, we show a high-probability  convergence bound, that holds for gradient mini-batch sizes $b$ as small as $1$ and as large as $\mathcal{O}(\frac{n}{\alpha\log n})$, where the fast linear rate of convergence is independent of the mini-batch size.

\begin{algorithm}
\caption{Mini-batch Stochastic Variance-Reduced Newton (\texttt{Mb-SVRN})}\label{alg}
\begin{algorithmic}
\Require $\Tilde{\x}_0$, gradient mini-batch size $b$, gradient/Hessian oracles $\mathcal G, \mathcal H$, inner iterations $t_{\max}$
\For{$s=0,1,2,\cdots,$}
\State Compute the Hessian estimate $\hat{\Hi}_s \sim \mathcal{H}(\tilde\x_s)$ 
\State Compute the full gradient $\g_s = \g(\tilde\x_s)$ \ for \ $\g \sim \mathcal G(\{1,...,n\})$
\State Set $\x_{0,s} = \Tilde{\x}_s$
\For{$t=0,1,2,\cdots,t_{\max}-1$}
\State Compute $\hat\g\!\sim\! \mathcal G(\{i_1,...,i_b\})$ for $i_1,...,i_b\!\sim \!\{1,...,n\}$ uniformly random
\State Compute $\hat{\g}_{t,s}= \hat\g(\x_{t,s})$ and $\hat\g_{0,s}=\hat\g(\tilde\x_s)$ 
\State Compute variance-reduced gradient $\bar{\g}_{t,s} = \hat{\g}_{t,s} - \hat{\g}_{0,s} + \g_s$
\State Update $\x_{t+1,s} = \x_{t,s} - \eta\hat{\Hi}_s^{-1}\bar{\g}_{t,s} $
\EndFor 
\State $\Tilde{\x}_{s+1} = \x_{t_{max},s}$
\EndFor
\end{algorithmic}
\end{algorithm}

\begin{theorem}[Main result; informal Theorem \ref{final_result}]{\label{main_result}}
Suppose that assumption \ref{assumption} holds and $n\gtrsim \alpha^2\kappa$. Then, in a local neighborhood around $\x^*$, Algorithm \ref{alg} using Hessian $\alpha$-approximations and any gradient mini-batch size $b\lesssim \frac n{\alpha\log n}$, with $t_{\max}=n/b$ and optimally chosen $\eta$, satisfies the following high-probability convergence bound:
\begin{align*}
    f(\T\x_s) - f(\x^*) \leq \rho^s\cdot\left(f(\T\x_0) - f(\x^*)\right),\quad\text{where}\quad \rho\lesssim \frac{\alpha^2\kappa}{n}.
\end{align*}
\end{theorem}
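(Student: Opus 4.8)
The plan is to analyze one outer epoch $s$ of Algorithm~\ref{alg} in isolation, establish a per-epoch multiplicative decrease of the suboptimality gap that holds with high probability, and then induct on $s$. Work inside a local ball $\mathcal B=\{\x:\norm{\x-\x^*}\le r_0\}$ with $r_0$ small relative to $\mu/L$ (with a further, $\alpha$-dependent shrinkage that the argument will force); the induction hypothesis is $\T\x_s\in\mathcal B$, which is the standing ``local neighborhood'' assumption at $s=0$. Writing $\H_*:=\nabla^2 f(\x^*)$ and using $L$-Lipschitzness of the Hessian, on $\mathcal B$ one has the exact identity $\nabla f(\x_{t,s})=\bar\H_{t,s}(\x_{t,s}-\x^*)$ with $\bar\H_{t,s}:=\int_0^1\nabla^2 f(\x^*+\tau(\x_{t,s}-\x^*))\,\mathrm d\tau$, and both $\bar\H_{t,s}\approx_{1+o(1)}\H_*$ and $\hat\Hi_s\approx_{\sqrt\alpha}\nabla^2 f(\T\x_s)\approx_{1+o(1)}\H_*$. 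Passing to rescaled errors $\u_{t,s}:=\H_*^{1/2}(\x_{t,s}-\x^*)$, so that $\Delta_{t,s}:=\nsq{\u_{t,s}}$ is comparable to $f(\x_{t,s})-f(\x^*)$ up to a $1+o(1)$ factor, and decomposing $\bar\g_{t,s}=\nabla f(\x_{t,s})+\xib_{t,s}$ with $\xib_{t,s}=\frac1b\sum_j[\nabla\psi_{i_j}(\x_{t,s})-\nabla\psi_{i_j}(\T\x_s)]-[\nabla f(\x_{t,s})-\nabla f(\T\x_s)]$ a conditionally mean-zero noise, the inner update becomes
\[
\u_{t+1,s}=(\I-\eta\,\B_{t,s})\u_{t,s}-\eta\,\z_{t,s},\qquad \B_{t,s}:=\H_*^{1/2}\hat\Hi_s^{-1}\bar\H_{t,s}\H_*^{-1/2},\quad \z_{t,s}:=\H_*^{1/2}\hat\Hi_s^{-1}\xib_{t,s}.
\]

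Next I would prove a one-inner-step recursion. For the deterministic operator: on $\mathcal B$, $\B_{t,s}$ is within $O(Lr_0/\mu)$ in operator norm of the symmetric matrix $\H_*^{1/2}\hat\Hi_s^{-1}\H_*^{1/2}$, whose spectrum lies in $[\alpha^{-1/2},\alpha^{1/2}]$ by the Hessian-oracle guarantee, so for $\eta\lesssim\alpha^{-1/2}$ we get $\norm{\I-\eta\B_{t,s}}\le 1-\frac{\eta}{2\sqrt\alpha}=:\gamma$. For the noise: the co-coercivity bound $\E_i\nsq{\nabla\psi_i(\x)-\nabla\psi_i(\x^*)}\le 2\lambda\,(f(\x)-f(\x^*))$ (expectation over uniform $i$, using convexity and $\lambda$-smoothness of each $\psi_i$ and $\nabla f(\x^*)=0$) yields $\E[\nsq{\xib_{t,s}}\mid\mathcal F_{t,s}]\lesssim\frac\lambda b(\Delta_{t,s}+\Delta_{0,s})$, and since $\hat\Hi_s^{-1}\H_*\hat\Hi_s^{-1}\preceq\frac\alpha\mu\I$ this gives $\E[\nsq{\z_{t,s}}\mid\mathcal F_{t,s}]\lesssim\frac{\alpha\kappa}b(\Delta_{t,s}+\Delta_{0,s})$. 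Since the cross term vanishes, these combine into
\[
\E[\Delta_{t+1,s}\mid\mathcal F_{t,s}]\le q\,\Delta_{t,s}+c\,\Delta_{0,s},\qquad q:=\gamma^2+O(\tfrac{\alpha\kappa\eta^2}{b}),\quad c:=O(\tfrac{\alpha\kappa\eta^2}{b}).
\]

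Then I would unroll over the $t_{\max}=n/b$ inner steps and optimize $\eta$. Summing the recursion gives $\E[\Delta_{t_{\max},s}\mid\mathcal F_{0,s}]\le(q^{n/b}+\tfrac{c}{1-q})\Delta_{0,s}$; the hypothesis $n\gtrsim\alpha^2\kappa$ is exactly what makes the quadratic corrections inside $q$ lower order against $\eta/\sqrt\alpha$, so $q^{n/b}\asymp e^{-\eta n/(\sqrt\alpha b)}$ and $\tfrac{c}{1-q}\asymp\tfrac{\alpha^{3/2}\kappa\eta}{b}$. With $u:=\eta n/(\sqrt\alpha b)$ the bound reads $\asymp e^{-u}+\tfrac{\alpha^2\kappa}{n}u$, minimized at $u\asymp\log\tfrac{n}{\alpha^2\kappa}$, i.e.\ $\eta\asymp\tfrac{\sqrt\alpha b}{n}\log\tfrac{n}{\alpha^2\kappa}$ — which is $\lesssim\alpha^{-1/2}$ precisely when $b\lesssim\tfrac{n}{\alpha\log n}$, recovering the stated admissible range for the mini-batch size. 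The resulting per-epoch bound $\E[\Delta_{t_{\max},s}\mid\mathcal F_{0,s}]\lesssim\tfrac{\alpha^2\kappa}{n}\Delta_{0,s}$ (matching the claimed $\rho$, up to the logarithmic factor suppressed in the informal statement) is, with this $\eta$, the \emph{same} for every $b\in[1,b_{\max}]$ — which is the robustness phenomenon; for $b>b_{\max}$ one can no longer take $\eta$ small enough, and the method degrades into a Newton-type scheme. Converting back to suboptimality gaps yields $\E[f(\T\x_{s+1})-f(\x^*)\mid\mathcal F_{0,s}]\le\rho\,(f(\T\x_s)-f(\x^*))$.

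Finally — and this is the step I expect to be the main obstacle — I would upgrade the in-expectation per-epoch bound to a high-probability one and, simultaneously, control staying in $\mathcal B$. On $\mathcal B$ the noise increments are bounded ($\norm{\xib_{t,s}}\lesssim\lambda r_0$ by $\lambda$-smoothness, hence $\norm{\z_{t,s}}\lesssim\sqrt{\alpha/\mu}\,\lambda r_0$) and their conditional second moments obey the estimate above; introducing the stopping time $\tau$ at the first inner step where $\Delta_{t,s}$ exceeds a fixed multiple of $\Delta_{0,s}$ (equivalently, the first exit from a slightly enlarged ball) and applying a Freedman/Bernstein martingale inequality to the stopped recursion $\u_{t+1,s}=(\I-\eta\B_{t,s})\u_{t,s}-\eta\z_{t,s}$ shows that, with probability $1-1/\poly(n)$, one has $\tau>t_{\max}$ together with $\Delta_{t_{\max},s}\lesssim\tfrac{\alpha^2\kappa}{n}\Delta_{0,s}$; it is precisely this concentration — available because each epoch averages a full data pass of $n$ component gradients whose variance is tied to the current suboptimality — that yields a high-probability rather than merely in-expectation guarantee. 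A union bound over epochs, together with the $1-\delta$ success events of the Hessian oracle, then closes the induction: with high probability $\T\x_s\in\mathcal B$ and $f(\T\x_s)-f(\x^*)\le\rho^s(f(\T\x_0)-f(\x^*))$ for all $s$. The genuinely delicate points are the stopping-time/Freedman bookkeeping (since $\z_{t,s}$ depends on $\x_{t,s}$, which depends on the earlier noise), fixing $r_0$ so that the Hessian-approximation and Hessian-Lipschitz perturbations are truly lower order against $\eta/\sqrt\alpha$, and tracking the logarithmic factors through the $\eta$-optimization; the co-coercivity variance bound, the operator-norm contraction, and the geometric unrolling are routine.
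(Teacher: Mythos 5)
Your outline matches the paper up through the one-step expected recursion and the step-size optimization: the per-iteration bound $\E_t[\normHsq{\Delta_{t+1}}]\le(1-\tfrac{\eta}{8\sqrt\alpha})\normHsq{\Delta_t}+O(\tfrac{\alpha\kappa\eta^2}{b})\normHsq{\T\Delta}$ is exactly the paper's Theorem~\ref{Exp1}, and your unrolling/$\eta$-optimization (with $u=\eta n/(\sqrt\alpha b)$ and $u^*\asymp\log\tfrac{n}{\alpha^2\kappa}$) recovers the same $b_{\max}$ and $\rho$. But the high-probability step, which you flag as the main obstacle and leave as "Freedman on the stopped recursion," is where the proposal has a genuine gap, and the missing ideas are precisely the paper's technical contribution.

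First, one stopping barrier is not enough. You propose to stop only at the first time $\Delta_{t}$ exceeds a multiple of $\Delta_0$. But the one-step drift $\E_t[\Delta_{t+1}]\le q\Delta_t+c\Delta_0$ has an additive noise floor proportional to $\Delta_0$, so once $\Delta_t$ falls below $\asymp\tfrac{c}{1-q}\Delta_0\asymp\tfrac{\alpha^2\kappa}{n}\Delta_0$, the relation $\E_t[\Delta_{t+1}]\le\Delta_t$ \emph{fails} and there is no supermartingale to stop. The paper handles this by stopping at \emph{both} the upper barrier ($\normHsq{\Delta_t}>e^2\normHsq{\T\Delta}$) and a lower barrier ($\normHsq{\Delta_t}<\tfrac{\kappa\alpha^2\gamma}{n}\normHsq{\T\Delta}$), and additionally introduces random \emph{resume} times $L_i$ because the process can subsequently re-enter the admissible band; the final argument (Theorem~\ref{final_result}) then chains conditional probabilities over these $T_i,L_i$. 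Your single-sided stopping time has no way to certify the submartingale property over the whole epoch.

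Second, and more fundamentally, Freedman on the raw error process will not deliver the multiplicative decay you need. What you want to conclude is $\Delta_{t_{\max}}\lesssim\tfrac{\alpha^2\kappa}{n}\Delta_0$, i.e.\ a decay by a huge factor $\rho\ll1$. A Doob-type decomposition of $\Delta_t$ (or of $\Delta_t-\tfrac{c\Delta_0}{1-q}$, or of $q^{-t}\Delta_t$) either has a noise floor comparable to $\Delta_0$ or has increments that blow up geometrically in $t$; in either case a Freedman/Bernstein deviation bound gives $\Delta_{t_{\max}}\le\Delta_0+O(\text{deviation})$, not $\Delta_{t_{\max}}\le\rho\Delta_0$. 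The paper's construction that makes this go through is to pass to a \emph{log-scale} process $Y_{t}=\ln\normH{\Delta_{L_i+t}}+\sum_{j}\ln\!\bigl(\tfrac{\normH{\Delta_{L_i+j}}}{\E_{L_i+j}\normH{\Delta_{L_i+j+1}}}\bigr)$, so the geometric contraction becomes an additive drift and the increments are $O(1)$-bounded (via a ratio bound $\tfrac12\le\tfrac{\normH{\Delta_{t+1}}}{\E_t\normH{\Delta_{t+1}}}\le2$, Lemma~\ref{high_prob}, which itself only holds in the admissible band — another reason the lower barrier is unavoidable). Freedman on this log-process gives $\ln\normH{\Delta_{T_i}}\le\ln\normH{\Delta_{L_i}}-t\ln(\tfrac1{1-\rho})+O(1)$, which is the multiplicative guarantee. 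Without the log transform and the two-sided stopping/resume bookkeeping, the Freedman step you sketch does not yield the claimed rate.
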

\begin{remark}
    Under our assumption that $n\gtrsim \alpha^2\kappa$, the convergence rate satisfies $\rho\ll 1/2$, i.e., it is a fast condition-number-free 
    linear rate of convergence that gets better for larger data sizes $n$. Since we used $t_{\max}=n/b$, one outer iteration of the algorithm corresponds to roughly two passes over the data (as measured by the gradient oracle calls).
\end{remark}

\subsection{Discussion}\label{s:discussion}
Our convergence analysis in Theorem \ref{main_result} shows that incorporating second-order information into a stochastic variance-reduced gradient method makes it robust to increasing the gradient mini-batch size $b$ up to the point where $b=\mathcal{O}\big(\frac n{\alpha\log n}\big)$. This is illustrated in Figure \ref{fig_toy} (compare to the empirical Figure \ref{fig_1}), where the robust regime corresponds to the convergence rate $\rho$ staying flat as we vary $b$. Improving the Hessian oracle quality (smaller $\alpha$) expands the robust regime, allowing for even larger mini-batch sizes with a flat convergence rate profile. 

\begin{figure}
    \includegraphics[width=.95\textwidth,clip=true]{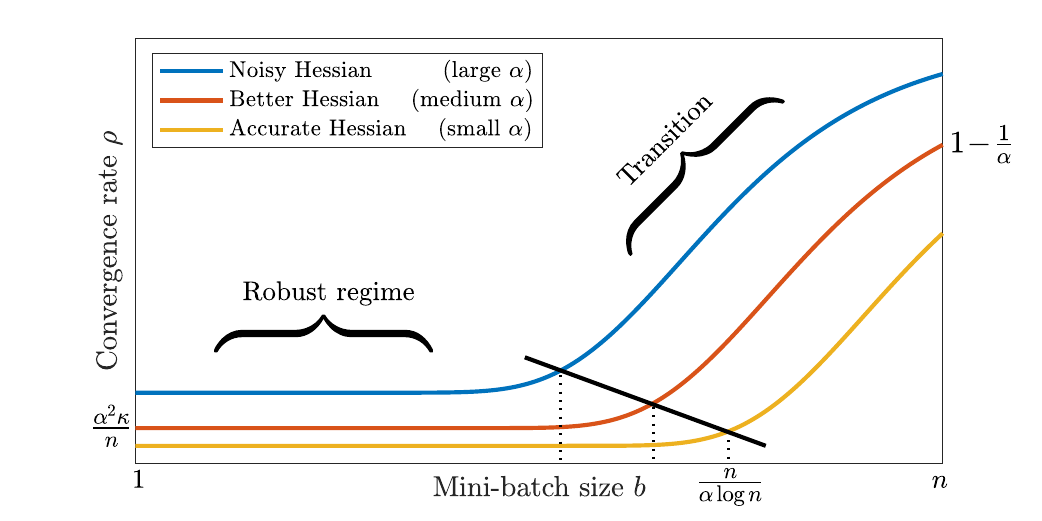}
    \caption{Illustration of the \texttt{Mb-SVRN} convergence analysis from Theorem \ref{main_result} (for $n\gtrsim \alpha^2\kappa$), showing how the regime of robustness to gradient mini-batch size $b$ depends on the quality of the Hessian oracle (smaller $\alpha$ means better Hessian estimate). As we increase $b$ past $\frac n{\alpha\log n}$, the algorithm gradually transitions to a full-gradient Newton-type method.}
 	\label{fig_toy}
\end{figure}

Note that, unless $\alpha$ is very close to $1$ (extremely accurate Hessian oracle), the convergence per data pass must eventually degrade, reaching $\rho \approx 1 - \frac1\alpha$, which is the rate achieved by the corresponding Newton-type method with full gradients (e.g., see Lemma \ref{lma2}). The transition point of $b\approx \frac n{\alpha\log n}$ arises naturally in this setting, because the convergence rate after one outer iteration of \texttt{Mb-SVRN} could not be better than $\rho \approx \left(1-\frac1\alpha\right)^{t_{\max}}$ where $t_{\max}=n/b$ (obtained by treating the stochastic gradient noise as negligible).

\paragraph{Implications for stochastic gradient methods.} 
In the case where the Hessian oracle always returns the identity, \texttt{Mb-SVRN} reduces to a first-order stochastic gradient method which is a variant of \texttt{SVRG} \cite{johnson2013accelerating}. So, it is natural to ask what our convergence analysis implies about the effect of second-order preconitioning on an SVRG-type algorithm. In our setting, \texttt{SVRG} with mini-batch size $b$ achieves an expected convergence rate of $\rho =\mathcal{O}\left(\sqrt{\frac{b\kappa}n}\right)$, compared to a high-probability convergence rate of $\rho = \mathcal{O}\left(\frac{\kappa}{n}\right)$ for \texttt{Mb-SVRN}. This means that, while for small mini-batches $b=\mathcal{O}(1)$ the advantage of preconditioning is not significant, the robust regime of \texttt{Mb-SVRN} (which is not present in \texttt{SVRG}) leads to a gap in convergence rates between the two methods that becomes larger as we increase $b$ (see Figure \ref{fig_1}). This suggests the following general rule-of-thumb:
\begin{center}
    \emph{To improve the robustness of an SVRG-type method to the gradient mini-batch size, one can precondition the method with second-order information based on a Hessian estimate.}
\end{center}

\paragraph{Implications for Newton-type methods.}
A different perspective on our results arises if our starting point is a full-gradient Newton-type method, such as Subsampled Newton \cite{roosta2019sub} or Newton Sketch \cite{pilanci2017newton}, which also arises as a simple corner case of \texttt{Mb-SVRN} by choosing mini-batch size $b=n$ and inner iterations $t_{\max}=1$. In this setting, as mentioned above, the convergence rate of the method behaves as $\rho\approx 1-\frac1\alpha$, which means that it is highly dependent on the quality of the Hessian oracle via the factor $\alpha$. As we decrease $b$ in \texttt{Mb-SVRN}, the method gradually transitions into an SVRG-type method that is more robust to Hessian quality. We note that while our theoretical convergence result in Theorem \ref{main_result} still exhibits a dependence on $\alpha$, empirical results suggest that for large datasets this dependence is much less significant in the robust regime of \texttt{Mb-SVRN} than it is in the full-gradient regime. This suggests the following general prescription:
\begin{center}
    \emph{To improve the robustness of a Newton-type method to the Hessian estimation quality, one can replace the full gradients with variance-reduced sub-sampled gradient estimates.}
\end{center}

\section{Related Work}
\label{related_work}

Among first-order methods, \cite{shalev2013stochastic} propose \texttt{SDCA}, a stochastic variant of dual coordinate ascent.
In \cite{schmidt2017minimizing}, the \texttt{SAG} algorithm is proposed, which
maintains in memory the most recent gradient value computed
for each training example, updates the gradient value by selecting a random sample 
at every iteration, and computes the average gradient. Both \texttt{SDCA} and \texttt{SAG} achieve convergence guarantees comparable to those of \texttt{SVRG} (see Section \ref{s:discussion}). 
In \cite{xiao2014proximal}, the authors propose 
a variant of \texttt{SVRG}, called \texttt{Prox-SVRG} for minimizing the sum of two convex functions, $f$ and $g$, where $f$ takes the form as in \eqref{main} and $g$ is a regularizer. They leverage the composite structure of the problem and incorporate a weighted sampling strategy to improve the complexity so that it depends on the so-called average condition number, $\hat{\kappa}$, instead of $\kappa$. We note that a similar weighted sampling strategy can be incorporated into \texttt{Mb-SVRN} with little additional effort. On the other hand, methods such as \texttt{Katyusha} \cite{allen2017katyusha,allen2018katyusha} and \texttt{Catalyst} \cite{lin2015universal}
use Nesterov-type techniques to accelerate \texttt{SVRG} and achieve
improved convergence guarantees where the condition number $\kappa$ is replaced by its square root $\sqrt\kappa$. In particular, \texttt{Catalyst} \cite{lin2015universal} adds a strongly convex penalty term to the objective function at every iteration, 
and solves a sub-problem followed by introducing momentum. That said, similar to \texttt{SVRG}, all methods discussed in this paragraph are first-order methods and the convergence rate (per data pass) of these methods is not robust to the
gradient mini-batch size. 

To this end, methods that incorporate stochastic second-order information have been proposed. In \cite{gonen2016solving}, using similar approaches to those in \cite{xiao2014proximal}, a sketched preconditioned \texttt{SVRG} method is proposed, for solving ridge regression problems, that reduces the average condition number of the problem. The works \cite{gower2016stochastic,lucchi2015variance,moritz2016linearly}, employ L-BFGS-type updates to approximate the Hessian inverse which is then used as a preconditioner to
variance-reduced gradients. In \cite{liu2019acceleration}, the authors proposed an inexact preconditioned second-order method. All these methods are inherently first-order methods that use preconditioning to improve the dependence of \texttt{SVRG}-type methods on the condition number, and for practical considerations. In particular, their existing convergence analyses do not show scalability and robustness to gradient mini-batch sizes. 
 
 Popular stochastic second-order methods, e.g., Subsampled Newton ~\cite{roosta2019sub,bollapragada2019exact,erdogdu2015convergence} and Newton Sketch ~\cite{pilanci2017newton,berahas2020investigation}, either use the full gradient or require extremely large gradient mini-batch sizes $b \gg \kappa$ at every iteration \cite{derezinski2022stochastic}. As a result, the convergence rate of these methods depends heavily on the quality of Hessian approximation, and the methods are unsuitable for implementations with low computational budget.
 Several other works have used strategies to reduce the variance of the stochastic second-order methods. One such work is Incremental Quasi-Newton \cite{mokhtari2018iqn} which relies on memory and uses aggregated gradient and Hessian information coupled with solving a second-order Taylor approximation of $f$ in a local neighborhood around $\x^*$ in order to reduce the variance of stochastic estimates employed. In \cite{bollapragada2018progressive} the authors propose methods that combine adaptive sampling strategies to reduce the variance and quasi-Newton updating to robustify the method. Finally, in \cite{na2023hessian} the authors show that averaging certain types of stochastic Hessian estimates (such as those based on subsampling) results in variance reduction and 
 improved local convergence guarantees when used with full gradients. Unfortunately, the convergence rates of these methods are not robust to the sample size used for computing the stochastic gradients and/or the quality of Hessian approximation. Also, we note that these methods update the Hessian approximations at every iteration and are not directly comparable to our setting in which
 we update the Hessian approximation using a multi-stage approach.

Recently, \cite{derezinski2022stochastic} investigated the effect of variance-reduced gradient estimates in conjunction with a Newton-type method, using an algorithm 
comparable to \texttt{Mb-SVRN}. They show that using gradient mini-batch size $b$ of the order $\mathcal{O}\left(\tfrac{n}{\alpha\log n}\right)$, one can achieve a local linear convergence rate of $\rho=\Tilde{\mathcal{O}}\left(\tfrac{\alpha^3\kappa}{n}\right)$. However, deviating from this prescribed mini-batch size (in either direction) causes the local convergence guarantee to rapidly degrade, and in particular, the results are entirely vacuous unless $b \gg \alpha^2\kappa$. Our work can be viewed as a direct improvement over this prior work: first, through a better convergence rate dependence on $\alpha$ (with $\alpha^2$ versus $\alpha^3$); and second, in showing that this faster convergence can be achieved for any mini-batch size $b\lesssim \frac{n}{\alpha\log n}$. We achieve this by using a fundamentally different analysis via a chain of martingale concentration arguments combined with using much smaller step sizes in the inner iterations, which allows us to compensate for the additional stochasticity in the gradients in the small and moderate mini-batch size regimes.

\section{Convergence Analysis} \label{technical_analysis}

In this section, we present our technical analysis that concludes with the main convergence result, Theorem \ref{final_result}, informally stated earlier as Theorem \ref{main_result}. We start by introducing some auxiliary lemmas in Section \ref{intlemma}. Then, in Theorem~\ref{Exp1} (Section~\ref{sec.4.2}), we establish a one inner iteration convergence result in expectation, forming the key building block of our analysis. Then, in Section~\ref{High_probability_analysis}, we present our main technical contribution; we construct a submartingale framework with random start and stopping times, which is needed to establish fast high-probability linear convergence of outer iterates in Theorem \ref{final_result}, our main convergence result (Section~\ref{convg_martingale}). At the end of this section, we supplement our main local convergence results with a global convergence guarantee for \texttt{Mb-SVRN} in Theorem \ref{global_convergence} (Section~\ref{global}).

\paragraph{Notation.} Let $\Tilde{\x}_0$ denote the starting outer iterate and $\Tilde{\x}_s$ denote the outer iterate after $s$ outer iterations. Within the $(s+1)^{st}$ outer iteration, the inner iterates are indexed as $\x_{t,s}$. Since our results require working only within one outer iteration, we drop the subscript $s$ throughout the analysis, and denote $\Tilde{\x}_0$ as $\Tilde{\x}$ and corresponding $t^{th}$ inner iterate as $\x_t$. Similarly let $\Tilde\Hi$ and $\Hi_t$ denote the Hessian at $\T\x$ and $\x_t$, respectively. For gradient mini-batch size $b$, let $\hat{\T\g}$ denote $\frac{1}{b}\sum_{j=1}^{b}{\nabla{\psi}_{i_j}(\T\x)}$, let $\hat{\g}_t$ denote $\frac{1}{b}\sum_{j=1}^{b}{\nabla{\psi}_{i_j}(\x_{t})}$ and let $\bar{\g}_t= \hat{\g}_t -\hat{\T\g} + \T\g$ denote the variance-reduced gradient at $\x_t$, where $\T\g$ represents exact gradient at $\T\x$. Moreover, $\g_t$ represents exact gradient at $\x_t$, $\Hi$ represents Hessian at $\x^*$, and let $\T\Delta = \T\x-\x^*$, $\Delta_t = \x_t-\x^*$. Also, we use $\eta$ to denote the fixed step size. Finally, we refer to $\E_t$ as the conditional expectation given the event that the algorithm has reached iterate $\x_t$ starting with the outer iterate $\T\x$.

Next, we define the local convergence neighborhood, by using the notion of a Mahalanobis norm: $\|\x\|_{\M} = \sqrt{\x^\top\M\x}$, for any positive semi-definite matrix $\M$ and vector $\x$. 
\begin{definition}\label{d:neighborhood}
For a given $\epsilon_0 > 0$, we define a local neighborhood $\mathcal{U}_f(\epsilon_0)$ around $\x^*$, as follows: 
$$\mathcal{U}_f(\epsilon_0):= \Big\{\x: \normHsq{\x - \x^*} < \frac{\mu^{3/2}}{L}\epsilon_0\Big\},$$
where $L>0$ is the Lipschitz constant for Hessians of $f$, and $\mu>0$ is the strong convexity parameter for $f$.
\end{definition}

\subsection{Auxiliary lemmas}{\label{intlemma}}

In this section we present auxiliary lemmas that will be used in the analysis. The first result upper bounds the variance of the \texttt{SVRG}-type stochastic gradient in terms of the smoothness parameter $\lambda$ and gradient mini-batch size $b$. Similar variance bounds have also been derived in prior works \cite{johnson2013accelerating,derezinski2022stochastic,berahas2023accelerating}. 

\begin{lemma}[\textbf{Upper bound on variance of stochastic gradient}]\label{lma1}
Let $\T\x \in \mathcal{U}_f(\epsilon_0\eta)$ and $\x_t \in \mathcal{U}_f(c\epsilon_0\eta)$ for some $c\geq 1$. Then: 
\begin{align*}
\E_t [\nsq{\bar{\g}_{t}-\g_{t}}] \leq (1+c\epsilon_0\eta)\frac{\lambda}{b} \nsq{\Tilde \x-\x_{t}}_{\Hi}.
\end{align*}
 
\end{lemma}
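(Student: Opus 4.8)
The plan is to follow the standard SVRG variance-bound template, adapted to the mini-batch setting and to the Mahalanobis norm $\|\cdot\|_{\Hi}$. First I would write the variance-reduced gradient minus the exact gradient at $\x_t$ as an empirical average of i.i.d.\ centered terms: since $\bar\g_t = \hat\g_t - \hat{\T\g} + \T\g$ with $\hat\g_t = \frac1b\sum_{j=1}^b \nabla\psi_{i_j}(\x_t)$ and $\hat{\T\g} = \frac1b\sum_{j=1}^b \nabla\psi_{i_j}(\T\x)$ sharing the same sample indices $i_1,\dots,i_b$, we get
\begin{align*}
\bar\g_t - \g_t = \frac1b\sum_{j=1}^b\Big( \big(\nabla\psi_{i_j}(\x_t) - \nabla\psi_{i_j}(\T\x)\big) - \big(\nabla f(\x_t) - \nabla f(\T\x)\big)\Big).
\end{align*}
Each summand is a centered random vector (its expectation over a uniformly random index is exactly $\nabla f(\x_t)-\nabla f(\T\x)$ minus itself), so the cross terms vanish and $\E_t\|\bar\g_t-\g_t\|^2 = \frac1b\,\E_{i}\big\|\big(\nabla\psi_i(\x_t)-\nabla\psi_i(\T\x)\big) - \big(\nabla f(\x_t)-\nabla f(\T\x)\big)\big\|^2$, where $i$ is a single uniform index. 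Then I would drop the mean-subtraction using $\E\|Z-\E Z\|^2\le \E\|Z\|^2$, reducing the task to bounding $\frac1b\,\E_i\|\nabla\psi_i(\x_t)-\nabla\psi_i(\T\x)\|^2 = \frac1{nb}\sum_{i=1}^n\|\nabla\psi_i(\x_t)-\nabla\psi_i(\T\x)\|^2$.

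The second step is to convert this sum of squared gradient differences into the quantity $\frac\lambda b\|\T\x-\x_t\|^2_{\Hi}$. For a $\lambda$-smooth convex $\psi_i$, the standard co-coercivity/self-bounding inequality gives $\|\nabla\psi_i(\x_t)-\nabla\psi_i(\T\x)\|^2 \le \lambda\,(\nabla\psi_i(\x_t)-\nabla\psi_i(\T\x))^\top(\x_t-\T\x)$; averaging over $i$ yields $\frac1n\sum_i\|\nabla\psi_i(\x_t)-\nabla\psi_i(\T\x)\|^2 \le \lambda\,(\nabla f(\x_t)-\nabla f(\T\x))^\top(\x_t-\T\x)$. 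Now $\nabla f(\x_t)-\nabla f(\T\x) = \big(\int_0^1 \nabla^2 f(\T\x + \tau(\x_t-\T\x))\,d\tau\big)(\x_t-\T\x)$, so the right-hand side equals $\lambda\,(\x_t-\T\x)^\top \bar{\Hi}\,(\x_t-\T\x)$ where $\bar\Hi$ is the averaged Hessian along the segment. To finish I would use Hessian Lipschitzness (Assumption~\ref{assumption}, part 2) to compare $\bar\Hi$ with $\Hi = \nabla^2 f(\x^*)$: each Hessian on the segment differs from $\Hi$ by at most $L$ times the distance to $\x^*$, which is controlled because both $\T\x$ and $\x_t$ lie in $\mathcal{U}_f(c\epsilon_0\eta)$, giving $\|\x-\x^*\|_{\Hi}^2 < \frac{\mu^{3/2}}{L}c\epsilon_0\eta$ and hence $\|\x-\x^*\| < \sqrt{\mu c\epsilon_0\eta/L}\cdot(\text{const})$; translated through $\mu I \preceq \Hi$, the perturbation is of relative order $c\epsilon_0\eta$, so $\bar\Hi \preceq (1+c\epsilon_0\eta)\Hi$. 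Combining, $\frac1{nb}\sum_i\|\nabla\psi_i(\x_t)-\nabla\psi_i(\T\x)\|^2 \le (1+c\epsilon_0\eta)\frac\lambda b\|\T\x-\x_t\|^2_{\Hi}$, which is the claim.

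The main obstacle I anticipate is the bookkeeping in the last step: getting the multiplicative factor to come out as exactly $(1+c\epsilon_0\eta)$ rather than $(1+O(c\epsilon_0\eta))$ requires carefully tracking how the neighborhood radius $\frac{\mu^{3/2}}{L}c\epsilon_0\eta$ feeds through the Lipschitz bound $\|\nabla^2 f(\x) - \Hi\|\le L\|\x-\x^*\|$ and the norm conversions $\mu I\preceq \Hi \preceq \lambda I$. In particular one must bound $\|\x-\x^*\|$ by $\frac1{\sqrt\mu}\|\x-\x^*\|_{\Hi}$ and then ensure the radius of $\mathcal{U}_f$ was defined (with the $\mu^{3/2}$ and the square) precisely so that $L\|\x-\x^*\|\le \mu\cdot(c\epsilon_0\eta)$, making $\nabla^2 f(\x)\preceq \Hi + \mu c\epsilon_0\eta\, I \preceq (1+c\epsilon_0\eta)\Hi$. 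This is the one place where the exact shape of Definition~\ref{d:neighborhood} matters, and it is worth checking that the segment between $\T\x$ and $\x_t$ stays inside a neighborhood of comparable radius (it does, since $\mathcal{U}_f$ is convex in the $\Hi$-norm ball sense and $\x_t$ is in the larger $c\epsilon_0\eta$-neighborhood). Everything else — the i.i.d.\ decomposition, dropping the mean, and co-coercivity — is routine.
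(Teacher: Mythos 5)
Your proposal is correct and follows essentially the same route as the paper's proof. Both use the i.i.d.\ decomposition to pull out the $1/b$ factor, drop the mean-subtraction via $\E\|Z-\E Z\|^2\le \E\|Z\|^2$, invoke co-coercivity of each $\lambda$-smooth convex $\psi_i$, and then translate the resulting quantity into the $\Hi$-norm using the neighborhood condition to control the Hessian along the segment from $\T\x$ to $\x_t$. The only cosmetic difference is in the middle step: you use the inner-product form of co-coercivity, $\|\nabla\psi_i(\x_t)-\nabla\psi_i(\T\x)\|^2 \le \lambda\langle\nabla\psi_i(\x_t)-\nabla\psi_i(\T\x),\x_t-\T\x\rangle$, and then apply the mean-value theorem to the gradient (so the averaged Hessian $\bar\Hi$ appears as an integral); the paper instead uses the Bregman form, $\|\nabla\psi_i(\x_t)-\nabla\psi_i(\T\x)\|^2\le 2\lambda\,D_{\psi_i}(\T\x,\x_t)$, averages to get $2\lambda\,D_f(\T\x,\x_t)$, and applies a quadratic Taylor expansion of $f$ to write this as $\lambda\,(\T\x-\x_t)^\top\nabla^2 f(\z)(\T\x-\x_t)$ for a single intermediate point $\z$. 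These are interchangeable routes to the same bound, and both land on the same factor $(1+c\epsilon_0\eta)\frac{\lambda}{b}$. Your observation that the segment stays in $\mathcal{U}_f(c\epsilon_0\eta)$ by convexity of the $\Hi$-norm ball is the same justification the paper uses implicitly.
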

\begin{proof}
   Refer to Appendix \ref{plma1}. 
\end{proof}
In the following result, we use standard approximate Newton analysis to establish that, using an $\alpha$-approximate Hessian at $\T\x$ (Definition \ref{d:hessian-oracle}, denoted as $\hat{\Hi}\approx_{\sqrt{\alpha}}\Tilde\Hi$), a Newton step with a sufficiently small step size $\eta$ reduces the distance to $\x^*$ (in $\Hi$ norm), at least by a factor of $\left(1-\frac{\eta}{8\sqrt\alpha}\right)$. We call this an approximate Newton step since the gradient is exact and only the second-order information is stochastic.

\begin{lemma}[Guaranteed error reduction for approximate Newton]{\label{lma2}} Consider an approximate Newton step where $\x_{\textnormal{AN}} =\x_{t}-\eta\hat{\Hi}^{-1}\g_{t}$, $\hat{\Hi}\approx_{\sqrt{\alpha}}\Tilde\Hi$, $\T\x \in \mathcal{U}_f(\epsilon_0\eta)$, and $ \x_{t} \in \mathcal{U}_f(c\epsilon_0\eta)$, for some $c \geq 1$, $\epsilon_0 < \frac{1}{8c\sqrt{\alpha}}$ and $\eta < \frac{1}{4\sqrt{\alpha}}$. Then:
 \begin{align*}
     \normH{\x_{\textnormal{AN}}-\x^*} \leq \left(1-\frac{\eta}{8\sqrt{\alpha}}\right)\normH{\x_t-\x^*},
 \end{align*} 
 where $\Hi$ denotes the Hessian matrix at $\x^*$.
\end{lemma}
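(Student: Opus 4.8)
\textbf{Proof strategy for Lemma \ref{lma2}.}
The plan is to follow the classical approximate-Newton argument, splitting the error of $\x_{\textnormal{AN}}-\x^*$ into an ``exact Newton'' contribution and a ``Hessian mismatch'' contribution, and then controlling both in the $\Hi$-norm (Hessian at $\x^*$). First I would write
$\x_{\textnormal{AN}}-\x^* = (\x_t-\x^*) - \eta\hat{\Hi}^{-1}\g_t$, and insert the identity $\g_t = \bar\Hi_t(\x_t-\x^*)$ where $\bar\Hi_t := \int_0^1 \nabla^2 f(\x^* + \tau(\x_t-\x^*))\,d\tau$ is the averaged Hessian along the segment; this is the standard way to linearize the exact gradient without discarding higher-order terms. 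That gives
$\x_{\textnormal{AN}}-\x^* = \big(\I - \eta\hat{\Hi}^{-1}\bar\Hi_t\big)(\x_t-\x^*)$, so it remains to bound the operator norm $\big\|\Hi^{1/2}\big(\I - \eta\hat{\Hi}^{-1}\bar\Hi_t\big)\Hi^{-1/2}\big\|$, since $\normH{\x_{\textnormal{AN}}-\x^*} \le \big\|\Hi^{1/2}(\I-\eta\hat\Hi^{-1}\bar\Hi_t)\Hi^{-1/2}\big\|\cdot\normH{\x_t-\x^*}$.

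The key step is to compare the three Hessians $\Hi=\nabla^2 f(\x^*)$, $\tilde\Hi=\nabla^2 f(\T\x)$, and $\bar\Hi_t$ (and hence $\hat\Hi$, which is $\sqrt\alpha$-close to $\tilde\Hi$). The neighborhood definition $\mathcal{U}_f(\cdot)$ together with $L$-Lipschitz Hessians forces $\T\x$ and $\x_t$ to be within $O(\epsilon_0\eta)$ (in $\Hi$-norm, hence within $O(\epsilon_0\eta/\sqrt\mu)$ in Euclidean norm) of $\x^*$, so $\|\bar\Hi_t - \Hi\|\le L\cdot\|\x_t-\x^*\|$ and $\|\tilde\Hi-\Hi\|\le L\cdot\|\T\x-\x^*\|$ are both of order $\mu\cdot\epsilon_0\eta$ after using $\normHsq{\cdot}<\mu^{3/2}\epsilon_0\eta/L$ and $\Hi\succeq\mu\I$. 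Thus $\bar\Hi_t \approx_{1+O(\epsilon_0\eta)}\Hi$ and $\tilde\Hi\approx_{1+O(\epsilon_0\eta)}\Hi$, which combined with $\hat\Hi\approx_{\sqrt\alpha}\tilde\Hi$ yields $\hat\Hi\approx_{\sqrt\alpha(1+O(\epsilon_0\eta))}\Hi$ and $\hat\Hi\approx_{\sqrt\alpha(1+O(\epsilon_0\eta))}\bar\Hi_t$. Then a standard spectral estimate gives, with $\beta:=\sqrt\alpha(1+O(\epsilon_0\eta))$,
$(1/\beta)\I \preceq \Hi^{1/2}\hat\Hi^{-1}\bar\Hi_t\,\Hi^{-1/2}\text{-symmetrized} \preceq \beta\I$ in the appropriate sense, so the eigenvalues of $\I - \eta\hat\Hi^{-1}\bar\Hi_t$ (conjugated into the $\Hi$ geometry) lie in $[1-\eta\beta,\,1-\eta/\beta]$; one has to be slightly careful because $\hat\Hi^{-1}\bar\Hi_t$ is not symmetric, so I would work with the symmetric matrix $\bar\Hi_t^{1/2}\hat\Hi^{-1}\bar\Hi_t^{1/2}$ and change norms between $\Hi$ and $\bar\Hi_t$ at the cost of another $1+O(\epsilon_0\eta)$ factor. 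Plugging $\beta\le\sqrt\alpha(1+O(\epsilon_0\eta))$ and using $\eta<\frac1{4\sqrt\alpha}$, $\epsilon_0<\frac1{8c\sqrt\alpha}$ to absorb the lower-order terms, the contraction factor is at most $1-\frac{\eta}{\beta}+O(\epsilon_0\eta^2) \le 1-\frac{\eta}{8\sqrt\alpha}$ after bookkeeping the constants.

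I expect the main obstacle to be the bookkeeping of the chain of $(1+O(\epsilon_0\eta))$-type multiplicative distortions: one must track how the Lipschitz-Hessian perturbations propagate through the non-symmetric product $\hat\Hi^{-1}\bar\Hi_t$ and through the change between the $\Hi$-norm and the $\bar\Hi_t$-norm, and then verify that the hypotheses $\epsilon_0<\frac1{8c\sqrt\alpha}$ and $\eta<\frac1{4\sqrt\alpha}$ are exactly strong enough to reduce the crude bound $1-\frac{\eta}{\sqrt\alpha}(1-O(\epsilon_0\eta\sqrt\alpha))$ to the clean $1-\frac{\eta}{8\sqrt\alpha}$. The conceptual content is entirely standard approximate-Newton analysis; the work is in choosing the constants so the stated inequality holds with the factor $\frac18$ rather than something messier.
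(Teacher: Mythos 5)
Your proposal follows essentially the same path as the paper's proof: write $\x_{\textnormal{AN}}-\x^* = (\I - \eta\hat\Hi^{-1}\bar\Hi_t)(\x_t-\x^*)$ via the averaged Hessian $\bar\Hi_t$, symmetrize to $\bar\Hi_t^{1/2}\hat\Hi^{-1}\bar\Hi_t^{1/2}$ and bound in the $\bar\Hi_t$-norm, use the Lipschitz-Hessian neighborhood bounds to get $\bar\Hi_t\approx_{1+c\epsilon_0\eta}\Hi$ and $\hat\Hi\approx_{\sqrt\alpha(1+\epsilon_0\eta)}\Hi$, and then convert back to the $\Hi$-norm while absorbing the $(1+O(\epsilon_0\eta))$ distortions using the hypotheses on $\epsilon_0$ and $\eta$. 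The argument and the decomposition are the ones the paper uses; what remains is only the explicit constant bookkeeping you correctly flag as the remaining work.
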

\begin{proof}
Refer to Appendix \ref{plma2}.
\end{proof}
Note that the approximate Newton step in Lemma \ref{lma2} uses an exact gradient at every iteration, requiring the use of all $n$ data samples. In our work, in addition to approximate Hessian, we incorporate variance-reduced gradients as well, calculated using $b$ samples out of $n$. For a high probability analysis, we require an upper bound on the noise introduced due to the stochasticity in the gradients. The next result provides an upper bound on the noise (with high probability) in a single iteration introduced due to noise in the stochastic gradient.

\begin{lemma}[High-probability bound on stochastic gradient noise]{\label{lma3}}
Let $\epsilon_0 < \frac{1}{8c\sqrt{\alpha}}$, $\eta < \frac{1}{4\sqrt{\alpha}}$, $ \T\x \in \mathcal{U}_f(\epsilon_0\eta)$, $\x_t \in \mathcal{U}_f(c\epsilon_0\eta)$. Then, for any $\delta>0$, with probability at least $1-\delta\frac{b^2}{n^2}$ (depending on the gradient mini-batch size $b$):
\begin{align*}
    \norm{\g_t-\bar{\g}_t} \leq
    \begin{cases}
(16\sqrt{\kappa\lambda}/3b)
\ln(n/b\delta)\normH{\x_t-\T\x} &\text{for $b< \frac{8}{9}\kappa$,}\\[2mm]
4\sqrt{\lambda/b}\,\ln(n/b\delta)\normH{\x_t-\T\x} &\text{for $b \geq \frac{8}{9}\kappa$.}
\end{cases}
\end{align*}
\end{lemma}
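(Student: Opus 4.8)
The plan is to write the stochastic gradient noise $\g_t-\bar\g_t$ as a rescaled sum of i.i.d.\ mean-zero random vectors and control it with a Bernstein-type tail bound. Since Algorithm~\ref{alg} reuses the same indices $i_1,\dots,i_b$ in forming both $\hat\g_t$ and $\hat{\T\g}$, we have $\bar\g_t-\g_t=\tfrac1b\sum_{j=1}^b\z_j$, where
\begin{align*}
\z_j:=\nabla\psi_{i_j}(\x_t)-\nabla\psi_{i_j}(\T\x)-\big(\nabla f(\x_t)-\nabla f(\T\x)\big),
\end{align*}
and, conditionally on reaching $\x_t$ (i.e.\ under $\E_t$), the $\z_j$ are i.i.d.\ and mean-zero because $\E[\nabla\psi_{i_j}(\cdot)]=\nabla f(\cdot)$ under uniform sampling. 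Hence it suffices to bound $\big\|\sum_{j=1}^b\z_j\big\|$ with probability $1-\delta b^2/n^2$.

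First I would record the two ingredients a vector Bernstein bound needs. For an almost-sure bound, the triangle inequality together with $\lambda$-smoothness of each $\psi_i$ and of $f$ gives $\|\z_j\|\le 2\lambda\|\x_t-\T\x\|$; since $\Hi\succeq\mu\I$ by $\mu$-strong convexity, this becomes $\|\z_j\|\le\tfrac{2\lambda}{\sqrt\mu}\normH{\x_t-\T\x}=2\sqrt{\lambda\kappa}\,\normH{\x_t-\T\x}=:R$. For the variance, the scaling $\bar\g_t-\g_t=\tfrac1b\sum_j\z_j$ gives $\E_t\big\|\sum_j\z_j\big\|^2=b^2\,\E_t\|\bar\g_t-\g_t\|^2$, and applying Lemma~\ref{lma1} (using the hypotheses $\epsilon_0<\tfrac1{8c\sqrt\alpha}$, $\eta<\tfrac1{4\sqrt\alpha}$ to bound $1+c\epsilon_0\eta\le2$) yields $\E_t\big\|\sum_j\z_j\big\|^2\le 2\lambda b\,\normHsq{\x_t-\T\x}=:v$.

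Next I would apply a \emph{dimension-free} Bernstein inequality for a sum of $b$ independent random vectors with $\|\z_j\|\le R$ a.s.\ and $\sum_j\E\|\z_j\|^2\le v$ — using a version that avoids the $\log d$ factor of the generic matrix Bernstein bound, at the cost of a worse absolute constant — which produces a tail of the form $\Prob\big(\|\sum_j\z_j\|\ge u\big)\le\exp\!\big(-\tfrac{u^2/2}{v+Ru/3}\big)$. Setting the right side equal to $\delta b^2/n^2$ means solving $\tfrac{u^2/2}{v+Ru/3}=\ln(n^2/(\delta b^2))$ for $u$; using $\ln(n^2/(\delta b^2))\le 2\ln(n/(b\delta))$ (valid for $\delta\le1$), the quadratic formula and division by $b$ give, with probability at least $1-\delta b^2/n^2$,
\begin{align*}
\|\g_t-\bar\g_t\|\;\lesssim\;\frac{\sqrt{v\,\ln(n/b\delta)}}{b}+\frac{R\,\ln(n/b\delta)}{b}\;\lesssim\;\Big(\sqrt{\tfrac{\lambda}{b}}+\frac{\sqrt{\lambda\kappa}}{b}\Big)\ln\!\big(\tfrac{n}{b\delta}\big)\,\normH{\x_t-\T\x},
\end{align*}
where I used $\sqrt{\ln(n/b\delta)}\le\ln(n/b\delta)$ to leave just one power of the logarithm on the variance term. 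The second term dominates the first precisely when $b\lesssim\kappa$, which is the source of the two cases; a careful accounting of the constants coming from the dimension-free Bernstein bound, the $\Hi\succeq\mu\I$ conversion, and the $(1+c\epsilon_0\eta)$ factor of Lemma~\ref{lma1} pins the threshold at $b=\tfrac89\kappa$ and the prefactors at $\tfrac{16}{3}\sqrt{\kappa\lambda}/b$ (for $b<\tfrac89\kappa$) and $4\sqrt{\lambda/b}$ (for $b\ge\tfrac89\kappa$). The slightly unusual failure probability $\delta b^2/n^2$ is chosen so that a later union bound over the $t_{\max}=n/b$ inner iterations (and its reuse in the submartingale argument) stays controlled; here it enters only through the $\ln(n^2/(\delta b^2))$ term.

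The step I expect to be the main obstacle is the concentration bound itself: getting a tail on $\|\sum_j\z_j\|$ that scales with $\ln(n/b\delta)$ but not with the ambient dimension $d$ requires a genuinely dimension-free vector Bernstein inequality rather than the off-the-shelf matrix Bernstein bound, and reproducing the precise constants $16/3$, $4$ and threshold $\tfrac89\kappa$ requires carefully tracking all the multiplicative slack. The remaining steps — decomposing the noise, the smoothness-based almost-sure bound, and the appeal to Lemma~\ref{lma1} for the variance — are routine.
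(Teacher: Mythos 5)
Your proposal is correct and follows essentially the same route as the paper's proof: decompose $\g_t-\bar\g_t$ as $\tfrac1b\sum_{j=1}^b\z_j$ with i.i.d.\ mean-zero $\z_j$, bound $\|\z_j\|\le 2\lambda\|\x_t-\T\x\|$ almost surely and the variance via $\lambda$-smoothness (Lemma~\ref{lma1}), then apply a dimension-free vector Bernstein inequality (the paper uses Corollary~4.1 of Minsker) and split into regimes according to whether the variance or the almost-sure term dominates. The paper's regime split $b\lessgtr\tfrac89\kappa$ is exactly the threshold $\sigma<\tfrac23 U$ you identified, so the only (cosmetic) difference is that the paper directly selects the dominating term in each regime rather than invoking $\sqrt{\ln}\le\ln$ to merge them.
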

\begin{proof}
   Refer to Appendix \ref{plma3}. 
\end{proof}
The next result provides an upper bound on the gradient $\g_t=\g(\x_t)$ in terms of the distance of $\x_t$ to the minimizer $\x^*$. The result is useful in establishing a 
lower bound on $\normH{\Delta_{t+1}}$ in terms of $\normH{\Delta_t}$, essential to control the randomness in our submartingale framework, described in later sections.
\begin{lemma}[Mean Value Theorem]{\label{MVT}}
Let $\T\x \in \mathcal{U}_f(\epsilon_0\eta)$, $\x_t \in \mathcal{U}_f(c\epsilon_0\eta)$ for $\epsilon_0 < \frac{1}{8c\sqrt{\alpha}}$. If the Hessian estimate satisfies $\hat\Hi\approx_{\sqrt\alpha}\tilde\Hi$, then
\begin{align*}
   \normH{\hat{\Hi}^{-1}\g_t}\leq 2\sqrt{\alpha}\normH{\Delta_t}.
\end{align*}
\end{lemma}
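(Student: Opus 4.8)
The name of the lemma points to the integral (mean value) form of the gradient: since $\nabla f(\x^*)=\zero$, the fundamental theorem of calculus gives
\begin{align*}
\g_t=\nabla f(\x_t)-\nabla f(\x^*)=\bar\Hi_t\Delta_t,\qquad \bar\Hi_t:=\int_0^1\nabla^2 f(\x^*+s\Delta_t)\,ds ,
\end{align*}
so $\hat\Hi^{-1}\g_t=\hat\Hi^{-1}\bar\Hi_t\Delta_t$, and it suffices to compare the matrices $\hat\Hi^{-1}$ and $\bar\Hi_t$ with $\Hi=\nabla^2 f(\x^*)$. The first step I would take is to promote the $L$-Lipschitz Hessian assumption into a \emph{multiplicative} spectral bound over the local neighborhood: for any $\x\in\mathcal{U}_f(\epsilon)$, strong convexity ($\Hi\succeq\mu\I$) gives $\|\x-\x^*\|\le\mu^{-1/2}\normH{\x-\x^*}<(\mu^{1/2}\epsilon/L)^{1/2}$, hence
\begin{align*}
\big\|\Hi^{-1/2}\big(\nabla^2 f(\x)-\Hi\big)\Hi^{-1/2}\big\|\le\tfrac{L}{\mu}\|\x-\x^*\|<\big(L\epsilon/\mu^{3/2}\big)^{1/2}=:\delta(\epsilon),
\end{align*}
i.e.\ $(1-\delta(\epsilon))\Hi\preceq\nabla^2 f(\x)\preceq(1+\delta(\epsilon))\Hi$. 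Since $\mathcal{U}_f$ carries the $\mu^{3/2}/L$ rescaling and, in the regime $\epsilon_0<\tfrac1{8c\sqrt\alpha}$, $\eta<\tfrac1{4\sqrt\alpha}$ of the analysis, its radius $c\epsilon_0\eta$ is small, the constant $\delta:=\delta(c\epsilon_0\eta)$ can be taken as small as needed, say $\delta\le\tfrac13$.

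I would then feed in the two containments. Every point $\x^*+s\Delta_t$, $s\in[0,1]$, satisfies $\|s\Delta_t\|_{\Hi}\le\normH{\Delta_t}$ and hence lies in $\mathcal{U}_f(c\epsilon_0\eta)$; averaging the pointwise bound over $s$ yields $(1-\delta)\Hi\preceq\bar\Hi_t\preceq(1+\delta)\Hi$. Likewise $\Tilde\x\in\mathcal{U}_f(\epsilon_0\eta)\subseteq\mathcal{U}_f(c\epsilon_0\eta)$ gives $\Tilde\Hi\succeq(1-\delta)\Hi$, which combined with the Hessian-oracle guarantee $\hat\Hi\approx_{\sqrt\alpha}\Tilde\Hi$ (in particular $\hat\Hi\succeq\tfrac1{\sqrt\alpha}\Tilde\Hi$) gives $\hat\Hi\succeq\tfrac{1-\delta}{\sqrt\alpha}\Hi$, equivalently $\hat\Hi^{-1}\preceq\tfrac{\sqrt\alpha}{1-\delta}\Hi^{-1}$. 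Conjugating these Loewner inequalities by $\Hi^{1/2}$ converts them into operator-norm bounds, $\|\Hi^{1/2}\hat\Hi^{-1}\Hi^{1/2}\|\le\tfrac{\sqrt\alpha}{1-\delta}$ and $\|\Hi^{-1/2}\bar\Hi_t\Hi^{-1/2}\|\le 1+\delta$, and submultiplicativity of the operator norm finishes the argument:
\begin{align*}
\normH{\hat\Hi^{-1}\g_t}=\big\|\,\Hi^{1/2}\hat\Hi^{-1}\Hi^{1/2}\cdot\Hi^{-1/2}\bar\Hi_t\Hi^{-1/2}\cdot\Hi^{1/2}\Delta_t\,\big\|\le\frac{\sqrt\alpha(1+\delta)}{1-\delta}\,\normH{\Delta_t}\le 2\sqrt\alpha\,\normH{\Delta_t},
\end{align*}
using $\tfrac{1+\delta}{1-\delta}\le 2$ for $\delta\le\tfrac13$.

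The only place that needs care is the constant bookkeeping in the first step: one must verify that the neighborhood radius $c\epsilon_0\eta$, after the $\mu^{3/2}/L$ rescaling, drives the relative Hessian perturbation $\delta$ down far enough (here $\le\tfrac13$) that the product $\tfrac{\sqrt\alpha(1+\delta)}{1-\delta}$ stays at or below $2\sqrt\alpha$. Everything else is mechanical --- the integral representation of $\g_t$, the pointwise-to-averaged $\preceq$ step for $\bar\Hi_t$, and the routine fact that $\M_1\preceq c\,\M_2$ implies $\|\M_2^{-1/2}\M_1\M_2^{-1/2}\|\le c$, which is what turns the Loewner bounds into the weighted operator-norm bounds used above. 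I do not expect any genuinely hard estimate here: this lemma is a building block, and downstream it is only needed to lower-bound $\normH{\Delta_{t+1}}$ in terms of $\normH{\Delta_t}$, for which a clean constant multiple of $\sqrt\alpha$ suffices.
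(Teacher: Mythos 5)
Your proposal follows essentially the same route as the paper's proof of Lemma~\ref{MVT}: write $\g_t=\bar\Hi\Delta_t$ via the integral mean-value form, conjugate by $\Hi^{1/2}$, use submultiplicativity to split into $\|\Hi^{1/2}\hat\Hi^{-1}\Hi^{1/2}\|$ and $\|\Hi^{-1/2}\bar\Hi\Hi^{-1/2}\|$, and bound each via the Hessian oracle and the local-neighborhood spectral control. The one place you diverge is exactly the step you flagged: the paper reads off the spectral bound directly from the definition of $\mathcal{U}_f$ via the convention $\x\in\mathcal{U}_f(\epsilon)\Rightarrow\nabla^2 f(\x)\approx_{1+\epsilon}\Hi$, so that $\|\Hi^{1/2}\hat\Hi^{-1}\Hi^{1/2}\|\le\sqrt\alpha(1+\epsilon_0\eta)$ and $\|\Hi^{-1/2}\bar\Hi\Hi^{-1/2}\|\le 1+c\epsilon_0\eta$, and the final factor $(1+\epsilon_0\eta)(1+c\epsilon_0\eta)<2$ follows purely from $\epsilon_0<\frac{1}{8c\sqrt\alpha}$, $\eta<\frac{1}{4\sqrt\alpha}$ with no extraneous $L,\mu$ dependence. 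By re-deriving a $\delta$ from the Lipschitz bound on the raw (squared) radius in Definition~\ref{d:neighborhood} you get $\delta=(L\,c\epsilon_0\eta/\mu^{3/2})^{1/2}$, and the assertion that this is $\le\frac13$ does not follow from the stated hypotheses on $\epsilon_0,\eta$ alone — it would require an additional absolute smallness assumption on the neighborhood in terms of $L/\mu^{3/2}$. The paper's $\mu^{3/2}/L$ normalization is calibrated precisely so that the neighborhood parameter $\epsilon$ itself controls the multiplicative Hessian perturbation; leaning on that calibration, as the paper does, closes the gap you identified and is the cleaner way to finish.
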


\begin{proof}
    Refer to Appendix \ref{proof_MVT}.
\end{proof}

\subsection{One step expectation result}\label{sec.4.2}

In the following theorem, we use the auxiliary lemmas (Lemma~\ref{lma1} and Lemma~\ref{lma2}) to show that for a sufficiently small step size $\eta$, in expectation, \texttt{Mb-SVRN} is similar to the approximate Newton step (Lemma \ref{lma2}), with an additional small error term that can be controlled by the step size $\eta$. We note that while this result illustrates the convergence behavior of \texttt{Mb-SVRN}, it is by itself not sufficient to guarantee  
overall convergence  
(either in expectation or with high probability), because it does not ensure that the iterates will remain  
in the local neighborhood $\mathcal{U}_f$ throughout the algorithm (this is addressed with our submartingale framework, Section~\ref{High_probability_analysis}).
\begin{theorem}[\textbf{One inner iteration conditional expectation result}]{\label{Exp1}}
Let $\epsilon_0 < \frac{1}{8c\sqrt{\alpha}}$, $\T\x \in \mathcal{U}_f(\epsilon_0\eta)$, $ \x_t \in \mathcal{U}_f(c\epsilon_0\eta)$.
Consider \texttt{Mb-SVRN} with step size $0<\eta<\min\{\frac{1}{4\sqrt{\alpha}},\frac{b}{48\alpha^{3/2}\kappa}\}$ and gradient mini-batch size $b$. Then:
\begin{align*}
   \E_{t}[\normHsq{\Delta_{t+1}}
] \leq \left(1-\frac{\eta}{8\sqrt{\alpha}}\right)\normHsq{\Delta_{t}} + 3\eta^2\frac{\alpha\kappa}{b}\normHsq{\T\Delta}.
\end{align*}

\end{theorem}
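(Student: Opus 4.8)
The plan is to decompose the one-step update $\Delta_{t+1} = \x_{t+1}-\x^*$ into the ``approximate Newton'' part, which uses the exact gradient $\g_t$, plus a perturbation coming from the variance-reduced stochastic gradient noise $\bar\g_t - \g_t$. Concretely, write $\x_{t+1} = \x_t - \eta\hat\Hi^{-1}\bar\g_t = \x_{\textnormal{AN}} - \eta\hat\Hi^{-1}(\bar\g_t - \g_t)$, where $\x_{\textnormal{AN}} = \x_t - \eta\hat\Hi^{-1}\g_t$ is exactly the iterate analyzed in Lemma~\ref{lma2}. Expanding the squared $\Hi$-norm gives
\begin{align*}
\normHsq{\Delta_{t+1}} = \normHsq{\x_{\textnormal{AN}}-\x^*} - 2\eta\,(\x_{\textnormal{AN}}-\x^*)^\top\Hi\hat\Hi^{-1}(\bar\g_t-\g_t) + \eta^2\normHsq{\hat\Hi^{-1}(\bar\g_t-\g_t)}.
\end{align*}
Now take $\E_t[\cdot]$. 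Since $\bar\g_t$ is an unbiased estimator of $\g_t$ (the mini-batch indices are drawn uniformly and independently, and $\bar\g_t = \hat\g_t - \hat{\T\g} + \T\g$), the cross term vanishes: $\E_t[\bar\g_t - \g_t] = 0$, and everything else in that inner product is $\E_t$-measurable. This is the key reason we want the expectation statement before moving to high probability.

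Next I would bound the two surviving terms. For the first, Lemma~\ref{lma2} applies directly (its hypotheses $\epsilon_0 < \tfrac{1}{8c\sqrt\alpha}$, $\eta < \tfrac{1}{4\sqrt\alpha}$, and the neighborhood memberships are all assumed here), giving $\normHsq{\x_{\textnormal{AN}}-\x^*} \leq (1-\tfrac{\eta}{8\sqrt\alpha})^2\normHsq{\Delta_t} \leq (1-\tfrac{\eta}{8\sqrt\alpha})\normHsq{\Delta_t}$. For the second term, I would pass from the $\hat\Hi^{-1}$-weighted norm to the $\Hi^{-1}$-weighted norm using $\hat\Hi \approx_{\sqrt\alpha}\T\Hi$ together with a comparison of $\T\Hi$ and $\Hi$ on the neighborhood (Hessian Lipschitzness, as encoded in the definition of $\mathcal{U}_f$), so that $\normHsq{\hat\Hi^{-1}\v} \lesssim \alpha\normHinvsq{\v}$ up to a $(1+O(\epsilon_0\eta))$ factor. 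Then $\E_t\normHinvsq{\bar\g_t - \g_t} = \E_t\big[(\bar\g_t-\g_t)^\top\Hi^{-1}(\bar\g_t-\g_t)\big] \leq \mu^{-1}\E_t\nsq{\bar\g_t-\g_t}$, and Lemma~\ref{lma1} bounds $\E_t\nsq{\bar\g_t-\g_t} \leq (1+c\epsilon_0\eta)\tfrac{\lambda}{b}\nsq{\T\x - \x_t}_\Hi$. Finally $\nsq{\T\x-\x_t}_\Hi \leq 2\normHsq{\T\Delta} + 2\normHsq{\Delta_t}$ by the triangle inequality. Collecting constants, the $\eta^2$ coefficient on $\normHsq{\Delta_t}$ is $O(\eta^2\alpha\kappa/b)$, which is dominated by the $\tfrac{\eta}{8\sqrt\alpha}$ contraction precisely when $\eta \lesssim \tfrac{b}{\alpha^{3/2}\kappa}$ — this is where the second step-size restriction $\eta < \tfrac{b}{48\alpha^{3/2}\kappa}$ enters — so it can be absorbed, leaving the clean statement with the $3\eta^2\tfrac{\alpha\kappa}{b}\normHsq{\T\Delta}$ error term.

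The main obstacle is tracking the interlocking constants and the norm conversions cleanly: there are three different PSD matrices in play ($\hat\Hi$, $\T\Hi$, $\Hi$), and each conversion costs either a $\sqrt\alpha$ factor or a $(1+O(\epsilon_0\eta))$ factor from Hessian Lipschitzness on $\mathcal{U}_f$. One has to be careful that the ``$\x_t$ stays in $\mathcal{U}_f(c\epsilon_0\eta)$'' hypothesis is used consistently (it is what licenses the $\T\Hi \approx \Hi$ comparison and the Lemma~\ref{lma1}/Lemma~\ref{lma2} invocations), and that the final absorption of the $\eta^2\normHsq{\Delta_t}$ term into the linear contraction leaves enough slack — a factor like $\tfrac{1}{48}$ rather than a tight constant — so that the stated inequality holds with the round numbers $(1-\tfrac{\eta}{8\sqrt\alpha})$ and $3\eta^2\tfrac{\alpha\kappa}{b}$. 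No genuinely new idea is needed beyond the bias-free decomposition; it is bookkeeping of the variance-reduction and approximate-Newton estimates already established.
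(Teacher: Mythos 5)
Your proposal follows essentially the same route as the paper: decompose the step into the approximate-Newton part plus the stochastic perturbation, use unbiasedness to kill the cross term in $\E_t$, invoke Lemma~\ref{lma2} for the deterministic contraction, convert the $\hat\Hi^{-1}$-weighted noise through the $\hat\Hi\approx_{\sqrt\alpha(1+\epsilon_0\eta)}\Hi$ equivalence, apply Lemma~\ref{lma1}, split $\normHsq{\T\x-\x_t}$ by the triangle inequality, and finally absorb the resulting $\eta^2\tfrac{\alpha\kappa}{b}\normHsq{\Delta_t}$ term using the restriction $\eta<\tfrac{b}{48\alpha^{3/2}\kappa}$. The norm-conversion bookkeeping (passing through $\normHinvsq{\cdot}$) is a cosmetic variant of the paper's $\hat\Hi^{-1/2}\hat\Hi^{-1/2}$ split and gives the same $\tfrac{\alpha}{\mu}(1+O(\epsilon_0\eta))$ factor.

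One small but real imprecision: when you write $\normHsq{\x_{\textnormal{AN}}-\x^*} \le (1-\tfrac{\eta}{8\sqrt\alpha})^2\normHsq{\Delta_t} \le (1-\tfrac{\eta}{8\sqrt\alpha})\normHsq{\Delta_t}$ and then later plan to absorb the extra $3\eta^2\tfrac{\alpha\kappa}{b}\normHsq{\Delta_t}$ term, you cannot do both. If you discard the square at that stage, the coefficient on $\normHsq{\Delta_t}$ becomes $(1-\tfrac{\eta}{8\sqrt\alpha}) + 3\eta^2\tfrac{\alpha\kappa}{b}$, which strictly exceeds $1-\tfrac{\eta}{8\sqrt\alpha}$, so the stated inequality fails. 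The paper keeps the square, expands $(1-\tfrac{\eta}{8\sqrt\alpha})^2 = 1 - \tfrac{\eta}{4\sqrt\alpha} + \tfrac{\eta^2}{64\alpha}$, and uses the extra $-\tfrac{\eta}{8\sqrt\alpha}$ slack (plus the step-size restriction, which forces $3\eta^2\tfrac{\alpha\kappa}{b} < \tfrac{\eta}{16\sqrt\alpha}$) to arrive back at $<1-\tfrac{\eta}{8\sqrt\alpha}$. Retain the squared contraction until after the absorption and the argument closes exactly as you intended.
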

\begin{proof}
     Taking the conditional expectation $\E_t$ of 
     $\normHsq{\Delta_{t+1}}$,
\begin{align*}
\E_t[\normHsq{\Delta_{t+1}}] &= \E_t\left[\normHsq{\x_t - \eta\hat{\Hi}^{-1}\bar{\g}_t-\x^*}\right]\\
    &\leq\E_t\left[\normHsq{\Delta_t-\eta\hat\Hi^{-1}\g_t} \right] + \eta^2\cdot \E_t\left[\normHsq{\hat\Hi^{-1}(\g_t-\bar{\g}_t)}\right].
\end{align*}
This is because $\E_t[\bar{\g}_t-\g_t]=0$ and therefore the cross term vanishes.
Since  we know that $\x_t \in \mathcal{U}_f(c\epsilon_0\eta)$ and $\epsilon_0 < \frac{1}{8c\sqrt{\alpha}}$, by Lemma \ref{lma2} we have $\normH{\Delta_t- \eta\hat\Hi^{-1}\g_t} \leq (1-\frac{\eta}{8\sqrt{\alpha}})\normH{\Delta_t}$. Substituting this in the previous inequality, it follows that
\begin{align*}
    \E_t[\normHsq{\Delta_{t+1}}] &\leq \left(1-\frac{\eta}{8\sqrt{\alpha}}\right)^2\E_t\left[\normHsq{\Delta_t}\right] +  \eta^2\cdot \E_t\left[\normHsq{\hat\Hi^{-1}(\g_t-\bar{\g}_t)}\right]\\
     & \leq \left(1-\frac{\eta}{8\sqrt{\alpha}}\right)^2\normHsq{\Delta_t} + \eta^2 \nsq{\Hi^{1/2}\hat{\Hi}^{-1/2}}\cdot\E_t\left[\nsq{\hat{\Hi}^{-1/2}(\g_t-\bar{\g}_t)}\right].
\end{align*}
 Using that $\hat{\Hi} \approx_{\sqrt{\alpha}} \T\Hi$ and $\T\Hi\approx_{ (1+\epsilon_0\eta)}\Hi$, we have $\hat{\Hi} \approx_{\sqrt{\alpha}(1+\epsilon_0\eta)}\Hi$. Therefore, $\nsq{\Hi^{1/2}\hat{\Hi}^{-1/2}} = \norm{\Hi^{1/2}\hat{\Hi}^{-1}\Hi^{1/2}} \leq \sqrt{\alpha}(1+\epsilon_0\eta)$. So we get,
\begin{align*}
   \E_t[\normHsq{\Delta_{t+1}}] &\leq \left(1-\frac{\eta}{8\sqrt{\alpha}}\right)^2\normHsq{\Delta_t} + \eta^2 \sqrt{\alpha}(1+\epsilon_0\eta) \cdot \E_t\left[\nsq{\hat{\Hi}^{-1/2}(\g_t-\bar{\g}_t)}\right].
\end{align*}
Upper bounding $\nsq{\hat{\Hi}^{-1/2}} = \norm{\hat{\Hi}^{-1}} \leq \frac{\sqrt\alpha}{\mu}$,
\begin{align*}
    \E_t[\normHsq{\Delta_{t+1}}] &\leq \left(1-\frac{\eta}{8\sqrt{\alpha}}\right)^2\normHsq{\Delta_t} + \eta^2 \frac{\alpha}{\mu}(1+\epsilon_0\eta) \cdot \E_t[\nsq{(\g_t-\bar{\g}_t)}].
\end{align*}
By 
Lemma \ref{lma1}, we bound 
the last term of the previous inequality,
\begin{align*}
    \E_t\left[\nsq{\g_t-\bar{\g}_t}\right] &\leq \frac{(1+c\epsilon_0\eta)\lambda}{b}\normHsq{\Delta_t-\T\Delta}\\
    &\leq \frac{2(1+c\epsilon_0\eta)\lambda}{b}\left(\normHsq{\Delta_t}+\normHsq{\T\Delta}\right).
\end{align*}
Putting it all together, 
\begin{align}
   \E_t[\normHsq{\Delta_{t+1}}] &\leq \left[\left(1-\frac{\eta}{8\sqrt{\alpha}}\right)^2 + 3\eta^2\alpha\frac{\kappa}{b}\right]\normHsq{\Delta_t} + 3\eta^2 \frac{\alpha \kappa}{b} \cdot \normHsq{\T\Delta} \nonumber \\
   & = \left[1- \frac{\eta}{4\sqrt{\alpha}} + \frac{\eta^2}{64\alpha} + 3\eta^2 \frac{\alpha \kappa}{b} \right]\normHsq{\Delta_t} + 3\eta^2 \frac{\alpha \kappa}{b} \cdot \normHsq{\T\Delta} \nonumber\\
   & < \left(1- \frac{\eta}{8\sqrt{\alpha}}\right)\normHsq{\Delta_t} + 3\eta^2 \frac{\alpha \kappa}{b} \cdot \normHsq{\T\Delta},\label{exp_1}
\end{align}
where we used that $(1+\epsilon_0\eta) <1+\frac{1}{8}$ and $\eta < \frac{b}{48\alpha^{3/2}\kappa}$ imply  
$3\eta^2\frac{\alpha\kappa}{b} < \frac{\eta}{16\sqrt{\alpha}}$.
\ifdocenter
\else
\qed
\fi
\end{proof}
In the remainder of the  
analysis, we set $c=e^2$ and consider $\eta=\frac{b\sqrt{\alpha}\beta}{n}$ for some $\beta >1$. These specific values for $c$ and $\eta$ are set in hindsight based on the optimal step size and the neighborhood scaling factor $(c)$ derived later as artifacts of our analysis. On the other hand, one can do the analysis with general $c$ and $\eta$ and later derive these assignments for $\eta$ and $c$. The value for $\beta$ also gets specified as the analysis proceeds.

\subsection{Building blocks for submartingale framework}{\label{High_probability_analysis}}

We begin by building an intuitive understanding of our submartingale framework, where we define a random process $Y_t$ to describe the convergence behavior of the algorithm. Informally, one  
can think of $Y_t$ as representing the error $\normHsq{\Delta_t}$, and aiming to establish the submartingale property $\E_t[Y_{t+1}] < Y_t$. We next highlight two problematic scenarios with achieving the submartingale property, which are illustrated in Figure \ref{fig:submartingale}. 

First, the assumption $\x_t \in \mathcal{U}_f(c\epsilon_0\eta)$ in Theorem \ref{Exp1} ensures that the approximate Newton step reduces the error at least by a factor of $(1-\frac{\eta}{8\sqrt\alpha})$. However, if  $\x_t \not \in \mathcal{U}_f(c\epsilon_0\eta)$, this claim is no longer valid, leading to the first scenario that disrupts the submartingale behavior. This implies the necessity of the property $\x_t \in \mathcal{U}_f(c\epsilon_0\eta)$, for some constant $c$, to establish  
$\E_t[Y_{t+1}] < Y_t$. In the course of our martingale concentration analysis, we show the existence of an absolute constant $c>0$, such that $\x_t \in \mathcal{U}_f(c\epsilon_0\eta)$, for all $t$, with high~probability.

We now explain the second scenario that disrupts the submartingale property. Note that, $Y_{t+1}$ involves noise arising due to the stochasticity of the variance-reduced gradient $\bar{\g}_t$. This stochasticity leads to the term $3\eta^2\frac{\alpha\kappa}{b}\normHsq{\T\Delta}$ in the statement of Theorem \ref{Exp1}. Now, in the event that this noise dominates the error reduction due to the approximate Newton step, one cannot establish that $\E_t[Y_{t+1}] \leq Y_t$. This problematic scenario occurs only when $\normHsq{\Delta_t} \lesssim \frac{\kappa\alpha^2}{n}\normHsq{\T\Delta}$, i.e., when the error becomes small enough to recover our main convergence guarantee. As the algorithm cannot automatically detect this scenario and restart a new outer iteration, it still performs the remaining inner iterations. This results in subsequent inner iterates being affected by high stochastic gradient noise, disrupting the submartingale~property. 
 
To address these scenarios, we introduce random stopping times $T_i$ and random resume times $L_i$, where stopping times capture the disruptive property scenarios, and resume times capture property restoration after encountering stopping times. Refer to Figure \ref{fig:submartingale} for a visual representation, where the dashed curve denotes the error $\normHsq{\Delta_t}$, the $x$-axis denotes the iteration index $t$, \mycirc[red] denotes the stopping time, and \mycirc[teal] denotes the resume time. The submartingale property holds as long as $\normHsq{\Delta_t}$ stays between the brown and blue lines (i.e., in the white strip between the grey
). The brown line at the top denotes the scenario where  
$\x_t \not \in \mathcal{U}_f(c\epsilon_0\eta)$, and the blue line at the bottom denotes the scenario where  
$\normHsq{\Delta_t} < \frac{\kappa\alpha^2}{n}\normHsq{\T\Delta}$. The green line denotes the convergence guarantee provided  
in our work, which remarkably is a constant multiple of the blue line. Our martingale concentration argument shows that after $n/b$ iterations, the error will, with high probability, fall below the green line.

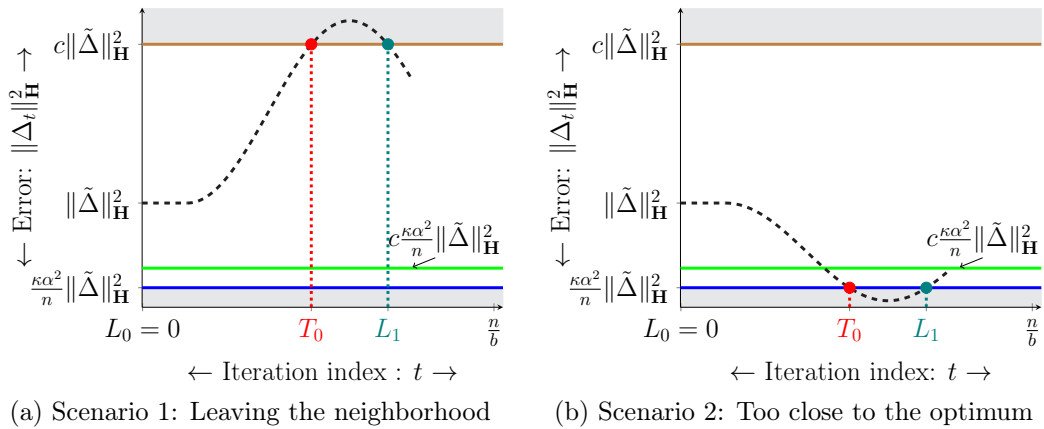
\begin{figure}
\ifdocenter
\centering
\fi
\begin{tabular}{cc}
\hspace{-7mm}\subfloat[Scenario 1: Leaving the neighborhood] { \hspace{\toyshift}\begin{tikzpicture}[scale=0.7,font=\normalsize]
    \draw [black!10,fill=black!10] (0,5) rectangle (6.85,5.675);
    \draw [black!10,fill=black!10] (0,0) rectangle (6.85,0.35);
        \begin{axis}[
            xmin=0,xmax=80,
            xlabel={z},
            ymin=0,ymax=2300,
        xtick={0.1,37.5,54.5,78},
        xticklabels={\large $L_0=0$, \large \Red{$T_0$}, \large \textcolor{teal}{$L_1$}, \large $\frac{n}{b}$},
        ytick={2019,800,150},
        yticklabels={\large $c\|\tilde\Delta\|_{\Hi}^2$, \large $\|\tilde\Delta\|_{\Hi}^2$, \large $\hspace{-5mm}\frac{\kappa\alpha^2}{n}\|\tilde\Delta\|_{\Hi}^2$},
        xlabel={\large $\leftarrow$ Iteration index : $t$ $\rightarrow$},  
        ylabel={\large $\leftarrow$ Error: $\|\Delta_t\|_{\Hi}^2$ $\rightarrow$},
        axis lines=left] 
        \addplot[ultra thick, domain =0 : 80, samples =100, color=brown]{2019};
        \addplot[ultra thick, domain =0 : 80, samples =100, color=green]{300};
        \addplot[ultra thick, domain =0 : 80, samples =100, color=blue]{150};
        \addplot[ultra thick, dashed, domain =0 : 10, samples =100, color=black]{800};
        \addplot[ultra thick, dashed, domain =10 : 60, samples =100,
        color=black]{1500 + 700*sin(5*(x-10)-90)};
        \draw [dotted,ultra thick,color= red] (37.5,0) -- (37.5,2019);
         \node at (67,530) {\large $c\frac{\kappa\alpha^2}{n}\|\T\Delta\|_{\Hi}^2$};
        \draw[->](65,380)   -- (60,310);
        \draw [dotted,ultra thick,color= teal] (54.5,0) -- (54.5,2019);
        \addplot[red,mark=*,mark size=3pt] coordinates {(37.5,2019)} node{};
        \addplot[teal,mark=*,mark size=3pt] coordinates {(54.5,2019)} node{};
    \end{axis}
    \end{tikzpicture}}&
\subfloat[Scenario 2: Too close to the optimum]{\begin{tikzpicture}[scale=0.7,font=\normalsize]
    \draw [black!10,fill=black!10] (0,5) rectangle (6.85,5.675);
    \draw [black!10,fill=black!10] (0,0) rectangle (6.85,0.35);
        \begin{axis}[
            xmin=0,xmax=80,
            xlabel={z},
            ymin=0,ymax=2300,
        xtick={0.1,37.5,54.5,78},
        xticklabels={\large $L_0=0$, \large \Red{$T_0$}, \large \textcolor{teal}{$L_1$}, \large $\frac{n}{b}$},
        ytick={2019,800,150},
        yticklabels={\large $c\|\tilde\Delta\|_{\Hi}^2$, \large $\|\tilde\Delta\|_{\Hi}^2$, \large $\hspace{-5mm}\frac{\kappa\alpha^2}{n}\|\tilde\Delta\|_{\Hi}^2$},
        xlabel={\large $\leftarrow$ Iteration index: $t$ $\rightarrow$},  
        ylabel={\large $\leftarrow$ Error: $\|\Delta_t\|_{\Hi}^2$ $\rightarrow$},
        axis lines=left] 
        \addplot[ultra thick, domain =0 : 80, samples =100, color=brown]{2019};
        \addplot[ultra thick, domain =0 : 80, samples =100, color=green]{300};
        \addplot[ultra thick, domain =0 : 80, samples =100, color=blue]{150};
        \node at (67,530) {\large $c\frac{\kappa\alpha^2}{n}\|\T\Delta\|_{\Hi}^2$};
        \draw[->](67,380)   -- (62,310);
        \addplot[ultra thick, dashed, domain =0 : 10, samples =100, color=black]{800};
        \addplot[ultra thick, dashed, domain =10 : 59, samples =100,
        color=black]{425 + 375*sin(5*(x-10)+90)};
        \draw [dotted,ultra thick,color= red] (37.5,0) -- (37.5,150);
        \draw [dotted,ultra thick,color= teal] (54.5,0) -- (54.5,150);
        \addplot[red,mark=*,mark size=3pt] coordinates {(37.5,150)} node{};
        \addplot[teal,mark=*,mark size=3pt] coordinates {(54.5,150)} node{};
    \end{axis}
    \end{tikzpicture}}
\end{tabular}
\caption{Visual illustration of two problematic scenarios disrupting the submartingale behavior of $\normHsq{\Delta_t}$. The green line denotes the convergence guarantee we prove in our work (Theorem \ref{final_result}). In the left plot, the red dot (\mycirc[red]) represents the first stopping time obtained due to failure of the local neighborhood condition, plotted via the brown line near the top. In the right plot, the red dot (\mycirc[red]) represents the first stopping time obtained due the iterate $\x_t$ lying too close to $\x^*$, plotted via the blue line near the bottom. 
The teal colored dots (\mycirc[teal]) denote the resume time representing the restoration of submartingale~property.}\label{fig:submartingale}
\end{figure}

We proceed to formally define the random stopping and resume times. Consider a fixed $\gamma>0$. Define random stopping times $T_i$ and random resume times $L_i$, with $L_0=0$ and for $i\geq 0$, as
\begin{align}
    T_{i} &= \min\left(\left\{t>L_i \ | \ \normHsq{\Delta_t} > e^2\normHsq{\T\Delta} \ \text{or} \ \normHsq{\Delta_t} < \frac{\kappa\alpha^2\gamma}{n}\normHsq{\T\Delta}\right\}\cup \left\{\frac{n}{b}\right\}\right), \nonumber\\
      L_{i+1} &=\min\left(\left\{t>T_{i} \ | \ \normHsq{\Delta_t} \leq e^2\normHsq{\T\Delta}\ \text{and} \ \normHsq{\Delta_t} \geq \frac{\kappa\alpha^2\gamma}{n}\normHsq{\T\Delta}\right\}\cup \left\{\frac{n}{b}\right\}\right) \label{random_times}.
\end{align}
The random stopping time $T_i$ denotes the $(i+1)^{th}$ iteration index when the random sequence $(\x_t)$ leaves the local neighborhood $\mathcal{U}_f(e^2\epsilon_0\eta)$ or satisfies $\normHsq{\Delta_t} < \frac{\kappa\alpha^2\gamma}{n}\normHsq{\T\Delta}$. The random resume time $L_i$ denotes the $(i+1)^{th}$ iteration index when the random sequence $(\x_t)$ returns back to the local neighborhood and also satisfies $\normHsq{\Delta_t} \geq \frac{\kappa\alpha^2\gamma}{n}\normHsq{\T\Delta}$. The condition $\normHsq{\Delta_t} \leq e^2\normHsq{\T\Delta}$, ensures that the previous results (Lemma \ref{lma1}, \ref{lma2}, \ref{lma3}) hold with $c=e^2$, whereas the condition $\normHsq{\Delta_t} \geq \frac{\kappa\alpha^2\gamma}{n}\normHsq{\T\Delta}$, captures the fact that the optimal convergence rate has not been achieved at $\x_t$.  These two different conditions are required to design a submartingale from random time $L_i$ to $T_i$. Note that, in hindsight, we have defined stopping time in a way similar to our convergence guarantee, $\normHsq{\Delta_\frac{n}{b}} \lesssim \frac{\kappa\alpha^2}{n}\normHsq{\T\Delta}$. We add a factor $\gamma$ to capture constants and log factors that can appear in the convergence guarantee. One can show that for $T_i \leq t<L_{i+1}$, the behavior of $\normHsq{\Delta_t}$ is dictated by the variance in the stochastic gradient and not the  
progress made by the approximate Newton~step. 
\begin{remark}
The stopping and resume times satisfy the following properties:
\begin{enumerate}[(a)]
\item  For $L_0 \leq t <T_0$, we have $\normHsq{\Delta_t} \leq e^2 \normHsq{\T\Delta}$. This is due to the definitions of $L_0$ and $T_0$ in \eqref{random_times}. Equivalently, we can write,
\begin{align}
\Pr\left(\normHsq{\Delta_t} \leq e^2 \normHsq{\T\Delta} \ | \ L_0 \leq t <T_0 \right) =1. \label{first_stop_time}
\end{align}

    \item For $T_i \leq t < L_{i+1}$ and given that $\normHsq{\Delta_{T_i}} < \frac{\kappa\alpha^2\gamma}{n} \normHsq{\T\Delta} $, we have $\normHsq{\Delta_t} < \frac{\kappa\alpha^2\gamma}{n} \normHsq{\T\Delta}$. This is because $L_{i+1}$ is the first instance after $T_i$ such that $\frac{\kappa\alpha^2\gamma}{n}\normHsq{\T\Delta} \leq \normHsq{\Delta_{L_i}} \leq e^2\normHsq{\T\Delta}$. Equivalently, we can write,
    \begin{align}
        \Pr\left(\normHsq{\Delta_t} < \frac{\kappa\alpha^2\gamma}{n} \normHsq{\T\Delta} \ | \ T_i \leq t < L_{i+1}, \normHsq{\Delta_{T_i}} < \frac{\kappa\alpha^2\gamma}{n} \normHsq{\T\Delta}  \right) =1 \label{from_T_to_L}.
    \end{align}

     \item For any gradient mini-batch size $1\leq b\leq n$, $\texttt{Mb-SVRN}$ performs $n/b$ inner iterations, and therefore by construction random times $T_i$ and $L_i$ would not be realized more than $n/b$ times.
\end{enumerate}
\end{remark}
Note that due to \eqref{from_T_to_L}, $\texttt{Mb-SVRN}$ provides a very strong guarantee on the error of the iterates $\x_t$ when $T_i \leq t < L_{i+1}$ and $\normHsq{\Delta_{T_i}} < \frac{\kappa\alpha^2\gamma}{n} \normHsq{\T\Delta}$. However, the stopping time $T_i$ can be realized due to $\normHsq{\Delta_{T_i}} > e^2\normHsq{\T\Delta}$ or $T_i =n/b$, and in that case the conditioning event in \eqref{from_T_to_L} may  
never hold. 

Now we proceed to construct a submartingale framework from the random resume time $L_i$ to the random stopping time $T_i$. Our aim is to apply Freedman's inequality on a carefully constructed martingale and prove strong concentration guarantees on $\normH{\Delta_{T_i}}$, resulting in our main result. We state a version of Freedman's inequality for submartingales (which is a minor modification of standard Freedman's for martingales \cite{tropp2011freedman}, see Appendix \ref{Freedman_proof}).

\begin{theorem}[Freedman's inequality for submartingales]{\label{Freedman}}
For a random process $Y_t$ satisfying $\E_t\left[Y_{t+1}\right] \leq Y_t$ and $|Y_{t+1}-Y_t| \leq R$, it follows that:

\begin{align*}
    \Pr\left(\exists t \ | \ Y_t > Y_0 + \lambda \ and \ \sum_{j=0}^{t-1}\E_j\left(Y_{j+1}-Y_j\right)^2 \leq \sigma^2 \ \right) \leq \exp\left(-\frac{1}{4}\min\left\{\frac{\lambda^2}{\sigma^2}, \frac{\lambda}{R}\right\}\right).
\end{align*}
\end{theorem}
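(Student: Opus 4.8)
The plan is to run the standard exponential-supermartingale argument behind Freedman's inequality \cite{tropp2011freedman}, isolating the single place where the martingale hypothesis $\E_t[Y_{t+1}]=Y_t$ is used and observing that the weaker submartingale hypothesis $\E_t[Y_{t+1}]\le Y_t$ only helps there. Write $D_t=Y_{t+1}-Y_t$, so that $\E_t[D_t]\le 0$ and $|D_t|\le R$. The first step is a one-step moment-generating bound: using that $x\mapsto(e^x-1-x)/x^2$ is nondecreasing, for every $\theta\in(0,3/R)$ and every $y\le R$ one has $e^{\theta y}\le 1+\theta y+\theta^2y^2\cdot\tfrac{e^{\theta R}-1-\theta R}{(\theta R)^2}$; applying this with $y=D_t$, taking $\E_t$, \emph{dropping} the now-nonpositive term $\theta\,\E_t[D_t]$, and using the elementary estimate $\tfrac{e^u-1-u}{u^2}\le\tfrac{1}{2(1-u/3)}$ for $0\le u<3$, I get
\[
\E_t\big[e^{\theta D_t}\big]\ \le\ \exp\!\Big(\tfrac{\theta^2}{2(1-\theta R/3)}\,\E_t[D_t^2]\Big).
\]
This single inequality is the entire modification relative to the martingale version: there $\theta\,\E_t[D_t]=0$, whereas here it is $\le 0$ and simply discarded.

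Next I would build the exponential supermartingale. Let $V_t=\sum_{j=0}^{t-1}\E_j[D_j^2]$, which is predictable and nondecreasing, and set $W_t=\exp\!\big(\theta(Y_t-Y_0)-\tfrac{\theta^2}{2(1-\theta R/3)}\,V_t\big)$. Since $V_{t+1}-V_t=\E_t[D_t^2]$ is determined by the first $t$ steps, pulling it out of $\E_t[W_{t+1}]$ and applying the one-step bound gives $\E_t[W_{t+1}]\le W_t$; together with $W_t\ge 0$ and $W_0=1$ this makes $(W_t)$ a nonnegative supermartingale. Then Ville's maximal inequality yields $\Pr(\sup_t W_t\ge a)\le 1/a$ for all $a>0$ (in our setting $t$ ranges over the finite set $\{0,\dots,n/b\}$, so this is elementary; an infinite horizon would be handled by truncating at $t\wedge m$ and letting $m\to\infty$). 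On the event in the statement, the witnessing index $t$ has $Y_t-Y_0>\lambda$ and $V_t\le\sigma^2$, so $\theta(Y_t-Y_0)-\tfrac{\theta^2}{2(1-\theta R/3)}V_t>\theta\lambda-\tfrac{\theta^2\sigma^2}{2(1-\theta R/3)}$; hence that event is contained in $\{\sup_t W_t>\exp(\theta\lambda-\tfrac{\theta^2\sigma^2}{2(1-\theta R/3)})\}$ and has probability at most $\exp\!\big(-\theta\lambda+\tfrac{\theta^2\sigma^2}{2(1-\theta R/3)}\big)$.

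To finish, I would optimize over $\theta$: the choice $\theta=\lambda/(\sigma^2+R\lambda/3)$, which lies in $(0,3/R)$, makes $1-\theta R/3=\sigma^2/(\sigma^2+R\lambda/3)$ and collapses the exponent to $-\tfrac{\lambda^2/2}{\sigma^2+R\lambda/3}$, the usual Bernstein form. The stated conclusion then follows from the crude simplification $\sigma^2+R\lambda/3\le 2\max\{\sigma^2,R\lambda/3\}$, which gives $\tfrac{\lambda^2/2}{\sigma^2+R\lambda/3}\ge\tfrac14\min\{\lambda^2/\sigma^2,\,3\lambda/R\}\ge\tfrac14\min\{\lambda^2/\sigma^2,\,\lambda/R\}$.

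I expect the only genuinely delicate point to be the uniform-in-time ("$\exists\,t$") form of the claim rather than any individual inequality: this is precisely what rules out a one-shot Chernoff/Bernstein argument and forces the detour through the supermartingale $W_t$ together with a maximal inequality, and it is where care is needed that the supremum (or a stopped version of $W_t$) is properly controlled. Since \texttt{Mb-SVRN} performs only $n/b$ inner iterations, this reduces to a finite-horizon maximal inequality and is routine; everything else is the textbook Freedman computation with the one sign change isolated in the first step.
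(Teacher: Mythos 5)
Your proof is correct, and it follows the same structural route as the paper's (Appendix B.5): localize the submartingale relaxation in the one-step MGF bound, where the term $\theta\,\E_t[D_t]\le 0$ can simply be dropped, and then feed the resulting exponential-moment control into a supermartingale/Ville argument and optimize over $\theta$. Where you differ is in packaging. The paper cites Tropp's ``Master tail bound for adapted sequences'' (Theorem \ref{master_tail_bound}) as a black box, so it never needs to exhibit the supermartingale, and it works with the exact Bennett envelope $g(\theta)=(e^{\theta R}-\theta R-1)/R^2$ from Lemma \ref{Freedman_mgf}. You instead construct $W_t=\exp\bigl(\theta(Y_t-Y_0)-\tfrac{\theta^2}{2(1-\theta R/3)}\,V_t\bigr)$ explicitly, check $\E_t[W_{t+1}]\le W_t$, and apply Ville's maximal inequality directly; and you trade the exact $g(\theta)$ for the quadratic Bernstein envelope $g(\theta)\le\theta^2/(2(1-\theta R/3))$. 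The two packagings are equivalent, but the Bernstein envelope makes the closing calculus genuinely cleaner: the optimizer $\theta=\lambda/(\sigma^2+R\lambda/3)$ and the exponent $-\tfrac{\lambda^2/2}{\sigma^2+R\lambda/3}$ drop out immediately, and the stated bound follows from the one-line estimate $\sigma^2+R\lambda/3\le 2\max\{\sigma^2,R\lambda/3\}$, whereas the paper's optimization of the exact $g(\theta)$ requires more care to collapse into the displayed $\min\{\lambda^2/\sigma^2,\lambda/R\}$ form. Both routes use only the one-sided bound $D_t\le R$, so nothing is lost relative to the two-sided hypothesis in the statement, and your remark that the ``$\exists\,t$'' quantifier is what forces the supermartingale detour (rather than a one-shot Bernstein bound) is exactly the right thing to flag.
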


As required for the application of Freedman's inequality on any random process, we need to establish the following three properties:
\begin{enumerate}[(a)]
\item the submartingale property, i.e., $\E_t[Y_{t+1}]\leq Y_t$;
\item the predictable quadratic variation bound, i.e., a bound on $\E_t\,(Y_{t+1}-Y_t)^2$;
\item and the almost sure upper bound, i.e., a bound on $|Y_{t+1}-Y_t|$.
\end{enumerate}

The next lemma shows that if $\x_t$ lies in the local neighborhood $ \mathcal{U}_f(e^2\epsilon_0\eta)$, and does not satisfy $\normHsq{\Delta_t} < \frac{\kappa\alpha^2\gamma}{n}\normHsq{\T\Delta}$, then we have a submartingale property ensuring that in expectation $\normH{\Delta_{t+1}}$ is smaller than $\normH{\Delta_t}$. 
\begin{lemma}[Submartingale property till stopping time]{\label{submartingale_prop}}
Let the gradient mini-batch size be $1 \leq b \leq n$ and $\epsilon_0 < \frac{1}{8e^2\sqrt{\alpha}}$. 
Let $\T\x \in \mathcal{U}_f(\epsilon_0\eta)$, $\x_t \in \mathcal{U}_f(e^2\epsilon_0\eta)$
, $\eta = \frac{b\sqrt{\alpha}\beta}{n} < \min\{\frac{1}{4\sqrt{\alpha}},\frac{b}{48\alpha^{3/2}\kappa}\}$, $\normHsq{\T\Delta} \leq \frac{n}{\kappa\alpha^2\gamma}\normHsq{\Delta_t}$, and $\frac{3\beta}{\gamma} < \frac{1}{16}$. Then:
    \begin{align*}
        \E_t\normH{\Delta_{t+1}} \leq \left(1-\frac{\eta}{32\sqrt{\alpha}}\right)\normH{\Delta_t}.
    \end{align*}
\end{lemma}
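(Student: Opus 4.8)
The plan is to derive this lemma as a direct corollary of the one-step expectation bound in Theorem~\ref{Exp1}, and then convert the resulting estimate on $\E_t\normHsq{\Delta_{t+1}}$ into one on $\E_t\normH{\Delta_{t+1}}$ by concavity of the square root. First I would verify that the hypotheses of Theorem~\ref{Exp1} hold with $c=e^2$: the assumption $\epsilon_0<\tfrac1{8e^2\sqrt\alpha}$ is exactly $\epsilon_0<\tfrac1{8c\sqrt\alpha}$, the containments $\T\x\in\mathcal U_f(\epsilon_0\eta)$ and $\x_t\in\mathcal U_f(e^2\epsilon_0\eta)$ are precisely what Theorem~\ref{Exp1} requires, and the step size satisfies $\eta<\min\{\tfrac1{4\sqrt\alpha},\tfrac b{48\alpha^{3/2}\kappa}\}$. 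Theorem~\ref{Exp1} then yields
\[
\E_t\normHsq{\Delta_{t+1}}\ \le\ \Big(1-\tfrac{\eta}{8\sqrt\alpha}\Big)\normHsq{\Delta_t}+3\eta^2\tfrac{\alpha\kappa}{b}\normHsq{\T\Delta}.
\]

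Next I would control the noise term using the two remaining hypotheses. The bound $\normHsq{\T\Delta}\le\tfrac{n}{\kappa\alpha^2\gamma}\normHsq{\Delta_t}$ (which is the complement of the ``too close to the optimum'' event from \eqref{random_times}) turns the last term into $\tfrac{3\eta^2 n}{b\alpha\gamma}\normHsq{\Delta_t}$, and substituting $\eta=\tfrac{b\sqrt\alpha\beta}{n}$ simplifies this to $\tfrac{3\beta}{\gamma}\cdot\tfrac{\eta}{\sqrt\alpha}\normHsq{\Delta_t}$. The assumption $\tfrac{3\beta}{\gamma}<\tfrac1{16}$ then bounds it by $\tfrac{\eta}{16\sqrt\alpha}\normHsq{\Delta_t}$, so that, combining with the contraction term,
\[
\E_t\normHsq{\Delta_{t+1}}\ \le\ \Big(1-\tfrac{\eta}{8\sqrt\alpha}+\tfrac{\eta}{16\sqrt\alpha}\Big)\normHsq{\Delta_t}\ =\ \Big(1-\tfrac{\eta}{16\sqrt\alpha}\Big)\normHsq{\Delta_t}.
\]

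Finally, since $x\mapsto\sqrt x$ is concave, Jensen's inequality gives $\E_t\normH{\Delta_{t+1}}\le\sqrt{\E_t\normHsq{\Delta_{t+1}}}\le\sqrt{1-\tfrac{\eta}{16\sqrt\alpha}}\,\normH{\Delta_t}$, and the elementary inequality $\sqrt{1-x}\le1-x/2$ (applicable since $\tfrac{\eta}{16\sqrt\alpha}\in[0,1]$, which follows from $\eta<\tfrac1{4\sqrt\alpha}$) yields the claimed bound $\E_t\normH{\Delta_{t+1}}\le\big(1-\tfrac{\eta}{32\sqrt\alpha}\big)\normH{\Delta_t}$.

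There is no serious obstacle in this argument; it is essentially a bookkeeping corollary of Theorem~\ref{Exp1}. The two points that require care are (i) tracking how the specific choices $c=e^2$ and $\eta=\tfrac{b\sqrt\alpha\beta}{n}$ interact with $\tfrac{3\beta}{\gamma}<\tfrac1{16}$ so that the stochastic-gradient noise term is genuinely dominated by half of the approximate-Newton contraction rate, and (ii) the passage from the squared norm to the norm, which costs a factor of two in the rate and is the reason $\tfrac{\eta}{16\sqrt\alpha}$ becomes $\tfrac{\eta}{32\sqrt\alpha}$. Note that the conditioning event --- that $\x_t$ lies in $\mathcal U_f(e^2\epsilon_0\eta)$ and is not yet too close to $\x^*$ --- is exactly what licenses the application of Theorem~\ref{Exp1}; showing that this event actually persists with high probability along the trajectory is the business of the submartingale concentration argument developed in the following subsections and is outside the scope of this lemma.
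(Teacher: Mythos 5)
Your proposal is correct and follows essentially the same route as the paper's proof: invoke Theorem~\ref{Exp1} with $c=e^2$, substitute $\normHsq{\T\Delta}\le\tfrac{n}{\kappa\alpha^2\gamma}\normHsq{\Delta_t}$ and $\eta=\tfrac{b\sqrt\alpha\beta}{n}$ to dominate the noise term by $\tfrac{\eta}{16\sqrt\alpha}\normHsq{\Delta_t}$, and then pass from the squared norm to the norm. The only difference is cosmetic: the paper writes ``implying that'' for the final step, whereas you spell out the use of Jensen's inequality together with $\sqrt{1-x}\le 1-x/2$, which is a correct and slightly more careful presentation of the same argument.
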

\begin{proof} By Theorem \ref{Exp1} and using $\frac{3\beta}{\gamma} < \frac{1}{16}$, it follows that,
\begin{align*}
        \E_t\normHsq{\Delta_{t+1}} &\leq \left(1-\frac{\eta}{8\sqrt{\alpha}}\right)\normHsq{\Delta_t} + 3\eta^2\frac{\alpha\kappa}{b}\normHsq{\T\Delta}\\
        &\leq  \left(1-\frac{\eta}{8\sqrt{\alpha}}\right)\normHsq{\Delta_t} + 3\eta\cdot\frac{b\sqrt{\alpha}\beta}{n}\cdot\frac{\alpha\kappa}{b}\cdot\frac{n}{\kappa\alpha^2\gamma}\normHsq{\Delta_t}\\
        & = \left(1-\frac{\eta}{8\sqrt{\alpha}} + \frac{3\eta}{\sqrt{\alpha}}\cdot\frac{\beta}{\gamma}\right)\normHsq{\Delta_t}\\
        &\leq \left(1-\frac{\eta}{16\sqrt{\alpha}}\right)\normHsq{\Delta_t},
    \end{align*}
    implying that 
\begin{align*}
        \E_t\normH{\Delta_{t+1}} \leq \left(1-\frac{\eta}{32\sqrt{\alpha}}\right)\normH{\Delta_t},
    \end{align*}
    which concludes the proof.
    \ifdocenter
\else
\qed
\fi
    
\end{proof}

\begin{remark} We make a few remarks about the step size $\eta = \frac{b\sqrt{\alpha}\beta}{n}$ which comes out of our analysis as the optimal choice. Lemma \ref{submartingale_prop} requires that $\eta  < \min\left\{\frac{1}{4\sqrt{\alpha}}, \frac{b}{48\alpha^{3/2}\kappa}\right\}$, where the first term in the bound ensures convergence of the approximate Newton step, whereas the second term guarantees control over the gradient noise.
In the regime where $n\gg \alpha^2\kappa\beta$ (which we assume in our main result), it is always true that our chosen step size satisfies $\eta < \frac{b}{48\alpha^{3/2}\kappa}$. However, to ensure that $\eta < \frac{1}{4\sqrt{\alpha}}$ we must restrict the mini-batch size to $b<\frac{n}{4\alpha\beta}$. In our main result, we use $\beta = \mathcal{O}(\log(n/\alpha^2\kappa))$. This suggests that, in the regime of $b \gtrsim \frac{n}{\alpha\log(n)}$, the choice of the step size is primarily restricted by the Hessian approximation factor $\alpha$. Ultimately, this leads to deterioration of the convergence rate as $b$ increases beyond $\frac{n}{\alpha\log(n)}$.
\end{remark}    
   
Throughout our analysis,  
we assume $\frac{b\sqrt{\alpha}\beta}{n} < \frac{1}{4\sqrt{\alpha}}$, which corresponds to the condition $b\lesssim \frac{n}{\alpha\log n}$ stated informally in Theorem \ref{main_result}. This is the regime where the convergence rate of \texttt{Mb-SVRN} does not deteriorate with $b$. Having established the submartingale property, we now prove
the predictable quadratic variation bound property. For proving a strong upper bound on the quadratic variation, we need a result upper bounding $\normH{\T\Delta}$ by $\E_t\normH{\Delta_{t+1}}$, as long as the stopping time criteria are not satisfied at $\x_t$.

\begin{lemma}[\textbf{Bounded variation}]{\label{BV}}
Let $\T\x \in \mathcal{U}_f(\epsilon_0\eta)$ with $\epsilon_0 < \frac{1}{8e^2\sqrt{\alpha}}$ and $\eta \leq \frac{1}{4\sqrt{\alpha}}$. Also, let $\normHsq{\Delta_t} \leq e^2\normHsq{\T\Delta}$ and $\normHsq{\Delta_t} \geq \frac{\kappa\alpha^2\gamma}{n}\normHsq{\T\Delta}$ for some $\gamma >0$. Then, with $\omega = \sqrt{\frac{n}{\kappa\alpha^2\gamma}}$:
\begin{align*}
    \normH{\T\Delta} \leq 2\omega\E_t\normH{\Delta_{t+1}}.
\end{align*}
\end{lemma}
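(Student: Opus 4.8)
The plan is to produce a lower bound of the form $\E_t\normH{\Delta_{t+1}}\geq \tfrac12\normH{\Delta_t}$ and then transfer it to $\normH{\T\Delta}$ using the hypothesis $\normHsq{\Delta_t}\geq\frac{\kappa\alpha^2\gamma}{n}\normHsq{\T\Delta}$. The starting point is the unbiasedness of the variance-reduced gradient: since $\bar{\g}_t=\hat{\g}_t-\hat{\T\g}+\T\g$ with both mini-batch terms using the same sampled indices, $\E_t[\bar{\g}_t]=\g_t$, and hence $\E_t[\Delta_{t+1}]=\Delta_t-\eta\hat{\Hi}^{-1}\g_t$, where $\hat{\Hi}$ is fixed under the conditioning $\E_t$. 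Because $\x\mapsto\normH{\x}$ is convex, Jensen's inequality gives $\E_t\normH{\Delta_{t+1}}\geq \normH{\E_t[\Delta_{t+1}]}=\normH{\Delta_t-\eta\hat{\Hi}^{-1}\g_t}$.

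Next I would lower bound $\normH{\Delta_t-\eta\hat{\Hi}^{-1}\g_t}$ by the reverse triangle inequality: it is at least $\normH{\Delta_t}-\eta\normH{\hat{\Hi}^{-1}\g_t}$. This is where Lemma \ref{MVT} is invoked. Its hypotheses are met here: $\T\x\in\mathcal{U}_f(\epsilon_0\eta)$ together with $\normHsq{\Delta_t}\leq e^2\normHsq{\T\Delta}$ and the definition of $\mathcal{U}_f$ place $\x_t\in\mathcal{U}_f(e^2\epsilon_0\eta)$, which is precisely the $c=e^2$ regime, and $\epsilon_0<\frac1{8e^2\sqrt\alpha}$ matches the requirement of Lemma \ref{MVT}. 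It yields $\normH{\hat{\Hi}^{-1}\g_t}\leq 2\sqrt\alpha\normH{\Delta_t}$, so $\normH{\Delta_t-\eta\hat{\Hi}^{-1}\g_t}\geq(1-2\eta\sqrt\alpha)\normH{\Delta_t}\geq\tfrac12\normH{\Delta_t}$, using $\eta\leq\frac1{4\sqrt\alpha}$.

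Combining these steps gives $\E_t\normH{\Delta_{t+1}}\geq\tfrac12\normH{\Delta_t}$. The hypothesis $\normHsq{\Delta_t}\geq\frac{\kappa\alpha^2\gamma}{n}\normHsq{\T\Delta}$ rewrites, upon taking square roots, as $\normH{\Delta_t}\geq\frac1\omega\normH{\T\Delta}$ with $\omega=\sqrt{n/(\kappa\alpha^2\gamma)}$. Hence $\normH{\T\Delta}\leq\omega\normH{\Delta_t}\leq 2\omega\,\E_t\normH{\Delta_{t+1}}$, which is the desired conclusion.

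I do not anticipate a real obstacle; the proof is short. The only points needing care are bookkeeping ones: confirming that the stated hypotheses genuinely place $\x_t$ in the neighborhood required by Lemma \ref{MVT} (so the bound on $\normH{\hat{\Hi}^{-1}\g_t}$ is licensed), and ensuring the unbiasedness identity $\E_t[\bar{\g}_t]=\g_t$ is applied before Jensen's inequality rather than after. Note that the upper bound $\normHsq{\Delta_t}\leq e^2\normHsq{\T\Delta}$ is used \emph{only} to apply Lemma \ref{MVT}; it does not otherwise enter the estimate.
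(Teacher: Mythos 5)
Your proof is correct and follows essentially the same route as the paper's: Jensen's inequality applied to $\E_t\normH{\Delta_{t+1}}$ after using unbiasedness of $\bar{\g}_t$, the reverse triangle inequality, Lemma~\ref{MVT} to bound $\normH{\hat{\Hi}^{-1}\g_t}$, the step-size condition $\eta\leq\frac{1}{4\sqrt{\alpha}}$ to get the factor $\tfrac12$, and finally the lower bound on $\normHsq{\Delta_t}$ to transfer to $\normH{\T\Delta}$. Your side remarks about checking that $\x_t\in\mathcal{U}_f(e^2\epsilon_0\eta)$ and applying unbiasedness before Jensen are exactly the bookkeeping the paper performs.
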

\begin{proof}
We apply Jensen's inequality to $\E_t\normH{\Delta_{t+1}}$ to get,
    \begin{align*}
        \E_t\normH{\Delta_{t+1}} &\geq \normH{\E_t\Delta_{t+1}} = \normH{\Delta_t-\eta\hat{\Hi}^{-1}\g_t}\\
        &\geq \normH{\Delta_t} - \eta\normH{\hat{\Hi}^{-1}\g_t}.
    \end{align*}
    Since 
    $\normHsq{\Delta_t} \leq e^2\normHsq{\T\Delta}$, we have $\x_t \in \mathcal{U}_f(e^2\epsilon_0\eta)$. By using Lemma \ref{MVT} on the term $\normH{\hat{\Hi}^{-1}\g_t}$,
    \begin{align*}
         \E_t\normH{\Delta_{t+1}} &\geq \normH{\Delta_t} -2\eta\sqrt{\alpha}\normH{\Delta_t} \geq \frac{1}{2}\normH{\Delta_t}.
    \end{align*}
    The last inequality holds because $\eta \leq \frac{1}{4\sqrt{\alpha}}$. Finally, we use the condition that $\normHsq{\Delta_t} \geq \frac{\kappa\alpha^2\gamma}{n}\normHsq{\T\Delta}$, concluding the proof.
    \ifdocenter\else\qed\fi
\end{proof}

We proceed with the predictable quadratic variation bound for $\E_t\normH{\Delta_{t+1}}$, assuming that $\x_t$ does not satisfy the stopping time criteria.
\begin{lemma}[\textbf{Predictable quadratic variation bound}]\label{Quadratic_bound}
    Let $\T\x \in \mathcal{U}_f(\epsilon_0\eta)$ with $\epsilon_0 < \frac{1}{8e^2\sqrt{\alpha}}$ and $\eta = \frac{b\sqrt{\alpha}\beta}{n} \leq \min\left\{\frac{1}{4\sqrt{\alpha}},\frac{b}{48\alpha^{3/2}\kappa}\right\}$ for some $\beta >0$. Also, let $\normHsq{\Delta_t} \leq e^2\normHsq{\T\Delta}$ and $\normHsq{\Delta_t} \geq \frac{\kappa\alpha^2\gamma}{n}\normHsq{\T\Delta}$ for some $\gamma>0$. Then, with $\omega = \sqrt{\frac{n}{\kappa\alpha^2\gamma}}$:
    \begin{align*}
     \E_t\left(\normH{\Delta_{t+1}} - \E_t\normH{\Delta_{t+1}}\right)^2 < 80\eta\frac{\beta}{\gamma\sqrt{\alpha}}\left(\E_t\normH{\Delta_{t+1}}\right)^2.
\end{align*}
\end{lemma}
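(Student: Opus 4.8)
The plan is to reduce the variance of the scalar $\normH{\Delta_{t+1}}$ to the conditional second moment of the stochastic-gradient noise, and then reuse the estimates already developed in the proof of Theorem~\ref{Exp1} together with Lemma~\ref{BV}. Conditioned on $\x_t$, the only randomness in $\Delta_{t+1}=\Delta_t-\eta\hat{\Hi}^{-1}\bar\g_t$ comes from the gradient mini-batch, and since $\E_t[\bar\g_t]=\g_t$ we can write
\begin{align*}
\Delta_{t+1}=\E_t[\Delta_{t+1}]+\v,\qquad \v:=-\eta\,\hat{\Hi}^{-1}(\bar\g_t-\g_t),\qquad \E_t[\v]=0.
\end{align*}
First I would expand $\E_t\normHsq{\Delta_{t+1}}=\normHsq{\E_t\Delta_{t+1}}+\E_t\normHsq{\v}$ (the cross term vanishes because $\E_t[\v]=0$), and combine it with Jensen's inequality $(\E_t\normH{\Delta_{t+1}})^2\geq\normHsq{\E_t\Delta_{t+1}}$ to obtain the clean reduction
\begin{align*}
\E_t\big(\normH{\Delta_{t+1}}-\E_t\normH{\Delta_{t+1}}\big)^2\ \leq\ \E_t\normHsq{\v}\ =\ \eta^2\,\E_t\normHsq{\hat{\Hi}^{-1}(\bar\g_t-\g_t)}.
\end{align*}

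The second step is to bound the right-hand side exactly as in the proof of Theorem~\ref{Exp1}: using $\hat{\Hi}\approx_{\sqrt{\alpha}}\T\Hi$ and $\T\Hi\approx_{(1+\epsilon_0\eta)}\Hi$ to peel off $\nsq{\Hi^{1/2}\hat{\Hi}^{-1/2}}\leq\sqrt{\alpha}(1+\epsilon_0\eta)$, then $\|\hat{\Hi}^{-1}\|\leq\sqrt{\alpha}/\mu$, and finally Lemma~\ref{lma1} with $c=e^2$ (which is legitimate since $\normHsq{\Delta_t}\leq e^2\normHsq{\T\Delta}$ and $\T\x\in\mathcal{U}_f(\epsilon_0\eta)$ force $\x_t\in\mathcal{U}_f(e^2\epsilon_0\eta)$). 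This yields $\E_t\normHsq{\hat{\Hi}^{-1}(\bar\g_t-\g_t)}\leq\frac{\alpha(1+\epsilon_0\eta)(1+e^2\epsilon_0\eta)}{\mu}\cdot\frac{\lambda}{b}\normHsq{\Delta_t-\T\Delta}$, and bounding $\normHsq{\Delta_t-\T\Delta}\leq 2\normHsq{\Delta_t}+2\normHsq{\T\Delta}\leq 2(e^2+1)\normHsq{\T\Delta}$ collects everything into an absolute constant times $\eta^2\frac{\alpha\kappa}{b}\normHsq{\T\Delta}$.

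The last step is to trade $\normHsq{\T\Delta}$ for $(\E_t\normH{\Delta_{t+1}})^2$, which is precisely Lemma~\ref{BV}, whose hypotheses coincide with those assumed here: $\normHsq{\T\Delta}\leq 4\omega^2(\E_t\normH{\Delta_{t+1}})^2$ with $\omega^2=\frac{n}{\kappa\alpha^2\gamma}$. Putting the pieces together, the prefactor becomes $8(e^2+1)(1+\epsilon_0\eta)(1+e^2\epsilon_0\eta)\cdot\frac{\eta^2 n}{b\alpha\gamma}$, and substituting the prescribed step size $\eta=\frac{b\sqrt{\alpha}\beta}{n}$ simplifies $\frac{\eta^2 n}{b\alpha\gamma}=\frac{\eta\beta}{\sqrt{\alpha}\gamma}$. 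Since $\epsilon_0<\frac{1}{8e^2\sqrt{\alpha}}$ and $\eta\leq\frac{1}{4\sqrt{\alpha}}$ keep $(1+\epsilon_0\eta)(1+e^2\epsilon_0\eta)$ within a few percent of $1$ while $8(e^2+1)<70$, the constant is strictly below $80$, which is the claim. I expect the only genuinely delicate point to be the bookkeeping of which norm (the $\Hi$-norm versus the Euclidean norm) appears at each stage of the second-moment estimate, and checking that Lemmas~\ref{lma1} and~\ref{BV} are both invoked with all their hypotheses in force; everything else is the same routine constant-chasing as in the proof of Theorem~\ref{Exp1}.
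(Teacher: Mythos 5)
Your proof is correct and follows essentially the same route as the paper's: the bias--variance split of $\E_t\normHsq{\Delta_{t+1}}$ together with Jensen's inequality to isolate $\eta^2\E_t\normHsq{\hat{\Hi}^{-1}(\bar\g_t-\g_t)}$, then the same chain of spectral bounds $\nsq{\Hi^{1/2}\hat{\Hi}^{-1/2}}\leq\sqrt{\alpha}(1+\epsilon_0\eta)$ and $\|\hat\Hi^{-1}\|\leq\sqrt{\alpha}/\mu$, Lemma~\ref{lma1} with $c=e^2$, and finally Lemma~\ref{BV} to trade $\normHsq{\T\Delta}$ for $(\E_t\normH{\Delta_{t+1}})^2$ before substituting $\eta=\tfrac{b\sqrt{\alpha}\beta}{n}$. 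Your constant bookkeeping ($8(e^2+1)\cdot(1+\epsilon_0\eta)(1+e^2\epsilon_0\eta)<80$) matches the paper's $20\times 4=80$ up to the rounding used.
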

\begin{proof}
    Consider $\E_t\left(\normH{\Delta_{t+1}} - \E_t\normH{\Delta_{t+1}}\right)^2$. We have,
    \begin{align*}
        \E_t\left(\normH{\Delta_{t+1}} - \E_t\normH{\Delta_{t+1}}\right)^2 &= \E_t\normHsq{\Delta_{t+1}} - (\E_t\normH{\Delta_{t+1}})^2\\
        & = \E_t \normHsq{\Delta_t -\eta\hat{\Hi}^{-1}\bar{\g}_t} - (\E_t\normH{\Delta_{t+1}})^2\\
        & = \normHsq{\Delta_t -\eta\hat{\Hi}^{-1}\g_t} + \eta^2\E_t\normHsq{\hat{\Hi}^{-1}(\bar{\g}_t - \g_t)} \\
        &+ 2\eta\E_t\left(\Delta_t -\eta\hat{\Hi}^{-1}\g_t\right)^\top\left(\hat{\Hi}^{-1}(\bar{\g}_t - \g_t)\right) - (\E_t\normH{\Delta_{t+1}})^2.
    \end{align*}
    Using $\E_t\left(\hat{\Hi}^{-1}(\bar{\g}_t - \g_t)\right) = \hat{\Hi}^{-1}\E_t\left(\bar{\g}_t - \g_t\right)=0$ and $\Delta_t- \eta\hat{\Hi}^{-1}\g_t = \E_t\Delta_{t+1}$, we get,
    \begin{align*}
        \E_t\left(\normH{\Delta_{t+1}} - \E_t\normH{\Delta_{t+1}}\right)^2 = \normHsq{\E_t\Delta_{t+1}} + \eta^2\E_t\normHsq{\hat{\Hi}^{-1}(\bar{\g}_t - \g_t)} - (\E_t\normH{\Delta_{t+1}})^2.
    \end{align*}
    Using Jensen's inequality on the first term, we have $\normH{\E_t\Delta_{t+1}} \leq \E_t\normH{\Delta_{t+1}}$, from which it follows that,
    \begin{align}
          \E_t\left(\normH{\Delta_{t+1}} - \E_t\normH{\Delta_{t+1}}\right)^2 &\leq \left(\E_t\normH{\Delta_{t+1}}\right)^2 + \eta^2\E_t\normHsq{\hat{\Hi}^{-1}(\bar{\g}_t - \g_t)} - (\E_t\normH{\Delta_{t+1}})^2 \nonumber \\
          &= \eta^2\E_t\normHsq{\hat{\Hi}^{-1}(\bar{\g}_t - \g_t)}\nonumber \\
          & = \eta^2 \E_t \nsq{\Hi^{1/2}\hat{\Hi}^{-1/2}\hat{\Hi}^{-1/2}(\bar{\g}_t - \g_t)} \nonumber \\
          & \leq \eta^2\cdot\nsq{\Hi^{1/2}\hat{\Hi}^{-1/2}}\cdot\E_t\nsq{\hat{\Hi}^{-1/2}(\bar{\g}_t - \g_t)}. \label{PQV_1}
    \end{align}
    Using the fact that $\hat{\Hi} \approx_{\sqrt{\alpha}} \T\Hi$ and $ \T\Hi \approx_{ (1+\epsilon_0\eta)}\Hi$, we have $\hat{\Hi} \approx_{\sqrt{\alpha}(1+\epsilon_0\eta)}\Hi$. Therefore,  $\nsq{\Hi^{1/2}\hat{\Hi}^{-1/2}} = \norm{\Hi^{1/2}\hat{\Hi}^{-1}\Hi^{1/2}} \leq \sqrt{\alpha}(1+\epsilon_0\eta)$. Combining this with the inequality \eqref{PQV_1},
    \begin{align*}
    \E_t\left(\normH{\Delta_{t+1}} - \E_t\normH{\Delta_{t+1}}\right)^2 &\leq  \eta^2 \sqrt{\alpha}(1+\epsilon_0\eta) \cdot \E_t\nsq{\hat{\Hi}^{-1/2}(\g_t-\bar{\g}_t)}.
\end{align*}
Upper bounding $\nsq{\hat{\Hi}^{-1/2}} = \norm{\hat{\Hi}^{-1}} \leq \frac{\sqrt\alpha}{\mu}$, we get,
\begin{align}
   \E_t\left(\normH{\Delta_{t+1}} - \E_t\normH{\Delta_{t+1}}\right)^2 &\leq  \eta^2 \frac{\alpha}{\mu}(1+\epsilon_0\eta) \cdot \E_t[\nsq{(\g_t-\bar{\g}_t)}]. \label{PQV_2}
\end{align}
Since $\normHsq{\Delta_t} < c^2\normHsq{\T\Delta}$, we have $\x_t \in \mathcal{U}_f(e^2\epsilon_0\eta)$. By using Lemma \ref{lma1} on the last term of the inequality \eqref{PQV_2}, it follows that,
\begin{align}
    \E_t\left[\nsq{\g_t-\bar{\g}_t}\right] &\leq \frac{(1+c\epsilon_0\eta)\lambda}{b}\normHsq{\Delta_t-\T\Delta}\leq \frac{2(1+c\epsilon_0\eta)\lambda}{b}\left(\normHsq{\Delta_t}+\normHsq{\T\Delta}\right). \label{PQV_3}
\end{align}
Substituting \eqref{PQV_3} in the inequality \eqref{PQV_2}, we get,
\begin{align*}
    \E_t\left(\normH{\Delta_{t+1}} - \E_t\normH{\Delta_{t+1}}\right)^2 &\leq 2(1+\epsilon_0\eta)(1+c\epsilon_0\eta)\eta^2\frac{\kappa\alpha}{b}\left(\normHsq{\Delta_t}+\normHsq{\T\Delta}\right).
\end{align*}
Again, using $\normHsq{\Delta_t} < c^2\normHsq{\T\Delta}$,
\begin{align*}
    \E_t\left(\normH{\Delta_{t+1}} - \E_t\normH{\Delta_{t+1}}\right)^2 <20\eta^2\frac{\kappa\alpha}{b}\normHsq{\T\Delta}.
\end{align*}
Since 
$\eta \leq \frac{1}{4\sqrt{\alpha}}$, we use Lemma \ref{BV} on $\normHsq{\T\Delta}$ to obtain,
\begin{align*}
    \E_t\left(\normH{\Delta_{t+1}} - \E_t\normH{\Delta_{t+1}}\right)^2 <80\eta^2\frac{\kappa\alpha}{b}\omega^2\left(\E_t\normH{\Delta_{t+1}}\right)^2.
\end{align*}
Substituting one of the $\eta$ factors as $\frac{b\sqrt{\alpha}\beta}{n}$, and $\omega^2 = \frac{n}{\kappa\alpha^2\gamma}$, concludes the proof.
\ifdocenter
\else
\qed
\fi

\end{proof}

Finally, as the last building block of the submartingale framework, we establish a high probability upper and lower bound on $\normH{\Delta_{t+1}}$ in terms of $\normH{\Delta_t}$. Here, due to
the noise in the stochastic gradient, we cannot prove almost sure bounds. However, by Lemmas \ref{lma3} and \ref{BV}, we can get the upper and lower bounds, holding with probability at least $1-\delta\frac{b^2}{n^2}$, for any $\delta>0$. For notational convenience, we use $M$ to denote the high probability upper bound on $\norm{\bar{\g}_t-\g_t}$, guaranteed in Lemma \ref{lma3}, 
$$M =\begin{cases}
    \frac{\kappa}{b} \  \ \  \ \ \text{if} \ b <\frac{8}{9}\kappa \\
    \sqrt{\frac{\kappa}{b}} \ \ \text{if} \ b \geq \frac{8}{9}\kappa.
\end{cases}$$
\begin{lemma}[\textbf{One step high probability upper bound}]\label{high_prob}
 Let $n > \frac{(96)^2\beta^2\ln(n/b\delta)^2}{\gamma}\kappa$, $\T\x \in \mathcal{U}_f(\epsilon_0\eta)$ with $\epsilon_0 < \frac{1}{8e^2\sqrt{\alpha}}$, $ \eta = \frac{b\sqrt{\alpha}\beta}{n} \leq \min\left\{\frac{1}{4\sqrt{\alpha}},\frac{b}{48\alpha^{3/2}\kappa}\right\}$ and $b < \min\{\frac{n}{4\alpha\beta}, \frac{\gamma n}{(96)^2\beta^2\ln(n/b\delta)^2}\}$, for some $\beta >0$, $\gamma>0$, and any $\delta>0$. Also, let $\normHsq{\Delta_t} \leq e^2\normHsq{\T\Delta}$ and $\normHsq{\Delta_t} \geq \frac{\kappa\alpha^2\gamma}{n}\normHsq{\T\Delta}$. Then, with $\omega = \sqrt{\frac{n}{\kappa\alpha^2\gamma}}$, $s=96\eta\omega\sqrt{\alpha}M\ln(n/\delta b)$ and probability at least $1-\delta\frac{b^2}{n^2}$:
\begin{align*}
    \left(1+s\right)^{-1}\E_t\normH{\Delta_{t+1}} \leq\normH{\Delta_{t+1}} \leq \left(1+s\right)\E_t\normH{\Delta_{t+1}}.
\end{align*}

 \end{lemma}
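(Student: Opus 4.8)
The plan is to split the inner step into its conditional mean plus centered noise, and to show that both the noise and the ``Jensen gap'' between $\normH{\E_t\Delta_{t+1}}$ and $\E_t\normH{\Delta_{t+1}}$ are, on the stated high-probability event, only a small fraction of $\E_t\normH{\Delta_{t+1}}$. Concretely, since $\bar\g_t-\g_t$ is conditionally unbiased we have $\E_t\Delta_{t+1}=\Delta_t-\eta\hat\Hi^{-1}\g_t$, hence
\[
\Delta_{t+1}=\E_t\Delta_{t+1}-\eta\hat\Hi^{-1}(\bar\g_t-\g_t),
\qquad\text{so}\qquad
\bigl|\,\normH{\Delta_{t+1}}-\normH{\E_t\Delta_{t+1}}\,\bigr|\ \le\ B:=\eta\,\normH{\hat\Hi^{-1}(\bar\g_t-\g_t)}
\]
by the triangle inequality. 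Together with Jensen's inequality $\normH{\E_t\Delta_{t+1}}\le\E_t\normH{\Delta_{t+1}}$, the lemma reduces to (a) $B\le\tfrac14 s\,\E_t\normH{\Delta_{t+1}}$ with probability $\ge1-\delta b^2/n^2$, and (b) $\normH{\E_t\Delta_{t+1}}\ge(1-\tfrac1{10}s)\,\E_t\normH{\Delta_{t+1}}$ deterministically: granting these, the upper bound is $\normH{\Delta_{t+1}}\le\normH{\E_t\Delta_{t+1}}+B\le(1+\tfrac14 s)\,\E_t\normH{\Delta_{t+1}}\le(1+s)\,\E_t\normH{\Delta_{t+1}}$, and the lower bound is $\normH{\Delta_{t+1}}\ge\normH{\E_t\Delta_{t+1}}-B\ge(1-\tfrac7{20}s)\,\E_t\normH{\Delta_{t+1}}\ge(1+s)^{-1}\E_t\normH{\Delta_{t+1}}$, the last step being the elementary inequality $(1-\tfrac7{20}s)(1+s)\ge1$, valid whenever $s<1$.

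For (a) I would chain three bounds. First, $\hat\Hi\approx_{\sqrt\alpha(1+\epsilon_0\eta)}\Hi$ (from $\hat\Hi\approx_{\sqrt\alpha}\T\Hi$ and $\T\Hi\approx_{1+\epsilon_0\eta}\Hi$, exactly as in the proofs of Theorem~\ref{Exp1} and Lemma~\ref{Quadratic_bound}) together with $\norm{\hat\Hi^{-1}}\le\sqrt\alpha/\mu$ gives $\normH{\hat\Hi^{-1}\v}\le\sqrt{\alpha(1+\epsilon_0\eta)/\mu}\,\norm{\v}$. Second, Lemma~\ref{lma3} gives, on an event of probability $\ge1-\delta b^2/n^2$, the single inequality $\norm{\bar\g_t-\g_t}\le\tfrac{16}{3}\sqrt\mu\,M\,\ln(n/b\delta)\,\normH{\x_t-\T\x}$, which covers both regimes of $b$ after substituting $\lambda=\mu\kappa$ and using the definition of $M$ (keeping the larger constant). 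Third, $\normH{\x_t-\T\x}\le\normH{\Delta_t}+\normH{\T\Delta}$, and Lemma~\ref{BV} together with the intermediate bound $\E_t\normH{\Delta_{t+1}}\ge\tfrac12\normH{\Delta_t}$ from its proof (using Lemma~\ref{MVT} and $\eta\le\tfrac1{4\sqrt\alpha}$) yields $\normH{\Delta_t}\le2\,\E_t\normH{\Delta_{t+1}}$ and $\normH{\T\Delta}\le2\omega\,\E_t\normH{\Delta_{t+1}}$, hence $\normH{\x_t-\T\x}\le4\omega\,\E_t\normH{\Delta_{t+1}}$ (using $\omega\ge1$, which holds under the hypothesis on $n$ in the relevant regime). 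Multiplying the three bounds by $\eta$ and recalling that $\eta\sqrt\alpha\,\omega\,M\ln(n/b\delta)=s/96$ gives $B\le\tfrac29\sqrt{1+\epsilon_0\eta}\,s\,\E_t\normH{\Delta_{t+1}}<\tfrac14 s\,\E_t\normH{\Delta_{t+1}}$.

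For (b), the triangle inequality inside the expectation gives $\E_t\normH{\Delta_{t+1}}\le\normH{\E_t\Delta_{t+1}}+\E_t[B]$, so it suffices to bound $\E_t[B]\le\sqrt{\E_t[B^2]}$ with $\E_t[B^2]=\eta^2\,\E_t\normHsq{\hat\Hi^{-1}(\bar\g_t-\g_t)}$. This last quantity is precisely the one estimated inside the proof of Lemma~\ref{Quadratic_bound} (via Lemma~\ref{lma1} and $\normHsq{\Delta_t}\le e^2\normHsq{\T\Delta}$), where it is shown to be at most $20\,\eta^2\tfrac{\kappa\alpha}{b}\normHsq{\T\Delta}\le80\,\eta^2\tfrac{\kappa\alpha}{b}\omega^2\bigl(\E_t\normH{\Delta_{t+1}}\bigr)^2$ by Lemma~\ref{BV}. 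Since $\sqrt{\kappa/b}\le M\ln(n/b\delta)$ in both regimes of $b$, this gives $\E_t[B]\le\sqrt{80}\,\eta\sqrt\alpha\,\omega\,M\ln(n/b\delta)\,\E_t\normH{\Delta_{t+1}}=\tfrac{\sqrt{80}}{96}\,s\,\E_t\normH{\Delta_{t+1}}<\tfrac1{10}s\,\E_t\normH{\Delta_{t+1}}$, hence $\normH{\E_t\Delta_{t+1}}\ge\E_t\normH{\Delta_{t+1}}-\E_t[B]\ge(1-\tfrac1{10}s)\,\E_t\normH{\Delta_{t+1}}$. Finally $s<1$: on $b<\tfrac89\kappa$ one has $s=96\beta\ln(n/b\delta)\sqrt{\kappa/(n\gamma)}<1$ by the hypothesis $n>(96)^2\beta^2\ln(n/b\delta)^2\kappa/\gamma$, and on $b\ge\tfrac89\kappa$ one has $s=96\beta\ln(n/b\delta)\sqrt{b/(n\gamma)}<1$ by the hypothesis $b<\gamma n/\!\left((96)^2\beta^2\ln(n/b\delta)^2\right)$.

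The only real difficulty is quantitative: each constant accumulated in the chain $B\le\eta\cdot(\text{spectral bound})\cdot(\text{Lemma~\ref{lma3}})\cdot(\text{Lemma~\ref{BV}})$ must be tight enough to stay below the factor $96$ baked into $s$. The step deserving care is the replacement of $\normH{\x_t-\T\x}$ and $\normHsq{\T\Delta}$ by multiples of $\E_t\normH{\Delta_{t+1}}$, which is legitimate precisely because of the hypothesis $\normHsq{\Delta_t}\ge\tfrac{\kappa\alpha^2\gamma}{n}\normHsq{\T\Delta}$ feeding into Lemma~\ref{BV}: without it $\x_t$ could already be essentially at the optimum and the ratio $B/\E_t\normH{\Delta_{t+1}}$ would be unbounded --- this is exactly the ``too close to the optimum'' scenario that the stopping-time construction is designed to exclude.
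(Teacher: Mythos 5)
Your decomposition $\Delta_{t+1}=\E_t\Delta_{t+1}-\eta\hat\Hi^{-1}(\bar\g_t-\g_t)$ together with $\bigl|\normH{\Delta_{t+1}}-\normH{\E_t\Delta_{t+1}}\bigr|\le B$ and the separate treatment of the ``Jensen gap'' is the same argument as the paper, just organized more symmetrically: the paper proves the upper bound by the triangle inequality plus Jensen, and the lower bound by starting from $\E_t\normH{\Delta_{t+1}}$ and inserting two noise terms (one bounded via Lemma~\ref{lma3}, the other via Lemma~\ref{lma1} and Cauchy--Schwarz), whereas you unify these into the single quantity $B$ and its conditional mean. Both use the identical building blocks (the spectral bound $\hat\Hi\approx_{\sqrt\alpha(1+\epsilon_0\eta)}\Hi$, Lemma~\ref{lma3} for the high-probability event, Lemma~\ref{lma1} for the in-expectation bound, Lemma~\ref{BV} for converting $\normH{\T\Delta}$ to $\E_t\normH{\Delta_{t+1}}$), and both end with $(1-cs)(1+s)\ge1$.

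One constant-tracking step of yours is not actually licensed by the lemma's hypotheses: you invoke $\omega\ge1$ in order to replace $\normH{\Delta_t}\le2\,\E_t\normH{\Delta_{t+1}}$ by $2\omega\,\E_t\normH{\Delta_{t+1}}$, arriving at $\normH{\x_t-\T\x}\le4\omega\,\E_t\normH{\Delta_{t+1}}$ and hence $B\le\tfrac14 s\,\E_t\normH{\Delta_{t+1}}$. The stated hypotheses only give $\omega^2=n/(\kappa\alpha^2\gamma)>(96\beta\ln(n/b\delta))^2/(\alpha\gamma)^2$, which does not force $\omega\ge1$ for arbitrary $\gamma$ (it does hold under the later choice $\gamma\approx1280\beta^2\ln(\cdot)$ together with $n>400\kappa\alpha^2\gamma\ln(\cdot)^2$ in Theorem~\ref{freedman_concentration}, but not at this level of generality). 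Without it one must bound $\normH{\x_t-\T\x}\le(1+e)\normH{\T\Delta}\le2(1+e)\omega\,\E_t\normH{\Delta_{t+1}}$ as the paper does, giving $B\lesssim 0.42\,s\,\E_t\normH{\Delta_{t+1}}$; combined with your $\sqrt{80}/96\,s$ Jensen-gap estimate (which is itself about $12\%$ looser than the paper's, since routing through the squared quantity in Lemma~\ref{Quadratic_bound} gives $\sqrt{80}$ where the paper's direct Cauchy--Schwarz via Lemma~\ref{lma1} gives $8$), the final step $(1-cs)(1+s)\ge1$ then only holds for $s\lesssim0.97$ rather than $s<1$. This is a small quantitative gap, not a conceptual one, and is fixed either by using the paper's $\normH{\Delta_t}\le e\normH{\T\Delta}$ route or its tighter Jensen-gap estimate; but as written the claim $B\le\tfrac14 s\,\E_t\normH{\Delta_{t+1}}$ and the sufficiency of $s<1$ rest on the unproved $\omega\ge1$.
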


\begin{proof}
    We first prove the right-hand side inequality (upper bound),
    \begin{align*}
        \normH{\Delta_{t+1}} &=\normH{\Delta_t -\eta\hat{\Hi}^{-1}\bar{\g}_t}\\
        & \leq \normH{\Delta_t -\eta\hat{\Hi}^{-1}\g_t} + \eta\normH{\hat{\Hi}^{-1}(\bar{\g}_t-\g_t)}\\
        &\leq  \normH{\Delta_t -\eta\hat{\Hi}^{-1}\g_t} +\eta\norm{\Hi^{1/2}\hat{\Hi}^{-1}\Hi^{1/2}}\cdot\norm{\Hi^{-1/2}(\bar{\g}_t-\g_t)}.
    \end{align*}
    Since, 
    $\hat{\Hi} \approx_{\sqrt{\alpha}} \T\Hi$ and $\T\x \in \mathcal{U}_f(\epsilon_0\eta)$, we have $\hat{\Hi} \approx_{\sqrt{\alpha}(1+\epsilon_0\eta)} \Hi$, and therefore, $\norm{\Hi^{1/2}\hat{\Hi}^{-1}\Hi^{1/2}} \leq \sqrt{\alpha}(1+\epsilon_0\eta)$. Using this we get,
     \begin{align*}
         \normH{\Delta_{t+1}} &\leq \normH{\Delta_t -\eta\hat{\Hi}^{-1}\g_t} + \eta\frac{\sqrt{\alpha}(1+\epsilon_0\eta)}{\sqrt{\mu}}\norm{(\bar{\g}_t-\g_t)}.
     \end{align*}
         Note that the first term is $\normH{\E_t\Delta_{t+1}}$. Also as $\normHsq{\Delta_t} \leq e^2\normHsq{\T\Delta}$, we have $\x_t \in \mathcal{U}_f(e^2\epsilon_0\eta)$, and therefore, we can apply Lemma \ref{lma3} on the second term. As $M$ captures the effect of whether $b <\frac{8}{9}\kappa$ or $b \geq \frac{8}{9}\kappa$, we have with probability at least $1-\delta\frac{b^2}{n^2}$,
    \begin{align*}
        \normH{\Delta_{t+1}} &\leq \E_t\normH{\Delta_{t+1}} + 6\eta\sqrt{\alpha}\cdot M\ln(n/b\delta)\left(\normH{\Delta_t} + \normH{\T\Delta}\right).
    \end{align*}
    By
    $\normH{\Delta_t} \leq e\cdot\normH{\T\Delta}$ it follows that,
    \begin{align*}
        \normH{\Delta_{t+1}} &\leq \E_t\normH{\Delta_{t+1}} + 20\eta\sqrt{\alpha}\cdot M \ln(n/b\delta)\normH{\T\Delta},
    \end{align*}
    and using Lemma \ref{BV}, we upper bound $\normH{\T\Delta}$ by $2\omega\cdot\E_t\normH{\Delta_{t+1}}$ and get,
    \begin{align}
        \normH{\Delta_{t+1}} &\leq \left(1+ 40\omega\eta\sqrt{\alpha}\cdot M \ln(n/b\delta)\right)\E_t\normH{\Delta_{t+1}} \label{one_step_eq_1}.
    \end{align}
    
    Next, we prove the left-hand side inequality (lower bound), observing that,
    \begin{align*}
        \E_t\normH{\Delta_{t+1}} &= \E_t \normH{\Delta_t -\eta\hat{\Hi}^{-1}\bar{\g}_t}\\
        & \leq \normH{\Delta_t -\eta\hat{\Hi}^{-1}\g_t} + \eta\E_t\normH{\hat{\Hi}^{-1}(\bar{\g}_t-\g_t)}\\
        & \leq \normH{\Delta_t -\eta\hat{\Hi}^{-1}\bar{\g}_t} + \eta\normH{\hat{\Hi}^{-1}(\bar{\g}_t-\g_t)} +\eta\E_t\normH{\hat{\Hi}^{-1}(\bar{\g}_t-\g_t)}.
    \end{align*}
    Note that,
    \begin{align*}\E_t\normH{\hat{\Hi}^{-1}(\bar{\g}_t-\g_t)} &\leq \norm{\Hi^{1/2}\hat{\Hi}^{-1}\Hi^{1/2}}\cdot\norm{\Hi^{-1/2}}\cdot\E_t\norm{(\bar{\g}_t-\g_t)} \\ &\leq (1+\epsilon_0\eta)\frac{\sqrt{\alpha}}{\sqrt{\mu}}\cdot\E_t\norm{(\bar{\g}_t-\g_t)}.
    \end{align*}
    Using Lemma \ref{lma1} to upper bound $\E_t\norm{(\bar{\g}_t-\g_t)}$ and substituting in the previous inequality for $\E_t\normH{\Delta_{t+1}}$, we get,
    \begin{align*}
        \E_t\normH{\Delta_{t+1}} \leq &\normH{\Delta_{t+1}} + \eta\normH{\hat{\Hi}^{-1}(\bar{\g}_t-\g_t)} \\ 
&\qquad +\eta(1+\epsilon_0\eta)\sqrt{1+c\epsilon_0\eta}\cdot\frac{\sqrt{\kappa\alpha}}{\sqrt{b}}\left(\normH{\Delta_t} + \normH{\T\Delta}\right).
    \end{align*} 
    Now in the last term, we use $\normH{\Delta_t} < e\cdot\normH{\T\Delta}$ and upper bound the second term using Lemma \ref{lma3}. Following the same steps as we did for the right-hand side inequality, we get
    \begin{align*}
         \E_t\normH{\Delta_{t+1}} &\leq \normH{\Delta_{t+1}} + 20\eta\sqrt{\alpha}\cdot M\ln(n/b\delta)  
         \normH{\T\Delta} + 4\eta\sqrt{\alpha}\frac{\sqrt{\kappa}}{\sqrt{b}}\normH{\T\Delta}.
    \end{align*}
    Using Lemma \ref{BV} on $\normH{\T\Delta}$,
    \begin{align*}
        \E_t\normH{\Delta_{t+1}} &\leq \normH{\Delta_{t+1}} + 48\eta\sqrt{\alpha}\cdot M \ln(n/b\delta)\E_t\normH{\Delta_{t+1}},
    \end{align*}
implying,
    \begin{align*}
        \E_t\normH{\Delta_{t+1}} &\leq \left(1-48\omega\eta\sqrt{\alpha}\cdot M\ln(n/b\delta)\right)^{-1}\normH{\Delta_{t+1}}.
    \end{align*}
    By the condition on $n$, namely 
    $n > \frac{(96)^2\beta^2\ln(n/b\delta)^2}{\gamma}\max\{\kappa,b\}$, it follows that,
    \begin{align}
        \E_t\normH{\Delta_{t+1}} &\leq \left(1+96\omega\eta\sqrt{\alpha}\cdot M\ln(n/b\delta)\right)\normH{\Delta_{t+1}} \label{one_step_eq_2}.
    \end{align}
    Combining (\ref{one_step_eq_1}) and (\ref{one_step_eq_2}) we conclude the proof.
    \ifdocenter
\else
\qed
\fi
\end{proof}

\begin{remark} 
We make the following remark about the mini-batch size prescribed in Lemma \ref{high_prob}, i.e., $b < \min\{\frac{n}{4\alpha\beta}, \frac{\gamma\cdot n}{(96)^2\beta^2\ln(n/b\delta)^2}\}$. The first term, $b < \frac{n}{4\alpha\beta}$, ensures that optimal step size $\eta$ can be picked for $b \lesssim \frac{n}{\alpha\log(n)}$. The second condition on $b$ eventually reduces to $b \lesssim \frac{n}{\log(n)}$, as we set $\gamma = 1280\beta^2\ln(n/b\delta)$ later in the analysis (see Theorem \ref{freedman_concentration}).
\end{remark}  

\subsection{Martingale setup}{\label{martingale_setup}
In what follows, we analyze the behavior of a carefully defined random process from random times $L_i$ to $T_i$, for $i\geq 0$. In particular, we show that if $T_i = \frac{n}{b}$ then $\normHsq{\Delta_{T_i}} < c_1\frac{\kappa\alpha^2\gamma}{n}\normHsq{\T\Delta}$ with very high probability for some absolute constant $c_1$, and if $T_i < \frac{n}{b}$ then $\normHsq{\Delta_{T_i}} <\frac{\kappa\alpha^2\gamma}{n}\normHsq{\T\Delta}$ with very high probability. For any $t\geq0$, such that $L_i + t <T_i$, we consider an event $\mathcal{A}^i_{t}$,
\begin{align*}
    \mathcal{A}^i_{t}&=\bigcap_{j=L_i}^{j=L_i+t}\left\{\norm{\g_{j} - \bar{\g}_j}_{\Hi} \leq 6M\ln(n/b\delta)\normH{\x_j-\T\x}\right\}.
\end{align*}
The event $\mathcal{A}^i_{t}$ captures the occurrence of the high probability event mentioned in Lemma \ref{lma3}, for iterates ranging from random time $L_i$ to $L_i+t$. Using Lemma \ref{lma3}, it follows that $\Pr\left(\mathcal{A}^i_{t+1} | \mathcal{A}^i_{t}\right) \geq 1-\delta\frac{b^2}{n^2}$, and by the union bound $\Pr\left(\mathcal{A}^i_{t+1} | \mathcal{A}^i_{0}\right) \geq 1-t\delta\frac{b^2}{n^2} > 1-\delta\frac{b}{n}$.
Consider a random process $Y_t^i$ defined as:
\begin{align*}
    Y_0^i &= \ln(\normH{\Delta_{L_i}}),
\end{align*}
and for $ L_i + t < T_i$,
\begin{align*}
    Y_{t+1}^{i} &= \left(\ln(\normH{\Delta_{L_i+t+1}}) + \sum_{j=0}^{t}{\ln\left(\frac{\normH{\Delta_{L_i+j}}}{\E_{L_i+j}\normH{\Delta_{L_i+j+1}}}\right)}\right)\cdot\ind_{\mathcal{A}^i_t} + Y^i_t\cdot\ind_{\neg\mathcal{A}^i_t},
\end{align*}
where $\E_{L_i+t}$ means expectation conditioned on the past till iterate $\x_{L_i+t}$. At iteration $t+1$, the random process $Y^i_{t+1}$ checks for the two halting conditions mentioned in the definition of $T_i$. If any of the halting condition is met then we get $L_i+t=T_i$, the random process halts, otherwise, the random process proceeds to iteration $t+2$. We analyze the random process $Y_{t+1}^{i}$ till $L_i+t= T_i$. The first observation is that $Y_{t+1}^{i}$ is a sub-martingale, proven as follows. If $\ind_{\mathcal{A}^i_t}=0$ or $\ind_{\mathcal{A}^i_{t-1}}=0$ then $Y^i_{t+1} = Y^i_{t}$ and trivially $\E_tY^{t+1}_i = Y^i_t$. So we consider $\ind_{\mathcal{A}^i_t} = \ind_{\mathcal{A}^i_{t-1}}=1$ and get,
\begin{align}
    \E_t[Y_{t+1}^{i}] -Y_t^i &= \E_{L_i+1} \ln(\normH{\Delta_{L_i+t+1}}) +\ln\left(\frac{\normH{\Delta_{L_i+t}}}{\E_{L_i+t}\normH{\Delta_{L_i+t+1}}}\right) - \ln(\normH{\Delta_{L_i+t}}) \nonumber\\
    &\leq \ln(\E_{L_i+t}\normH{\Delta_{L_i+t+1}}) + \ln\left(\frac{1}{\E_{L_i+t}\normH{\Delta_{L_i+t+1}}}\right) =0 .\label{submartingale_property}
\end{align}
Due to the quadratic variation bound Lemma \ref{Quadratic_bound}, for $L_i+t < T_i$ we have,
\begin{align}
\E_{L_i+t}\left(\normH{\Delta_{L_i+t+1}}-\E_{L_i+t}\normH{\Delta_{L_i+t+1}}\right)^2 \leq 80\eta\frac{\beta}{\gamma\sqrt{\alpha}} \E_{L_i+t}\normHsq{\Delta_{L_i+t+1}}. \label{QB_2}
\end{align}
We use (\ref{QB_2}) to upper bound $\E_t\left(Y_{t+1}^{i}-Y_t^i\right)^2$, where $\E_t$ denotes the 
expectation conditioned on the past and assuming $Y_t^i$ is known. First note that if $\ind_{\mathcal{A}^i_t}=0$ or $\ind_{\mathcal{A}^i_{t-1}}=0$, then $Y^i_{t+1} = Y^i_{t}$, and therefore $\E_t\left(Y_{t+1}^{i}-Y_t^i\right)^2 =0$. Hence, it remains to consider the case when $\ind_{\mathcal{A}^i_t} = \ind_{\mathcal{A}^i_{t-1}}=1$,
\begin{align*}
\E_t\left(Y_{t+1}^{i}-Y_t^i\right)^2 &= \E_{L_i+t}\left(\ln\norm{\Delta_{L_i+t+1}}-\ln\norm{\Delta_{L_i+t}} + \ln \left(\frac{\normH{\Delta_{L_i+t}}}{\E_{L_i+t}\norm{\Delta_{L_i+t+1}}}\right) \right)^2\\
& = \E_{L_i+t}\left(\ln \left(\frac{\normH{\Delta_{L_i+t+1}}}{\E_{L_i+t}\norm{\Delta_{L_i+t+1}}}\right)\right)^2.
\end{align*}
Note that in the event of $\ind_{\mathcal{A}^i_t}=1$, by Lemma \ref{high_prob} we know that $\frac{1}{2} \leq \frac{\normH{\Delta_{L_i+t+1}}}{\E_{L_i+t}\norm{\Delta_{L_i+t+1}}} \leq 2 $, assuming that $n$ and $b$ satisfy the assumptions of Lemma \ref{high_prob}. Consider the following inequality for $p,q>0$ such that  $\frac{1}{2} \leq \frac{p}{q} \leq 2$,
\begin{align*}
    \left(\ln\left(\frac{p}{q}\right)\right)^2 \leq \max\left\{\ln\left(1+ \left(\frac{p}{q}-1\right)^2 \right),\ln\left(1+ \left(\frac{q}{p}-1\right)^2 \right)\right\}.
\end{align*}
\noindent With $p = \normH{\Delta_{L_i+t+1}}$ and $q =  \E_{L_i+t}\normH{\Delta_{L_i+t+1}}$, we use the above inequality to get,
\begin{align*}
   \E_t\left(Y_{t+1}^{i}-Y_t^i\right)^2 &\leq \E_{L_i+t}\left[\ln \left (1+4 \frac{\left(\normH{\Delta_{L_i+t+1}} -\E_{L_i+t}\normH{\Delta_{L_i+t+1}}\right)^2}{\left(\E_{L_i+t}\normH{\Delta_{L_i+t+1}}\right)^2} \right)\right]\\
   &\leq \ln \left( 1+ 320\eta\frac{\beta}{\gamma\sqrt{\alpha}}\right).
\end{align*}
where the last inequality is due to \eqref{QB_2}. Thus, we get the following predictable quadratic variation bound for random process $Y^i_t$,
\begin{align}    
    \E_t\left(Y_{t+1}^{i}-Y_t^i\right)^2 \leq \ln \left(1 + 320\eta\frac{\beta}{\gamma\sqrt{\alpha}}\right) \label{QB_3}.
\end{align}
Now we aim to upper bound $|Y_{t+1}^{i}-Y_{t}^i|$ for $t$ such that $L_i+t < T_i$. Again, in the events $\ind_{\mathcal{A}^i_t}=0$ or $\ind_{\mathcal{A}^i_{t-1}}=0$, we have $Y^i_{t} = Y^i_{t-1}$ and we are done. Considering the case $\ind_{\mathcal{A}^i_{t}}=\ind_{\mathcal{A}^i_{t-1}}=1$,
\begin{align*}
    Y_{t+1}^{i}- Y_{t}^i = \ln\left(\frac{\normH{\Delta_{L_i+t+1}}}{\E_{L_i+t}\normH{\Delta_{L_i+t+1}}}\right).
\end{align*}
Again due to having $\ind_{\mathcal{A}^i_{t}}=1$, we invoke Lemma \ref{high_prob} to get,
\begin{align*}
    \normH{\Delta_{L_i+t+1}} \leq \left(1+96\eta\omega\sqrt{\alpha}\cdot M\ln(n/b\delta)\right)\E_{L_i+t}\normH{\Delta_{L_i+t+1}},
\end{align*}
where $\omega^2 = \frac{n}{\kappa\alpha^2\gamma}$, and
\begin{align*}
    Y_{t+1}^{i}- Y_{t}^i \leq 2\ln\left(1+96\eta\omega\sqrt{\alpha}\cdot M\ln(n/b\delta)\right).
\end{align*}
Similarly, we get,
\begin{align*}
    Y_{t}^i- Y_{t+1}^{i} \leq 2\ln\left(1+96\eta\omega\sqrt{\alpha}\cdot M\ln(n/b\delta)\right).
\end{align*}
Combining the above upper bound property with (\ref{submartingale_property}) and (\ref{QB_3}), we get the following submartingale framework.
\begin{lemma}[Submartingale setup]{\label{submartingale_setup}}
 Let $n > \frac{(96)^2\beta^2\ln(n/b\delta)^2}{\gamma}\kappa$, $\epsilon_0 < \frac{1}{8e^2\sqrt{\alpha}}$, $ \eta = \frac{b\sqrt{\alpha}\beta}{n} \leq \min\left\{\frac{1}{4\sqrt{\alpha}},\frac{b}{48\alpha^{3/2}\kappa}\right\}$ and $b < \min\{\frac{n}{4\alpha\beta}, \frac{\gamma\cdot n}{(96)^2\beta^2\ln(n/b\delta)^2}\}$ for some $\beta >0$ and $\gamma>0$. Consider the random process defined as $Y_0^i = \ln(\normH{\Delta_{L_i}})$, and for $L_i+t <T_i$,
\begin{align*}
    Y_{t+1}^{i} &= \left(\ln(\normH{\Delta_{L_i+t+1}}) + \sum_{j=0}^{t}{\ln\left(\frac{\normH{\Delta_{L_i+j}}}{\E_{L_i+t}\normH{\Delta_{L_i+j+1}}}\right)}\right)\cdot\ind_{\mathcal{A}^i_t} + Y^i_t\cdot\ind_{\neg\mathcal{A}^i_t}.
\end{align*}
Then, letting $\omega^2 = \frac{n}{\kappa\alpha^2\gamma}$: 
\begin{align*}
\E_t[Y_{t+1}^{i}] &\leq Y_t^i, \\
|Y_{t+1}^{i}- Y_{t}^i| &\leq 2\ln\left(1+96\eta\omega\sqrt{\alpha}\cdot M\ln(n/b\delta)\right),\\
\E_t\left(Y_{t+1}^{i}-Y_t^i\right)^2 &\leq \ln \left(1 + 320\eta\frac{\beta}{\gamma\sqrt{\alpha}}\right).
\end{align*}
\end{lemma}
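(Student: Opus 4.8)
The plan is to read off the three inequalities directly from the recursive definition of $Y_t^i$, feeding in Lemma~\ref{Quadratic_bound} and Lemma~\ref{high_prob} (which themselves rest on Lemma~\ref{lma1} and Lemma~\ref{lma3}). First I would handle the degenerate cases: since the events $\mathcal{A}^i_t$ are nested, $\ind_{\mathcal{A}^i_t}=0$ or $\ind_{\mathcal{A}^i_{t-1}}=0$ forces $Y_{t+1}^i=Y_t^i$ by definition, so all three bounds are trivial there. On the remaining event $\ind_{\mathcal{A}^i_t}=\ind_{\mathcal{A}^i_{t-1}}=1$ the two telescoping correction sums in $Y_{t+1}^i$ and $Y_t^i$ collapse against each other, leaving the identity
\begin{align*}
Y_{t+1}^i-Y_t^i=\ln\!\left(\frac{\normH{\Delta_{L_i+t+1}}}{\E_{L_i+t}\normH{\Delta_{L_i+t+1}}}\right),
\end{align*}
and this one formula feeds all three estimates.

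For the submartingale property, apply $\E_t$ to the identity. As $\ln$ is concave and $q:=\E_{L_i+t}\normH{\Delta_{L_i+t+1}}$ is measurable with respect to the conditioning $\sigma$-algebra, Jensen gives $\E_{L_i+t}\ln\normH{\Delta_{L_i+t+1}}\le\ln q$, which the $-\ln q$ term cancels exactly, so $\E_t[Y_{t+1}^i]\le Y_t^i$. For the almost-sure increment bound, note that on $\mathcal{A}^i_t$ the two-sided estimate of Lemma~\ref{high_prob} (whose $n,b$ hypotheses match those assumed here) holds \emph{deterministically}, so $\normH{\Delta_{L_i+t+1}}/q\in[(1+s)^{-1},1+s]$ with $s=96\eta\omega\sqrt{\alpha}\,M\ln(n/\delta b)$; taking $\ln$ of the identity bounds $|Y_{t+1}^i-Y_t^i|$ by $\ln(1+s)\le 2\ln(1+s)$.

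For the predictable quadratic variation, square the identity and bound $(\ln x)^2$ pointwise for $x\in[\tfrac12,2]$ by $\max\{\ln(1+(x-1)^2),\ln(1+(1/x-1)^2)\}\le\ln(1+4(x-1)^2)$ (the last step uses $x\ge\tfrac12$), applied with $x=\normH{\Delta_{L_i+t+1}}/q$; this gives $(Y_{t+1}^i-Y_t^i)^2\le\ln\!\big(1+4(\normH{\Delta_{L_i+t+1}}-q)^2/q^2\big)$. Then take $\E_{L_i+t}$, push it inside by concavity of $\ln$ (Jensen), and apply Lemma~\ref{Quadratic_bound}, which bounds $\E_{L_i+t}(\normH{\Delta_{L_i+t+1}}-q)^2$ by $80\eta\tfrac{\beta}{\gamma\sqrt{\alpha}}\,q^2$; the right-hand side collapses to $\ln(1+320\eta\tfrac{\beta}{\gamma\sqrt{\alpha}})$, as claimed.

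In essence this lemma is a careful repackaging of Lemmas~\ref{Quadratic_bound} and~\ref{high_prob}, so the substance already lies in those; the only genuine care needed here is the bookkeeping with the indicators and stopping times — making sure the telescoping sums really do cancel on $\{\ind_{\mathcal{A}^i_t}=1\}$ for every $t$ with $L_i+t<T_i$, and that the high-probability conclusions of the earlier lemmas are invoked only after conditioning on $\mathcal{A}^i_t$, where they become deterministic. That is the step I would be most careful about; everything else is routine algebra with logarithms.
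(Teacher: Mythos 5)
Your proposal matches the paper's argument step for step: the reduction on $\{\ind_{\mathcal{A}^i_t}=1\}$ to the single-increment identity $Y_{t+1}^i-Y_t^i=\ln\bigl(\normH{\Delta_{L_i+t+1}}/\E_{L_i+t}\normH{\Delta_{L_i+t+1}}\bigr)$, Jensen for the submartingale property, the $(\ln x)^2\le\ln(1+4(x-1)^2)$ bound on $[\tfrac12,2]$ followed by Jensen and Lemma~\ref{Quadratic_bound} for the quadratic variation, and the deterministic two-sided estimate from Lemma~\ref{high_prob} (valid on $\mathcal{A}^i_t$) for the increment bound. This is exactly how the paper proceeds in Section~\ref{martingale_setup}, so there is nothing further to add.
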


\subsection{High probability convergence via martingale framework}{\label{convg_martingale}}
In the previous section, we constructed a submartingale framework satisfying a quadratic variation bound and high probability upper bound at every step. In this section, we invoke a well-known measure concentration result for martingales, Freedman's inequality stated in Theorem \ref{Freedman}, on our framework. We apply Theorem \ref{Freedman} on the random process $Y_t^{i}$. For brevity, we provide the analysis only for the case $b < \frac{8}{9}\kappa$. This means replacing $M$ by $\frac{\kappa}{b}$ in Lemma \ref{submartingale_setup}. The proof for the case $b \geq \frac{8}{9}\kappa$ follows along the similar line, by replacing $M$ with $\sqrt{\frac{\kappa}{b}}$.

\noindent We consider $R = 2\ln\left(1+\frac{96\eta\omega\sqrt{\alpha}\kappa\ln(n/b\delta)}{b}\right) $, $\sigma^2 = t\cdot\ln\left(1+320\eta\frac{\beta}{\gamma\sqrt{\alpha}}\right) $, fix $\lambda=1$, and apply Freedman's inequality on the random process $Y_t^i$ until $\normHsq{\Delta_t}$ does not satisfy any of stopping time criteria mentioned in the definition of $T_i$,  \eqref{random_times}. Consider two cases here,

\noindent\textbf{Case 1: $\min\left\{\frac{\lambda^2}{\sigma^2}, \frac{\lambda}{R}\right\} = \frac{\lambda^2}{\sigma^2}$}. Note that
\begin{align*}
    \sigma^2 &\leq t\cdot\ln\left(1+320\eta\frac{\beta}{\gamma\sqrt{\alpha}}\right)\\
    & \leq \frac{n}{b}\cdot\ln\left(1+320\eta\frac{\beta}{\gamma\sqrt{\alpha}}\right) \leq \frac{n}{b}\cdot320\eta\frac{\beta}{\gamma\sqrt{\alpha}},
\end{align*}
where in the last inequality we use $t\leq \frac{n}{b}$ and $\ln(1+x) <x$. We get,
\begin{align*}
    \exp\left(-\frac{1}{4}\cdot\frac{\lambda}{\sigma^2}\right) \leq \exp\left(-\frac{1}{4}\cdot\frac{1}{320\frac{n}{b}\frac{b\sqrt{\alpha\beta}}{n}\frac{\beta}{\gamma\sqrt{\alpha}}}\right)
    & = \exp\left(-\frac{1}{4}\cdot\frac{\gamma}{320\beta^2}\right) < \delta\frac{b}{n},
\end{align*}
where last inequality holds if $\gamma > 1280\beta^2\ln(n/b\delta)$.

\noindent \textbf{Case 2: $\min\left\{\frac{\lambda^2}{\sigma^2}, \frac{\lambda}{R}\right\} = \frac{\lambda}{R}$}. Note that
\begin{align*}
    R &= 2\ln\left(1+\frac{96\eta\omega\sqrt{\alpha}\kappa\ln(n/b\delta)}{b}\right)\\
    &<\frac{200\eta\omega\sqrt{\alpha}\kappa\ln(1/\delta)}{b},
\end{align*}
where in the last inequality we use $\ln(1+x)<x$. We get,
\begin{align*}
    \exp\left(-\frac{1}{200}\cdot\frac{\lambda}{R}\right)
    & \leq \exp\left(-\frac{1}{4}\cdot \frac{b}{200\eta\omega\sqrt{\alpha}\kappa\ln(n/b\delta)}\right).
\end{align*}
Substitute $\eta = \frac{b\sqrt{\alpha}}{n}\cdot\beta$ and $\omega = \sqrt{\frac{n}{\kappa\alpha^2\gamma}}$, we get,
\begin{align*}
    \exp\left(-\frac{1}{4}\cdot\frac{\lambda}{R}\right) &\leq \exp\left(-\frac{1}{4}\cdot \frac{b}{200\frac{b\sqrt{\alpha}\beta}{n}\sqrt{\frac{n}{\kappa\alpha^2\gamma}}\sqrt{\alpha}\kappa\ln(n/b\delta)}\right)\\
    &=\exp \left(-\frac{1}{4}\cdot \frac{\sqrt{n}\sqrt{\gamma}}{200\beta\sqrt{\kappa}\ln(n/b\delta)}\right) \leq \delta\frac{b}{n}.
\end{align*}
Letting $\gamma > 1280\beta^2\ln(n/b\delta),$ $n >400 \kappa\alpha^2\gamma\ln(n/b\delta)^2$, we get the failure probability less than $\delta\cdot\frac{b}{n}$. Combining both cases, we get the following powerful concentration guarantee.
\begin{theorem}[Freedman's concentration]{\label{freedman_concentration}}
    Let $n >400\kappa\alpha^2\gamma(\ln(n/b\delta))^2$, $\gamma > 1280\beta^2\ln(n/b\delta)$, $\epsilon_0 < \frac{1}{8e^2\sqrt{\alpha}}$, $\eta = \frac{b\sqrt{\alpha}\beta}{n} \leq \min\{\frac{1}{4\sqrt{\alpha}}, \frac{b}{48\alpha^{3/2}\kappa}\}$ and $b < \min\{\frac{n}{4\alpha\beta}, \frac{\gamma\cdot n}{(96)^2\beta^2\ln(n/b\delta)^2}\}$.  Then, for any $t\geq 0$ satisfying $L_i+t \leq T_i$:
\begin{align*}
        \ln\left(\frac{\normH{\Delta_{L_i+t}}}{\normH{\Delta_{L_i}}}\right) \leq -t\ln\left(\frac{1}{1-\rho}\right) + 1,
    \end{align*}
    where $\rho=\frac{\eta}{32\sqrt{\alpha}}$, 
    with probability at least $1-\delta\frac{b}{n}$.
\end{theorem}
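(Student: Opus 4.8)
The plan is to apply the submartingale version of Freedman's inequality (Theorem~\ref{Freedman}) to the process $Y_t^i$ assembled in Lemma~\ref{submartingale_setup}, and then decode the resulting tail bound on $Y_t^i$ into the claimed geometric decay of $\normH{\Delta_{L_i+t}}$. The first step is to note that on the event $\mathcal{A}^i_{t-1}$ all the indicator factors in the definition of $Y_t^i$ equal one, so that for $L_i+t\le T_i$ one has the exact identity
\[
Y_t^i \;=\; \ln\normH{\Delta_{L_i+t}} \;+\; \sum_{j=0}^{t-1}\ln\!\left(\frac{\normH{\Delta_{L_i+j}}}{\E_{L_i+j}\normH{\Delta_{L_i+j+1}}}\right).
\]
Because every iterate $\x_{L_i+j}$ with $L_i+j<T_i$ meets the hypotheses of Lemma~\ref{submartingale_prop}, we have $\E_{L_i+j}\normH{\Delta_{L_i+j+1}}\le(1-\rho)\normH{\Delta_{L_i+j}}$ with $\rho=\tfrac{\eta}{32\sqrt{\alpha}}$, hence each summand is at least $\ln\tfrac{1}{1-\rho}$ and, rearranging,
\[
\ln\normH{\Delta_{L_i+t}} \;\le\; Y_t^i \;-\; t\,\ln\tfrac{1}{1-\rho} \qquad\text{on }\mathcal{A}^i_{t-1}.
\]
So it is enough to show $Y_t^i\le Y_0^i+1$ uniformly over $t$ with $L_i+t\le T_i$, with probability at least $1-\delta b/n$, since $\mathcal{A}^i_{t-1}$ itself holds with that probability by Lemma~\ref{lma3} and a union bound over the at most $n/b$ inner iterations.

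Next I would feed Lemma~\ref{submartingale_setup} into Theorem~\ref{Freedman} with $\lambda=1$, taking $R=2\ln(1+96\eta\omega\sqrt{\alpha}M\ln(n/b\delta))$ for the almost-sure increment bound and $\sigma^2=\tfrac nb\ln(1+320\eta\beta/(\gamma\sqrt{\alpha}))$ for the predictable quadratic variation; the latter is legitimate because the per-step bound of Lemma~\ref{submartingale_setup} is deterministic and $T_i\le n/b$, so the quadratic-variation side condition in Theorem~\ref{Freedman} holds automatically. This gives
\[
\Pr\!\Big(\exists\, t\le n/b:\ Y_t^i> Y_0^i+1\Big)\ \le\ \exp\!\Big(-\tfrac14\min\{\,1/\sigma^2,\ 1/R\,\}\Big),
\]
and it remains to verify that, under the hypotheses, each branch of the minimum drives the right-hand side below $\delta b/n$. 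For the $1/\sigma^2$ branch, substituting $\eta=\tfrac{b\sqrt{\alpha}\beta}{n}$ makes $\sigma^2$ collapse to $320\beta^2/\gamma$, independent of $n$ and $b$, so $\exp(-\gamma/(1280\beta^2))\le\delta b/n$ exactly when $\gamma>1280\beta^2\ln(n/b\delta)$. For the $1/R$ branch, using $\ln(1+x)<x$ and then substituting $\eta=\tfrac{b\sqrt{\alpha}\beta}{n}$, $\omega=\sqrt{n/(\kappa\alpha^2\gamma)}$ and $M=\kappa/b$ (for $b<\tfrac{8}{9}\kappa$; the case $b\ge\tfrac{8}{9}\kappa$ is identical with $M=\sqrt{\kappa/b}$), $1/R$ becomes of order $\sqrt{n\gamma}/(\beta\sqrt{\kappa}\ln(n/b\delta))$, which exceeds $4\ln(n/b\delta)$ precisely when $n$ satisfies a bound of the form $n\gtrsim\kappa\alpha^2\gamma(\ln(n/b\delta))^2$ --- this is covered by the stated condition $n>400\kappa\alpha^2\gamma(\ln(n/b\delta))^2$. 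Intersecting the Freedman excursion event with the failure of $\mathcal{A}^i_{t-1}$ (and rescaling $\delta$ by a harmless constant to absorb the factor of two) yields the asserted probability $1-\delta b/n$, and combining with the first paragraph gives $\ln(\normH{\Delta_{L_i+t}}/\normH{\Delta_{L_i}})\le 1-t\ln\tfrac{1}{1-\rho}$ for all $t$ with $L_i+t\le T_i$.

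I expect the main obstacle to be the bookkeeping around the ``freezing'' indicators $\ind_{\mathcal{A}^i_t}$ baked into $Y_t^i$: they are exactly what make $Y_t^i$ an honest submartingale with \emph{deterministic} variation and increment bounds, so that Theorem~\ref{Freedman} applies without circularity, but the quantity we ultimately care about, $\ln\normH{\Delta_{L_i+t}}$, only coincides with the clean expansion of $Y_t^i$ on $\mathcal{A}^i_{t-1}$. Keeping straight which events are being intersected --- the Freedman excursion event, the failure of $\mathcal{A}^i_{t-1}$, and the running premise $L_i+t\le T_i$ (which guarantees every visited iterate lies in $\mathcal{U}_f(e^2\epsilon_0\eta)$ and above the $\tfrac{\kappa\alpha^2\gamma}{n}$ threshold, validating Lemmas~\ref{submartingale_prop}, \ref{Quadratic_bound}, and~\ref{high_prob}) --- and confirming the total failure probability is $\le\delta b/n$ rather than a larger multiple is where the care is needed. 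The remainder is the two-case tail computation displayed just above the statement, i.e.\ routine substitution of $\eta$, $\omega$, $M$ and matching powers of $\log(n/b\delta)$.
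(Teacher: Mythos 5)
Your proposal is correct and follows essentially the same route as the paper: apply the submartingale Freedman inequality to $Y_t^i$ from Lemma~\ref{submartingale_setup} with $\lambda=1$, run the two-branch tail computation ($1/\sigma^2$ vs.\ $1/R$) with the substitutions $\eta=b\sqrt{\alpha}\beta/n$ and $\omega=\sqrt{n/(\kappa\alpha^2\gamma)}$ to get the excursion probability below $\delta b/n$, then strip the indicators on the event $\mathcal{A}^i_{t-1}$ (which holds with probability $\geq 1-\delta b/n$ by Lemma~\ref{lma3} and a union bound), invoke Lemma~\ref{submartingale_prop} to lower-bound each summand $\ln\bigl(\normH{\Delta_{L_i+j}}/\E_{L_i+j}\normH{\Delta_{L_i+j+1}}\bigr)$ by $\ln\frac{1}{1-\rho}$, and rearrange; the paper simply places the two-case Freedman computation in the paragraph preceding the theorem rather than inside the proof body. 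The only small imprecision is the phrase ``precisely when $n\gtrsim\kappa\alpha^2\gamma(\ln(n/b\delta))^2$'' for the $1/R$ branch: the $\alpha$-factors cancel in that computation (as the paper's own display shows, $1/R$ collapses to $\sqrt{n\gamma}/(200\beta\sqrt{\kappa}\ln(n/b\delta))$), so the stated condition is sufficient but not the exact threshold --- you do acknowledge this by saying it is ``covered by'' the hypothesis, so the logic is sound.
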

\begin{proof}
        By Theorem \ref{Freedman}, we have $Y_t \leq Y_0 + 1$ with probability at least $1-\delta\frac{b}{n}$. This implies,
        \begin{align}
        \left(\ln(\normH{\Delta_{L_i+t+1}}) + \sum_{j=0}^{t}{\ln\left(\frac{\normH{\Delta_{L_i+t}}}{\E_{L_i+t}\normH{\Delta_{L_i+t+1}}}\right)}\right)\cdot\ind_{\mathcal{A}^i_t} + Y^i_t\cdot\ind_{\neg\mathcal{A}^i_t} \leq \ln\left(\normH{\Delta_{L_i}}\right) + 1. \label{free_con_1}
    \end{align}
        Since $L_i + t \leq T_i$, 
        we have $\x_{L_i+t-1} \in \mathcal{U}_f(e^2\epsilon_0\eta)$ and also $\x_{L_i} \in  \mathcal{U}_f(e^2\epsilon_0\eta) $. By Lemma \ref{lma3}, and applying the union bound for $t$ inner iterations starting from $\x_{L_i}$ we have $\Pr\left(\mathcal{A}^i_t\right) \geq 1-t\delta\frac{b^2}{n^2} \geq 1- \delta\frac{b}{n}$. Combining this with (\ref{free_con_1}) and rescaling $\delta$ by a factor of $2$, we get with probability at least $1-\delta\frac{b}{n}$,
         \begin{align*}
        \left(\ln(\normH{\Delta_{L_i+t+1}}) + \sum_{j=0}^{t}{\ln\left(\frac{\normH{\Delta_{L_i+j}}}{\E_{L_i+j}\normH{\Delta_{L_i+j+1}}}\right)}\right)  \leq \ln\left(\normH{\Delta_{L_i}}\right) + 1.
    \end{align*}
    By Lemma \ref{submartingale_prop}, with $\rho=\frac{\eta}{32\sqrt{\alpha}}$, we have $ \ln\left(\frac{1}{1-\rho}\right) < \ln\left(\frac{\normH{\Delta_{L_i+j}}}{\E_{L_i+j}\normH{\Delta_{L_i+j+1}}}\right)$, for any $j$ such that $L_i+j \leq T_i$. So, with probability at least $1-\delta\frac{b}{n}$,
    \begin{align*}
        \left(\ln(\normH{\Delta_{L_i+t+1}}) + (t+1)\cdot\ln \left(\frac{1}{1-\rho}\right)\right) \leq \ln\left(\normH{\Delta_{L_i}}\right) + 1,
    \end{align*}
    which concludes the proof.
    \ifdocenter
\else
\qed
\fi
\end{proof}

\begin{remark}
We make the following remarks about the Freedman's concentration results (Theorem~\ref{freedman_concentration}):
\begin{enumerate}[(a)]
    \item An upper bound on $\normHsq{\Delta_{T_i}}$,
    \begin{align}
        \Pr\left(\normHsq{\Delta_{T_i}} \leq e^2 \normHsq{\Delta_{L_i}}\right) \geq 1-\delta\frac{b}{n}. \label{Freedman_conclusion_1}
    \end{align}
    This holds since we run the submartingale $Y_t^i$ till $L_i+t=T_i$. Applying the Freedman concentration for $t$ such that $L_i+t=T_i$ yields (\ref{Freedman_conclusion_1});
    \item In particular for $i=0$, we have,
    \begin{align}
        \Pr\left(\normHsq{\Delta_{T_0}} \leq e^2 \normHsq{\T\Delta}\right) \geq 1-\delta\frac{b}{n}; \label{Freedman_conclusion_2}
    \end{align}

    \item Suppose $T_0 < \frac{n}{b}$, by combining the definition of $T_0$ \eqref{random_times} and  \eqref{Freedman_conclusion_2} we get,
    \begin{align}
        \Pr\left(\normHsq{\Delta_{T_0}} \leq \frac{\kappa\alpha^2\gamma}{n} \normHsq{\T\Delta} \ | T_0 < \frac{n}{b} \ \right) \geq 1-\delta\frac{b}{n}.\label{Freedman_conclusion_3}
    \end{align}
\end{enumerate}
\end{remark}

The next two results show that for any stopping time $T_i$, $\normHsq{\Delta_{T_i}} \leq 2e^2\frac{\kappa\alpha^2\gamma}{n}\normHsq{\T\Delta}$ with high probability, and, moreover, if $T_i < \frac{n}{b}$ then $\normHsq{\Delta_{T_i}} < \frac{\kappa\alpha^2\gamma}{n}\normHsq{\T\Delta}$. We start by proving the result for the first stopping time $T_0$.

\begin{lemma}[High probability result for first stopping time]{\label{first_stopping time}}
Let the conditions of Theorem \ref{freedman_concentration} hold. Then:
\begin{align*}
    \Pr\left(\normHsq{\Delta_{T_0}} \leq 2e^2\frac{\kappa\alpha^2\gamma}{n}\normHsq{\T\Delta}  \right) \geq 1-\delta\frac{b}{n},
\end{align*}    
where $\beta \geq 32\ln\left(\frac{n}{2\alpha^2\kappa}\right)$, and $\gamma \geq 1280\beta^2\ln\left(n/b\delta\right)$.
\end{lemma}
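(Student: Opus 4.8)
The plan is to run the submartingale concentration bound of Theorem~\ref{freedman_concentration} over the interval from the resume time $L_0=0$ to the first stopping time $T_0$, and then to split according to whether $T_0=\frac nb$ or $T_0<\frac nb$; a single application of the concentration inequality will cover both cases. Recall from \eqref{random_times} that $T_0\le \frac nb$ always. Applying Theorem~\ref{freedman_concentration} with $i=0$ and $L_0=0$, we get that with probability at least $1-\delta\frac bn$, the bound
\begin{align*}
\ln\!\left(\frac{\normH{\Delta_{t}}}{\normH{\T\Delta}}\right)\;\le\; -\,t\,\ln\!\left(\frac1{1-\rho}\right)+1,\qquad \rho=\frac{\eta}{32\sqrt\alpha}=\frac{b\beta}{32n},
\end{align*}
holds for all $t\le T_0$ (the identity for $\rho$ uses $\eta=\tfrac{b\sqrt\alpha\,\beta}{n}$); in particular it holds at $t=T_0$. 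Call this event $G$; since $\Pr(G)\ge 1-\delta\frac bn$, it suffices to show that $G$ implies the claimed bound on $\normHsq{\Delta_{T_0}}$.

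On $G$, first suppose $T_0=\frac nb$. Using $\ln\tfrac1{1-\rho}\ge\rho$ gives $-T_0\ln\tfrac1{1-\rho}+1\le -\tfrac nb\cdot\tfrac{b\beta}{32n}+1 = 1-\tfrac\beta{32}$, hence $\normHsq{\Delta_{T_0}}\le e^{2-\beta/16}\normHsq{\T\Delta}$. Now $e^{2-\beta/16}\le 2e^2\tfrac{\kappa\alpha^2\gamma}{n}$ is equivalent to $\tfrac\beta{16}\ge\ln\!\big(\tfrac{n}{2\kappa\alpha^2\gamma}\big)$, and since $\gamma\ge 1280\beta^2\ln(n/b\delta)\ge 1$ and $n>400\kappa\alpha^2\gamma(\ln(n/b\delta))^2\ge 2\kappa\alpha^2$ (from the hypotheses of Theorem~\ref{freedman_concentration}), we have $0\le\ln\!\big(\tfrac{n}{2\kappa\alpha^2\gamma}\big)\le\ln\!\big(\tfrac{n}{2\kappa\alpha^2}\big)$; the assumption $\beta\ge 32\ln\!\big(\tfrac{n}{2\alpha^2\kappa}\big)$ then gives $\tfrac\beta{16}\ge 2\ln\!\big(\tfrac{n}{2\kappa\alpha^2}\big)\ge\ln\!\big(\tfrac{n}{2\kappa\alpha^2\gamma}\big)$, as needed. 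If instead $T_0<\frac nb$, then $T_0\ge 1$ and $\ln\tfrac1{1-\rho}>0$, so the bound at $t=T_0$ yields only $\ln(\normH{\Delta_{T_0}}/\normH{\T\Delta})\le 1$, i.e.\ $\normHsq{\Delta_{T_0}}\le e^2\normHsq{\T\Delta}$; but since $T_0<\frac nb$, the definition of $T_0$ in \eqref{random_times} forces $\normHsq{\Delta_{T_0}}>e^2\normHsq{\T\Delta}$ or $\normHsq{\Delta_{T_0}}<\tfrac{\kappa\alpha^2\gamma}{n}\normHsq{\T\Delta}$, and the first alternative is now excluded, so $\normHsq{\Delta_{T_0}}<\tfrac{\kappa\alpha^2\gamma}{n}\normHsq{\T\Delta}\le 2e^2\tfrac{\kappa\alpha^2\gamma}{n}\normHsq{\T\Delta}$.

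In both cases, on $G$ we obtain $\normHsq{\Delta_{T_0}}\le 2e^2\tfrac{\kappa\alpha^2\gamma}{n}\normHsq{\T\Delta}$, which finishes the argument. The only delicate points are (i) checking that the hypotheses of Theorem~\ref{freedman_concentration} are in force so the concentration bound can be invoked (they are, by assumption), and (ii) verifying that the constant $32$ in the hypothesis $\beta\ge 32\ln(n/2\alpha^2\kappa)$ is large enough to absorb, simultaneously, the $e^2$ factor and the $\kappa\alpha^2\gamma/n$ factor in the case $T_0=\frac nb$ --- which uses $\gamma\ge 1$ and $n\ge 2\kappa\alpha^2$ as just noted. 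I expect step (ii) (the bookkeeping of constants and logarithms) to be the main, if modest, obstacle; the probabilistic content is entirely inherited from Theorem~\ref{freedman_concentration}.
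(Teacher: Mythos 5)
Your proof is correct and takes essentially the same approach as the paper: apply the Freedman concentration of Theorem~\ref{freedman_concentration} to the submartingale starting at $L_0=0$, then case split on whether $T_0=n/b$ (direct computation using the choice of $\beta$, $\gamma$) or $T_0<n/b$ (where the concentration bound rules out the upper escape in the definition of $T_0$, forcing $\normHsq{\Delta_{T_0}}<\tfrac{\kappa\alpha^2\gamma}{n}\normHsq{\T\Delta}$). Your framing of working directly on the good event $G$ rather than decomposing via conditional probabilities $\Pr(\cdot\,|\,T_0=n/b)$ and $\Pr(\cdot\,|\,T_0<n/b)$ as the paper does is a cosmetic, arguably cleaner, variant of the same argument.
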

\begin{proof} By conditioning on the first stopping time, whether $T_0 < \frac{n}{b}$ or $T_0 = \frac{n}{b}$, and using law of total probability, it follows that,
  \begin{align*}
    \Pr\left(\normHsq{\Delta_{T_0}} \leq 2e^2\frac{\kappa\alpha^2\gamma}{n}\normHsq{\T\Delta}\right) &= \Pr\left(\normHsq{\Delta_{T_0}} \leq 2e^2\frac{\kappa\alpha^2\gamma}{n}\normHsq{\T\Delta} \ | T_0 = \frac{n}{b}\right)\Pr\left(T_0 = \frac{n}{b}\right) \\
    &+ \Pr\left(\normHsq{\Delta_{T_0}} \leq 2e^2\frac{\kappa\alpha^2\gamma}{n}\normHsq{\T\Delta} \ | T_0 < \frac{n}{b}\right)\Pr\left(T_0 < \frac{n}{b}\right).
\end{align*}
Note that we bound the second term above by \eqref{Freedman_conclusion_3}. We proceed to bound the first term. Consider the submartingale $Y_t^0$ running till stopping time $T_0$. If $ T_0=\frac {n}{b}$, then this implies that the submartingale $Y_t^0$ continued for the full outer iteration. We apply the result of Theorem \ref{freedman_concentration} on $Y_t^0$ to get, with probability at least $1-\delta\frac{b}{n}$,
\begin{align*}
    \ln\left(\frac{\normH{\Delta_{n/b}}}{\normH{\T\Delta}}\right) \leq -\frac{n}{b}\ln\left(\frac{1}{1-\rho}\right) + 1,
\end{align*}
which implies,
\begin{align*}
    \ln\left(\frac{\normHsq{\Delta_{n/b}}}{\normHsq{\T\Delta}}\right) &\leq -\frac{2n}{b}\ln\left(\frac{1}{1-\rho}\right) + 2 \quad \Rightarrow \quad \normHsq{\Delta_{n/b}} &\leq e^2\left(1-\rho\right)^{2n/b}\normHsq{\T\Delta}.
\end{align*}
For $\beta = 32\ln\left(\frac{n}{2\alpha^2\kappa}\right) $ and $\gamma = 1280\beta^2\ln\left(n/b\delta\right)$, it follows that $\left(1-\rho\right)^{2n/b} \leq 2\frac{\kappa\alpha^2\gamma}{n}$,
which concludes the proof.
\ifdocenter
\else
\qed
\fi
\end{proof}

Since there can be more than one stopping time, we prove a result similar to Lemma \ref{first_stopping time} for stopping times occurring after the first stopping time. Note that this requires conditioning on the previous stopping time. As we already have an unconditional high probability result for $T_0$ in Lemma \ref{first_stopping time}, having conditioning on previous stopping time allows us to establish a high probability result for any subsequent stopping time.
\begin{lemma}[High probability result for non-first stopping time]{\label{nonfirst_stopping_time}}
Let the conditions of Theorem \ref{freedman_concentration} hold. Then, for any $i\geq 0$:
     \begin{align*}
         \Pr\left(\normHsq{\Delta_{T_{i+1}}} \leq 2e^2 \frac{\kappa\alpha^2\gamma}{n}\normHsq{\T\Delta} \ | \normHsq{\Delta_{T_{i}}} < \frac{\kappa\alpha^2\gamma}{n}\normHsq{\T\Delta}\right) \geq 1-\delta\frac{b}{n},
    \end{align*}
 where $\beta \geq 32\ln\left(\frac{n}{2\alpha^2\kappa}\right) $ and $\gamma \geq 1280\beta^2\ln\left(n/b\delta\right)$.
\end{lemma}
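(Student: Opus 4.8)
The proof parallels that of Lemma~\ref{first_stopping time}, with one extra step forced on us by the fact that the submartingale $Y^{i+1}_t$ is launched at the \emph{resume} time $L_{i+1}$ rather than at $\T\x$ itself: before we can run a Freedman concentration argument from $L_{i+1}$, we must know that $\normHsq{\Delta_{L_{i+1}}}$ is already small. So I would split the argument into two stages. Stage~(i): conditioning on $\normHsq{\Delta_{T_i}} < \frac{\kappa\alpha^2\gamma}{n}\normHsq{\T\Delta}$, show $\normHsq{\Delta_{L_{i+1}}} \leq 2\frac{\kappa\alpha^2\gamma}{n}\normHsq{\T\Delta}$ with high probability. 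Stage~(ii): apply Theorem~\ref{freedman_concentration} with index $i+1$ on the interval $[L_{i+1},T_{i+1}]$, which multiplies the Stage-(i) bound by at most $e^2$, yielding $\normHsq{\Delta_{T_{i+1}}} \leq 2e^2\frac{\kappa\alpha^2\gamma}{n}\normHsq{\T\Delta}$.

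\textbf{Stage (i): controlling the error at the resume time.} Condition on $\normHsq{\Delta_{T_i}} < \frac{\kappa\alpha^2\gamma}{n}\normHsq{\T\Delta}$. By \eqref{from_T_to_L}, for all $T_i \leq t < L_{i+1}$ we have $\normHsq{\Delta_t} < \frac{\kappa\alpha^2\gamma}{n}\normHsq{\T\Delta}$; in particular $\normHsq{\Delta_{L_{i+1}-1}} < \frac{\kappa\alpha^2\gamma}{n}\normHsq{\T\Delta}$, and since $n \gtrsim \kappa\alpha^2\gamma$ this also places $\x_{L_{i+1}-1}$ inside $\mathcal{U}_f(e^2\epsilon_0\eta)$. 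Now decompose the resume step exactly as at the start of the proof of Lemma~\ref{high_prob}: $\normH{\Delta_{L_{i+1}}} \leq \normH{\Delta_{L_{i+1}-1} - \eta\hat{\Hi}^{-1}\g_{L_{i+1}-1}} + \eta\normH{\hat{\Hi}^{-1}(\bar\g_{L_{i+1}-1} - \g_{L_{i+1}-1})}$. The first (deterministic) term is an approximate Newton step, so by Lemma~\ref{lma2} it is at most $\normH{\Delta_{L_{i+1}-1}}$; the second term, by Lemma~\ref{lma3} with $c=e^2$, is at most $6\eta\sqrt{\alpha}\,M\ln(n/b\delta)\,(\normH{\Delta_{L_{i+1}-1}} + \normH{\T\Delta})$ with probability at least $1-\delta b^2/n^2$ (union over all iterations gives the stated form at the random index $L_{i+1}-1$). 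Using $\normH{\Delta_{L_{i+1}-1}} < \sqrt{\kappa\alpha^2\gamma/n}\,\normH{\T\Delta} \leq \normH{\T\Delta}$, the noise coefficient reduces to $12\eta\sqrt{\alpha}\,M\ln(n/b\delta)$, which for $\eta = \frac{b\sqrt{\alpha}\beta}{n}$ and $M\leq\max\{\kappa/b,\sqrt{\kappa/b}\}$ is $O\!\left(\frac{\alpha\kappa\beta\ln(n/b\delta)}{n} + \frac{\alpha\beta\sqrt{b\kappa}\ln(n/b\delta)}{n}\right)$; under the hypotheses $n > 400\kappa\alpha^2\gamma(\ln(n/b\delta))^2$, $\gamma \geq 1280\beta^2\ln(n/b\delta)$, and $b < \frac{\gamma n}{(96)^2\beta^2\ln(n/b\delta)^2}$ this is smaller than $(\sqrt{2}-1)\sqrt{\kappa\alpha^2\gamma/n}$, so $\normH{\Delta_{L_{i+1}}} \leq \sqrt{2}\,\sqrt{\kappa\alpha^2\gamma/n}\,\normH{\T\Delta}$, i.e.\ $\normHsq{\Delta_{L_{i+1}}} \leq 2\frac{\kappa\alpha^2\gamma}{n}\normHsq{\T\Delta}$. (If no resume actually occurs, i.e.\ $L_{i+1}=n/b$, then $T_{i+1}=n/b=L_{i+1}$ and this very bound applied at index $n/b-1$ already proves the claim.)

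\textbf{Stage (ii): Freedman from $L_{i+1}$ to $T_{i+1}$.} Assume now $L_{i+1} < n/b$. From Stage~(i), $\normHsq{\Delta_{L_{i+1}}} \leq 2\frac{\kappa\alpha^2\gamma}{n}\normHsq{\T\Delta} \leq e^2\normHsq{\T\Delta}$ (using $n\gtrsim\kappa\alpha^2\gamma$), so $\x_{L_{i+1}}\in\mathcal{U}_f(e^2\epsilon_0\eta)$ and the submartingale setup of Lemma~\ref{submartingale_setup} applies to $Y^{i+1}_t$ on $[L_{i+1},T_{i+1}]$. Applying Theorem~\ref{freedman_concentration} with index $i+1$ and $t = T_{i+1}-L_{i+1}$, with probability at least $1-\delta b/n$ we get $\ln\!\left(\normH{\Delta_{T_{i+1}}}/\normH{\Delta_{L_{i+1}}}\right) \leq -(T_{i+1}-L_{i+1})\ln\frac{1}{1-\rho} + 1 \leq 1$, hence $\normHsq{\Delta_{T_{i+1}}} \leq e^2\normHsq{\Delta_{L_{i+1}}} \leq 2e^2\frac{\kappa\alpha^2\gamma}{n}\normHsq{\T\Delta}$. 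A union bound over the Stage-(i) failure event and the Stage-(ii) failure event, followed by rescaling $\delta$ by a constant, gives the claimed $1-\delta b/n$ guarantee.

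\textbf{Main obstacle.} The crux is Stage~(i): the resume transition $L_{i+1}-1 \to L_{i+1}$ starts from an iterate lying \emph{below} the blue line $\frac{\kappa\alpha^2\gamma}{n}\normHsq{\T\Delta}$, precisely the regime where none of the earlier one-step lemmas apply directly --- Lemmas~\ref{submartingale_prop} and \ref{high_prob} (and Lemma~\ref{BV}) all assume $\normHsq{\Delta_t}\geq\frac{\kappa\alpha^2\gamma}{n}\normHsq{\T\Delta}$ so that $\normH{\T\Delta}$ can be absorbed into $\normH{\Delta_t}$. One therefore has to redo a bespoke one-step estimate that keeps $\normH{\T\Delta}$ explicit and verifies that the stochastic-gradient noise it carries is negligible against $\sqrt{\kappa\alpha^2\gamma/n}\,\normH{\T\Delta}$ under the sample-size assumption. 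The rest --- the conditioning bookkeeping around the stopping/resume times, the case $L_{i+1}=n/b$, and the single-step case $T_{i+1}-L_{i+1}=1$ --- is routine once this estimate is secured.
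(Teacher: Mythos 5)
Your proof is correct and follows the same two-stage route as the paper (bound $\normHsq{\Delta_{L_{i+1}}}$ at the resume time, then run Freedman from $L_{i+1}$ to $T_{i+1}$), and the "main obstacle" you flag is real: you are right that none of Lemma~\ref{submartingale_prop}, Lemma~\ref{BV}, or Lemma~\ref{high_prob} can be applied at $t=L_{i+1}-1$, because all three carry the hypothesis $\normHsq{\Delta_t}\geq\frac{\kappa\alpha^2\gamma}{n}\normHsq{\T\Delta}$, which fails there by construction. The paper's own proof simply writes ``due to one step high probability bound in Lemma~\ref{high_prob} we have $\normHsq{\Delta_{L_{i+1}}}\leq 2\frac{\kappa\alpha^2\gamma}{n}\normHsq{\T\Delta}$,'' which as a literal invocation of the cited lemma is not justified; the lemma's conclusion is also a multiplicative sandwich around $\E_t\normH{\Delta_{t+1}}$, which does not by itself give the claimed absolute bound. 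Your bespoke one-step estimate — keeping $\normH{\T\Delta}$ explicit, using Lemma~\ref{lma2} for the deterministic part, Lemma~\ref{lma3} for the noise, and then checking directly under $n>400\kappa\alpha^2\gamma\ln^2(\cdot)$, $\gamma\geq 1280\beta^2\ln(\cdot)$, and $b<\gamma n/(96^2\beta^2\ln^2(\cdot))$ that the noise coefficient is dominated by $\sqrt{\kappa\alpha^2\gamma/n}$ — is exactly the missing calculation, and your constants check out (the $6\eta\sqrt\alpha M\ln(\cdot)$ prefactor matches the paper's, and the $(\sqrt2-1)$ margin is comfortable). You also correctly handle the $L_{i+1}=n/b$ corner case by applying the same one-step bound at $t=n/b-1$ together with property~\eqref{from_T_to_L}. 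In short: same route as the paper, but your Stage~(i) is more rigorous than the paper's one-line citation of Lemma~\ref{high_prob}, which you rightly identify as inapplicable as stated.
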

\begin{proof}
      First, note that as $\normHsq{\Delta_{T_{i}}} < \frac{\kappa\alpha^2\gamma}{n}\normHsq{\T\Delta}$, we have  $\normHsq{\Delta_{L_{i+1}-1}} < \frac{\kappa\alpha^2\gamma}{n}\normHsq{\T\Delta} $. Now due to one step high probability bound in Lemma \ref{high_prob} we have $\normHsq{\Delta_{L_{i+1}}} \leq 2\frac{\kappa\alpha^2\gamma}{n}\normHsq{\T\Delta}$. Similar to Lemma \ref{first_stopping time} we show that,
 \begin{align}
      \Pr\left(\normHsq{\Delta_{T_{i+1}}} \leq 2e^2 \frac{\kappa\alpha^2\gamma}{n}\normHsq{\T\Delta} \ | \normHsq{\Delta_{T_{i}}} < \frac{\kappa\alpha^2\gamma}{n}\normHsq{\T\Delta}, T_{i+1} =n/b\right) \geq 1-\delta\frac{b}{n}, \label{non_first_1}
  \end{align}
and,
  \begin{align}
      \Pr\left(\normHsq{\Delta_{T_{i+1}}} \leq \frac{\kappa\alpha^2\gamma}{n}\normHsq{\T\Delta} \ | \normHsq{\Delta_{T_{i}}} < \frac{\kappa\alpha^2\gamma}{n}\normHsq{\T\Delta}, T_{i+1} <n/b\right) \geq 1-\delta\frac{b}{n}. \label{non_first_2}
  \end{align}
 Now we invoke the Freedman concentration result Theorem \ref{freedman_concentration} on the sub-martingale $Y_t^{i+1}$, to get with probability at least $1-\delta\frac{b}{n}$,
     \begin{align*}
         \normHsq{\Delta_{T_{i+1}}} \leq e^2\normHsq{\Delta_{L_{i+1}}},
     \end{align*}
proving that 
     \begin{align*}
         \normHsq{\Delta_{T_{i+1}}} \leq 2e^2\frac{\kappa\alpha^2\gamma}{n}\normHsq{\T\Delta}.
     \end{align*}
     Note that this condition does not depend on the value of $T_{i+1}$. In particular, for $T_{i+1} =  \frac {n}{b}$ we get,
     \begin{align*}
         \Pr\left(\normHsq{\Delta_{T_{i+1}}} \leq 2e^2\frac{\kappa\alpha^2\gamma}{n}\normHsq{\T\Delta} \ | \normHsq{\Delta_{T_{i}}} \leq \frac{\kappa\alpha^2\gamma}{n}\normHsq{\T\Delta}, T_{i+1} = \frac{n}{b}\right) \geq 1-\delta\frac{b}{n},
    \end{align*}
    establishing the inequality (\ref{non_first_1}).
    Next, to prove the inequality (\ref{non_first_2}), observe that $T_{i+1} < \frac{n}{b}$ implies that either $\normHsq{\Delta_{T_{i+1}}} > 2e^2\normHsq{\T\Delta}$ or  $\normHsq{\Delta_{T_{i+1}}} < \frac{\kappa\alpha^2\gamma}{n}\normHsq{\T\Delta}$. We have already shown that with probability at least $1-\delta\frac{b}{n}$,
    \begin{align*}
        \normHsq{\Delta_{T_{i+1}}} &\leq 2e^2\frac{\kappa\alpha^2\gamma}{n}\normHsq{\T\Delta}
        < 2e^2\normHsq{\T\Delta}.
    \end{align*}
    Combining this with $T_{i+1}<\frac{n}{b}$ we get, 
       \begin{align*}
        \normHsq{\Delta_{T_{i+1}}} < \frac{\kappa\alpha^2\gamma}{n}\normHsq{\T\Delta},
    \end{align*}
obtaining that
\begin{align*}
        \Pr\left(\normHsq{\Delta_{T_{i+1}}} < \frac{\kappa\alpha^2\gamma}{n}\normHsq{\T\Delta} \ | \normHsq{\Delta_{T_{i}}} < \frac{\kappa\alpha^2\gamma}{n}\normHsq{\T\Delta}, T_{i+1} < \frac{n}{b}\right) \geq 1-\delta\frac{b}{n}.
    \end{align*}  
which concludes the proof.
\ifdocenter
\else
\qed
\fi
\end{proof}

Note that for the $\beta$ and $\gamma$ values prescribed in Lemma \ref{first_stopping time} and the lower bound on $n$ from Theorem \ref{freedman_concentration}, a sufficient condition on $n$ can be stated as: $n> c_1\kappa\alpha^2\ln(2c_1\kappa\alpha^2/\delta)^7$ for $c_1 = 400\cdot1280\cdot(32)^2$.
We are now ready to prove our main convergence result.

\begin{theorem}[High probability convergence over outer iterates]{\label{final_result}}
Let $n> c_1\kappa\alpha^2\ln(2c_1\kappa\alpha^2/\delta)^7$, and $\epsilon_0 < \frac{1}{8e^2\sqrt{\alpha}}$. 
Let $\T\x_0 \in \mathcal{U}_f(\epsilon_0\eta)$, $\eta = \frac{b\sqrt{\alpha}\beta}{n} $ and $b < \frac{n}{c_2\alpha\ln(n/\kappa)}$. Then with probability at least $1-2\delta$:
\begin{align*}
    f(\T\x_{s}) -f(\x^*) \leq \left(\frac{c_3\kappa\alpha^2\ln(n/\delta)^2}{n}\right)^{s}\cdot \left(f(\T\x_0) - f(\x^*)\right),
\end{align*}
for absolute constants $c_1,c_2$ and $c_3$.
\end{theorem}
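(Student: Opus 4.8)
The plan is to reduce the statement to a single outer iteration of \texttt{Mb-SVRN} and then assemble the stopping-time machinery of Sections~\ref{High_probability_analysis}--\ref{convg_martingale}. Since $\T\x_0\in\mathcal{U}_f(\epsilon_0\eta)$ and, by the Lipschitz-Hessian part of Assumption~\ref{assumption}, $f(\x)-f(\x^*)$ equals $\tfrac12\normHsq{\x-\x^*}$ up to terms that are negligible throughout $\mathcal{U}_f(\epsilon_0\eta)$, it suffices to show that one outer iteration started at any $\T\x=\T\x_s\in\mathcal{U}_f(\epsilon_0\eta)$, run with $\beta=32\ln(n/2\alpha^2\kappa)$, $\gamma=1280\beta^2\ln(n/b\delta)$ and $\eta=b\sqrt\alpha\beta/n$ (as in Lemmas~\ref{first_stopping time}--\ref{nonfirst_stopping_time}), produces $\T\x_{s+1}=\x_{n/b}$ obeying
\[
\normHsq{\T\x_{s+1}-\x^*}\;\le\;2e^2\,\tfrac{\kappa\alpha^2\gamma}{n}\,\normHsq{\T\x_s-\x^*}
\]
with high probability. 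Because the hypothesis $n>c_1\kappa\alpha^2\ln(2c_1\kappa\alpha^2/\delta)^7$ forces $2e^2\kappa\alpha^2\gamma/n<1$, this contraction keeps $\T\x_{s+1}$ inside $\mathcal{U}_f(\epsilon_0\eta)$, so the bound can be chained over $s$ outer iterations; converting the two endpoints back to $f$-values and absorbing constants then yields the rate $\rho=c_3\kappa\alpha^2\ln(n/\delta)^2/n$.

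For the single outer iteration I would first check (routine algebra) that $n>c_1\kappa\alpha^2\ln(2c_1\kappa\alpha^2/\delta)^7$, $b<n/(c_2\alpha\ln(n/\kappa))$ and $\epsilon_0<1/(8e^2\sqrt\alpha)$ imply all the standing assumptions of Lemma~\ref{submartingale_setup}, Theorem~\ref{freedman_concentration}, and Lemmas~\ref{first_stopping time}--\ref{nonfirst_stopping_time}. Then I would induct over the random stopping times $T_0,T_1,\dots$ of \eqref{random_times}, with the inductive statement $E_i=\{\,T_i\text{ realized}\ \Rightarrow\ \normHsq{\Delta_{T_i}}\le 2e^2\tfrac{\kappa\alpha^2\gamma}{n}\normHsq{\T\Delta}\,\}$. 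The base case $E_0$ is Lemma~\ref{first_stopping time}, with $\Pr(E_0)\ge 1-\delta b/n$. For the step, observe that on $E_0\cap\cdots\cap E_i$ the inequality $2e^2\kappa\alpha^2\gamma/n<1$ rules out the branch ``$\normHsq{\Delta_{T_i}}>e^2\normHsq{\T\Delta}$'' of the stopping rule, so whenever $T_i<n/b$ one in fact has the strict smallness $\normHsq{\Delta_{T_i}}<\tfrac{\kappa\alpha^2\gamma}{n}\normHsq{\T\Delta}$ --- exactly the event conditioned on in Lemma~\ref{nonfirst_stopping_time}, which then gives $\Pr(E_{i+1}\mid E_0\cap\cdots\cap E_i)\ge 1-\delta b/n$. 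Since at most $n/b$ stopping times are realized per outer iteration, a union bound gives $\Pr(\bigcap_i E_i)\ge 1-\delta$ (the auxiliary high-probability events from Lemmas~\ref{lma3} and \ref{high_prob} folded into those proofs cost at most one more $\delta$, which is the source of the factor $2\delta$ in the theorem).

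On the event $\bigcap_i E_i$ I would conclude by looking at the last realized random time, which by construction equals $n/b$. If it is a stopping time $T_k=n/b$, then $E_k$ is already the desired contraction. If it is a resume time $L_k=n/b$, then the preceding stopping time satisfied $\normHsq{\Delta_{T_{k-1}}}<\tfrac{\kappa\alpha^2\gamma}{n}\normHsq{\T\Delta}$, property \eqref{from_T_to_L} keeps the error below that threshold through $L_k-1$, and the one-step bound of Lemma~\ref{high_prob} at most doubles it at $L_k$, so $\normHsq{\Delta_{n/b}}\le 2\tfrac{\kappa\alpha^2\gamma}{n}\normHsq{\T\Delta}$. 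Extending from one outer iteration to general $s$ is a further union bound over the $s$ iterations, after replacing the per-iteration parameter $\delta$ by $\Theta(\delta/s)$; since $s\le n$ this changes $\gamma$ only by a $\ln s$ factor absorbed into $\ln(n/\delta)$, leaving $\rho$ unchanged up to constants.

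The hard part is the stopping-time induction of the second paragraph: the conditional estimates of Lemmas~\ref{first_stopping time} and \ref{nonfirst_stopping_time} only chain if \emph{every} inner iterate is guaranteed to lie in $\mathcal{U}_f(e^2\epsilon_0\eta)$, so that Lemmas~\ref{lma1}--\ref{lma3} and \ref{submartingale_prop}--\ref{high_prob} stay applicable at each step; equivalently, the ``leaving the neighbourhood'' scenario of Figure~\ref{fig:submartingale} must be excluded with high probability. This is precisely why the inductive hypothesis has to be the sharp $2e^2\kappa\alpha^2\gamma/n$ bound rather than the weaker ``$\normHsq{\Delta_{T_i}}\le e^2\normHsq{\T\Delta}$'', and why the regime $n\gg\kappa\alpha^2\,\polylog$ is indispensable. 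Once the chain of stopping-time bounds is established, translating the $\normHsq{\cdot}$-contraction to $f$-suboptimality and collecting the absolute constants $c_1,c_2,c_3$ is routine.
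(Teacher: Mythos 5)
Your proposal is essentially correct and follows the same high-level strategy as the paper: reduce to a single outer iteration, chain high-probability bounds across the random stopping times $T_0,T_1,\dots$ using Lemmas~\ref{first_stopping time} and \ref{nonfirst_stopping_time}, control the total number of stopping times by $n/b$, absorb the extra $\delta$ from Lemma~\ref{lma3}, and convert the $\normHsq{\cdot}$-contraction to $f$-suboptimality via a quadratic Taylor expansion in $\mathcal{U}_f$.

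The two arguments differ mainly in bookkeeping. The paper defines $\mathcal{E}_i=\{T_i<n/b\}\cap\{\normHsq{\Delta_{T_i}}<\frac{\kappa\alpha^2\gamma}{n}\normHsq{\T\Delta}\}$ and the target event $\mathcal{S}$ on $\Delta_{n/b}$, and then runs a \emph{backward} recursion $\Pr(\mathcal{S}\mid\mathcal{E}_i)\geq(1-\delta b/n)\bigl[\Pr(T_{i+1}=n/b\mid\mathcal{E}_i)+\Pr(\mathcal{S}\mid\mathcal{E}_{i+1})\Pr(T_{i+1}<n/b\mid\mathcal{E}_i)\bigr]$, terminating at $i=n/b-1$. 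You instead use a \emph{forward} induction $\Pr(E_{i+1}\mid E_0\cap\dots\cap E_i)\geq1-\delta b/n$ on the implication-style events $E_i$ and then take a union bound; the key observation that $2e^2\kappa\alpha^2\gamma/n<1$ rules out the ``too large'' branch of the stopping rule, so a realized $T_i<n/b$ forces the strict bound $\normHsq{\Delta_{T_i}}<\frac{\kappa\alpha^2\gamma}{n}\normHsq{\T\Delta}$ needed to invoke Lemma~\ref{nonfirst_stopping_time}, is exactly what the paper also exploits (Remark~(c), Eq.~\eqref{Freedman_conclusion_3}). Your forward variant is slightly simpler to state; the paper's backward variant avoids your separate endpoint case analysis on $L_k=n/b$, because the paper's success event $\mathcal{S}$ is defined directly on $\Delta_{n/b}$ (and in fact $L_k=n/b$ already forces $T_k=n/b$, so your extra case collapses to the first one). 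Two small points worth flagging: you appeal to Lemma~\ref{high_prob} at a step where $\normHsq{\Delta_t}$ may fall below $\frac{\kappa\alpha^2\gamma}{n}\normHsq{\T\Delta}$, strictly outside the regime where that lemma is proved --- the paper has the same gap in Lemma~\ref{nonfirst_stopping_time} --- and your explicit treatment of the $\delta\to\Theta(\delta/s)$ rescaling for chaining over $s$ outer iterations is actually \emph{more} careful than the paper, which proves only the single-iteration contraction and leaves the chaining implicit.
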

\begin{proof} Note that in our running notation, $\T\x_0$ is just $\T\x$. We prove one outer iteration result where $\Tilde\x_1$ means $\x_{n/b}$.
Consider $\beta = 32\ln\left(\frac{n}{2\alpha^2\kappa}\right) $ and $\gamma = 1280\beta^2\ln\left(n/b\delta\right)$
and note that due to our assumptions, the results of Theorem~\ref{freedman_concentration} and Lemmas \ref{first_stopping time} and \ref{nonfirst_stopping_time} hold. Also for the given values of $\beta$ and $\gamma$, the upper bound condition on $b$ can be replaced with $b < \frac{n}{c_2\alpha\ln(n/2\alpha^2\kappa)}$, where $c_2 = 32\cdot 4$.
Let $\mathcal{S}$ be the event denoting our convergence guarantee, i.e., 
\begin{align*}
    \mathcal{S} :=\left\{\normHsq{\Delta_{n/b}} \leq 2e^2\frac{\kappa\alpha^2\gamma}{n}\normHsq{\T\Delta}\right\}.
\end{align*}
Also we define events $\mathcal{E}_i$ for all $i\geq 0$ as following,
\begin{align*}
    \mathcal{E}_i := \left\{T_i < \frac{n}{b}\right\}\cap\left\{\normHsq{\Delta_{T_i}} < \frac{\kappa\alpha^2\gamma}{n}\normHsq{\T\Delta}\right\}.
\end{align*}
Due to Lemma \ref{nonfirst_stopping_time} we know that $\Pr\left(\mathcal{E}_{i+1} | \mathcal{E}_i, T_{i+1} < \frac{n}{b}\right) \geq  1-\delta\frac{b}{n}$. Moreover, by using the law of total probability in conjunction with conditioning on the first stopping time, we have,
\begin{align*}
    \Pr\left(\mathcal{S}\right) = \Pr\left(\mathcal{S} \ | T_0=\frac{n}{b}\right)\cdot \Pr\left(T_0=\frac{n}{b}\right) + \Pr\left(\mathcal{S} \ | T_0<\frac{n}{b}\right)\cdot \Pr\left(T_0<\frac{n}{b}\right).
\end{align*}
Using Lemma \ref{first_stopping time}, we have $ \Pr\left(\mathcal{S} \ | T_0=\frac{n}{b}\right) \geq 1-\delta\frac{b}{n}$, and
\begin{align}
    \Pr\left(\mathcal{S}\right) = \left(1-\delta\frac{b}{n}\right)\cdot \Pr\left(T_0=\frac{n}{b}\right) + \Pr\left(\mathcal{S} \ | T_0<\frac{n}{b}\right)\cdot \Pr\left(T_0<\frac{n}{b}\right). \label{second_last_0}
\end{align}
Next, we consider the term $\Pr\left(\mathcal{S} \ | T_0<\frac{n}{b}\right)$,
\begin{align*}
    \Pr\left(\mathcal{S} \ | T_0<\frac{n}{b}\right) \geq \Pr\left(\mathcal{S} \ | \mathcal{E}_0, T_0<\frac{n}{b}\right)\cdot \Pr\left(\mathcal{E}_0 \ | \ T_0 < \frac{n}{b}\right).
\end{align*}
By Lemma \ref{first_stopping time}, $\Pr\left(\mathcal{E}_0 \ | \ T_0 < \frac{n}{b}\right) \geq 1-\delta\frac{b}{n}$. Substituting this in (\ref{second_last_0}), 
\begin{align}
     \Pr\left(\mathcal{S}\right) \geq \left(1-\delta\frac{b}{n}\right)\left[ \Pr\left(T_0=\frac{n}{b}\right)+\Pr\left(\mathcal{S} \ | \mathcal{E}_0\right)\cdot\Pr\left(T_0<\frac{n}{b}\right)\right]. \label{second_last_1}
\end{align}
We now show that $\Pr\left(\mathcal{S} \ | \mathcal{E}_i  \ \right) \geq 1-\delta$ for any $i$. Consider the following,
\begin{align*}
    \Pr\left(\mathcal{S} \ | \mathcal{E}_i  \ \right) =  &\Pr\left(\mathcal{S} \ | T_{i+1} = \frac{n}{b}, \mathcal{E}_i  \ \right)\cdot \Pr\left(T_{i+1} = \frac{n}{b} \ | \ \mathcal{E}_i\right) \\
    &\quad +\Pr\left(\mathcal{S} \ | T_{i+1} < \frac{n}{b}, \mathcal{E}_i  \ \right)\cdot \Pr\left(T_{i+1} < \frac{n}{b} \ | \ \mathcal{E}_i\right).
\end{align*}
By Lemma \ref{nonfirst_stopping_time}, we have $\Pr\left(\mathcal{S} \ | T_{i+1} = \frac{n}{b}, \mathcal{E}_i  \ \right) \geq 1-\delta\frac{b}{n}$, 
\begin{align}
    \Pr\left(\mathcal{S} \ | \mathcal{E}_i  \ \right) \geq \left(1-\delta\frac{b}{n}\right)\cdot &\Pr\left(T_{i+1} = \frac{n}{b} \ | \ \mathcal{E}_i\right) \nonumber \\ +&\Pr\left(\mathcal{S} \ | T_{i+1} < \frac{n}{b}, \mathcal{E}_i  \ \right)\cdot \Pr\left(T_{i+1} < \frac{n}{b} \ | \ \mathcal{E}_i\right). \label{second_last_2}
\end{align}
We write $\Pr\left(\mathcal{S} \ | T_{i+1} < \frac{n}{b}, \mathcal{E}_i  \ \right)$ as follows,
\begin{align*}
    \Pr\left(\mathcal{S} \ | T_{i+1} < \frac{n}{b}, \mathcal{E}_i  \ \right) &\geq \Pr\left(\mathcal{S} \ | T_{i+1} < \frac{n}{b},\mathcal{E}_i,\mathcal{E}_{i+1}  \ \right)\cdot\Pr\left(\mathcal{E}_{i+1} | T_{i+1} < \frac{n}{b}, \mathcal{E}_i\right)\\
    &=\Pr\left(\mathcal{S} \ | \ \mathcal{E}_{i+1}  \ \right)\cdot\Pr\left(\mathcal{E}_{i+1} | T_{i+1} < \frac{n}{b}, \mathcal{E}_i\right),
\end{align*}
where in the last inequality we use $\Pr\left(\mathcal{S} \ | T_{i+1} < \frac{n}{b},\mathcal{E}_i,\mathcal{E}_{i+1}  \ \right) = \Pr\left(\mathcal{S} \ | \mathcal{E}_{i+1}  \ \right)$. We also use Lemma \ref{nonfirst_stopping_time} to write $\Pr\left(\mathcal{E}_{i+1} | T_{i+1} < \frac{n}{b}, \mathcal{E}_i\right) \geq 1-\delta\frac{b}{n}$, implying,
\begin{align}
    \Pr\left(\mathcal{S} \ | T_{i+1} < \frac{n}{b}, \mathcal{E}_i  \ \right) &\geq \left(1-\delta\frac{b}{n}\right) \Pr\left(\mathcal{S} \ | \mathcal{E}_{i+1}  \ \right). \label{second_last_3}
\end{align}
Substituting (\ref{second_last_3}) in (\ref{second_last_2}), we get,
\begin{align*}
    \Pr\left(\mathcal{S} \ | \ \mathcal{E}_i\right) \geq \left(1-\delta\frac{b}{n}\right)\left[ \Pr\left(T_{i+1} = \frac{n}{b} \ | \ \mathcal{E}_i\right)+ \Pr\left(\mathcal{S} \ | \mathcal{E}_{i+1}  \ \right)\Pr\left(T_{i+1} < \frac{n}{b} \ | \ \mathcal{E}_i\right)  \right].
\end{align*}
Since we perform a finite number of iterations, $\Pr\left(T_{i+1} < \frac{n}{b} \ | \ \mathcal{E}_i\right) =0$ for any $i \geq \frac{n}{b}$. This intuitively means that the number of stopping times is upper bounded by the number of inner iterations. In the worst case $i=\frac{n}{b}-1$, which means that $\Pr\left(T_{i+1} < \frac{n}{b} \ | \ \mathcal{E}_i\right) =0$ for $i = \frac{n}{b}-1$. implying that
$\Pr\left(\mathcal{S}|\mathcal{E}_{\frac{n}{b}-1}\right) \geq 1-\delta\frac{b}{n}$. Also note that if $\Pr\left(\mathcal{S}\ | \ \mathcal{E}_{i+1}\right) \geq \left(1-\delta\frac{b}{n}\right)^l$ for some integer $l \geq 0$, then $\Pr\left(\mathcal{S}\ | \ \mathcal{E}_{i}\right) \geq \left(1-\delta\frac{b}{n}\right)^{l+1}$. Combining this with the fact there cannot be more than $\frac{n}{b}$ stopping times, along with using (\ref{second_last_1}) we get,
\begin{align*}
    \Pr\left(\mathcal{S}\right) \geq 1-\delta.
\end{align*}
Incorporating the total failure probability of Lemma \ref{lma3} for any of the $n/b$ iterations, we get a failure probability of at most $2\delta$. 

As a final step, we prove our result in terms of function values. Since $f$ has continuous first- and second-order derivatives, by the 
quadratic Taylor expansion, for vectors $\ai$ and $\vi$, there exists a $\theta \in [0,1]$ such that,
\begin{align*}
 f(\ai+\vi) = f(\ai) + \langle \nabla{f}(\ai), \vi \rangle + \frac{1}{2}\vi^\top\nabla^2{f}(\ai+\theta \vi)\vi.
\end{align*}
Let $\ai=\x^*$, $\vi=\x_{n/b} - \x^*$,
\begin{align*}
f(\x_{n/b}) - f(\x^*) = \frac{1}{2}(\x_{n/b}-\x^*)^\top\nabla^2{f}(\x^* + \theta (\x_{n/b}-\x^*))(\x_{n/b}-\x^*).
\end{align*}
With high probability, we know that $\x_{n/b} \in \mathcal{U}_f(e^2\epsilon_0\eta)$ and have $\x^* + \theta (\x_{n/b}-\x^*) \in \mathcal{U}_f(e^2\epsilon_0\eta) $. Take $\z = \x^* + \theta (\x_{n/b}-\x^*)$, we have $\frac{1}{1+e^2\epsilon_0\eta}\cdot\Hi\preceq \nabla^2{f}(\z) \preceq (1+e^2\epsilon_0\eta)\cdot\Hi$. Then, 
\begin{align*}
\frac{1}{2}(\x_{n/b}-\x^*)^\top\nabla^2{f}(\x^* + \theta (\x_{n/b}-\x^*)) ^\top (\x_{n/b}-\x^*) &= \frac{1}{2}\nsq{\x_{n/b}-\x^*}_{\nabla^2{f}(\z)}\\
& \leq \frac{1}{2}(1+ e^2\epsilon_0\eta)\normHsq{\Delta_{n/b}}\\
& < \normHsq{\Delta_{n/b}},
\end{align*}
so it follows that,
\begin{align*}
f(\x_{n/b}) - f(\x^*) \leq \normHsq{\Delta_{n/b}}.
\end{align*}
Using the reverse inequality with $\T\x$ in place of $\x_{n/b}$, we have,
\begin{align*}
    f(\T\x) - f(\x^*) \geq 2(1+\epsilon_0)\normHsq{\T\Delta}.
\end{align*}
Combining the above two relations with the definition of $\mathcal{S}$, we get with probability at least $1-2\delta$,
\begin{align*}
    f(\x_{n/b}) -f(\x^*) \leq 6e^2\frac{\kappa\alpha^2\gamma}{n}\cdot \left(f(\T\x) - f(\x^*)\right).
\end{align*}
Setting $c_3 =6e^2\cdot 1280\cdot32$, and writing the result in terms of outer iterates by noticing $\T\x = \T\x_0$ and $\x_{n/b}=\T\x_1$, concludes the proof.
\ifdocenter
\else
\qed
\fi
\end{proof}

\subsection{Global convergence analysis}{\label{global}}

In this section, we prove the global convergence of $\texttt{Mb-SVRN}$. Unlike Theorem \ref{final_result} we have no local neighborhood condition and we do not put any condition on the gradient mini-batch size. Due to being a stochastic second-order method, the global rate of convergence of $\texttt{Mb-SVRN}$ is provably much slower than the local convergence guarantee.

\begin{theorem}[Global convergence of \texttt{Mb-SVRN}]{\label{global_convergence}}
For any gradient mini-batch size $b$ and $\eta = \min\left\{\frac{2}{\kappa\sqrt{\alpha}}, \frac{b}{8\kappa^3\alpha^{3/2}}\right\}$ there exists $\rho' <1$ such that,
    \begin{align*}
        \E f(\x_{n/b}) -f(\x^*) < \rho'\cdot \left(f(\T\x) - f(\x^*)\right).
    \end{align*}

\end{theorem}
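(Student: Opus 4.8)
I would view a single outer iteration of \texttt{Mb-SVRN} as $n/b$ steps of a variance-reduced, preconditioned stochastic gradient method and track the function gap $D_t := f(\x_t)-f(\x^*)$ directly, starting from $D_0 = f(\T\x)-f(\x^*)$ (since $\x_0=\T\x$), proving a recursion $\E_t D_{t+1}\le c_1 D_t + c_2 D_0$ that unrolls to the claimed bound. Throughout I use that $f$ is $\lambda$-smooth (an average of $\lambda$-smooth $\psi_i$) and $\mu$-strongly convex, and that the Hessian estimate $\hat\Hi$, fixed within the outer iteration, satisfies $\tfrac{\mu}{\sqrt\alpha}\I\preceq\hat\Hi\preceq\sqrt\alpha\lambda\I$ by Definition~\ref{d:hessian-oracle} together with $\mu\I\preceq\nabla^2 f(\T\x)\preceq\lambda\I$; in particular $\g_t^\top\hat\Hi^{-1}\g_t\ge\tfrac1{\sqrt\alpha\lambda}\normsmall{\g_t}^2$ and $\normsmall{\hat\Hi^{-1}}\le\tfrac{\sqrt\alpha}{\mu}$.

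\smallskip
\noindent\textbf{Step 1 (global SVRG variance bound).} Since there is no local-neighborhood assumption, Lemma~\ref{lma1} does not apply, so I would first establish a global bound $\E_t\normsmall{\bar\g_t-\g_t}^2\le\tfrac{c\lambda}{b}(D_t+D_0)$ by the classical SVRG argument: the $b$ summand indices are i.i.d.\ uniform, so $\E_t\normsmall{\bar\g_t-\g_t}^2 \le \tfrac1b\,\E_i\normsmall{\nabla\psi_i(\x_t)-\nabla\psi_i(\T\x)}^2 \le \tfrac2b\big(\E_i\normsmall{\nabla\psi_i(\x_t)-\nabla\psi_i(\x^*)}^2+\E_i\normsmall{\nabla\psi_i(\T\x)-\nabla\psi_i(\x^*)}^2\big)$, and then the convex-smoothness inequality $\normsmall{\nabla\psi_i(\x)-\nabla\psi_i(\x^*)}^2\le 2\lambda\big(\psi_i(\x)-\psi_i(\x^*)-\langle\nabla\psi_i(\x^*),\x-\x^*\rangle\big)$, averaged over $i$ and combined with $\tfrac1n\sum_i\nabla\psi_i(\x^*)=\nabla f(\x^*)=0$, turns each term into a constant multiple of the corresponding gap.

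\smallskip
\noindent\textbf{Step 2 (one-step descent in expectation).} I would apply the descent lemma for $\lambda$-smooth $f$ to $\x_{t+1}=\x_t-\eta\hat\Hi^{-1}\bar\g_t$, take $\E_t$ (using $\E_t\bar\g_t=\g_t=\nabla f(\x_t)$, so the linear term is $-\eta\,\g_t^\top\hat\Hi^{-1}\g_t$), and bound the quadratic term by $\E_t\normsmall{\hat\Hi^{-1}\bar\g_t}^2\le\normsmall{\hat\Hi^{-1}}\,\E_t\big[\bar\g_t^\top\hat\Hi^{-1}\bar\g_t\big]=\normsmall{\hat\Hi^{-1}}\big(\g_t^\top\hat\Hi^{-1}\g_t+\E_t\big[(\bar\g_t-\g_t)^\top\hat\Hi^{-1}(\bar\g_t-\g_t)\big]\big)$. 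The resulting term $-\eta(1-\tfrac{\lambda\eta\normsmall{\hat\Hi^{-1}}}{2})\g_t^\top\hat\Hi^{-1}\g_t$ stays negative for $\eta\lesssim\tfrac1{\kappa\sqrt\alpha}$ and is then lower-bounded by $\Theta(\tfrac{\eta}{\kappa\sqrt\alpha})D_t$ via $\normsmall{\g_t}^2\ge2\mu D_t$ (Polyak--{\L}ojasiewicz); the residual variance term is $O(\tfrac{\alpha\kappa^2\eta^2}{b})(D_t+D_0)$ by $\normsmall{\hat\Hi^{-1}}\le\tfrac{\sqrt\alpha}{\mu}$ and Step~1. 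This yields
\begin{align*}
\E_t D_{t+1}\;\le\;\Big(1-\Theta\big(\tfrac{\eta}{\kappa\sqrt\alpha}\big)+O\big(\tfrac{\alpha\kappa^2\eta^2}{b}\big)\Big)D_t\;+\;O\big(\tfrac{\alpha\kappa^2\eta^2}{b}\big)D_0.
\end{align*}

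\smallskip
\noindent\textbf{Step 3 (calibration, unrolling, and the main obstacle).} The step size $\eta=\min\{\tfrac{2}{\kappa\sqrt\alpha},\tfrac{b}{8\kappa^3\alpha^{3/2}}\}$ is exactly what makes this work: the first argument enforces $\eta\lesssim\tfrac1{\kappa\sqrt\alpha}$, while $\eta\lesssim\tfrac{b}{\kappa^3\alpha^{3/2}}$ forces $\tfrac{\alpha\kappa^2\eta^2}{b}\lesssim\tfrac{\eta}{\kappa\sqrt\alpha}$, so the recursion collapses to $\E_t D_{t+1}\le(1-\tfrac{\eta}{2\sqrt\alpha\kappa})D_t+\tfrac{2\alpha\kappa^2\eta^2}{b}D_0$. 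Taking total expectation and iterating $n/b$ times,
\begin{align*}
\E D_{n/b}\;\le\;\Big[\Big(1-\tfrac{\eta}{2\sqrt\alpha\kappa}\Big)^{n/b}+\frac{4\alpha^{3/2}\kappa^3\eta}{b}\Big]D_0\;=:\;\rho'\,D_0,
\end{align*}
and the choice of $\eta$ keeps the bracket a constant strictly below $1$ (the second summand is a fixed constant $<1$ by design, and in the large-$n$ regime of interest the geometric first summand is bounded away from $1$). Since $\x_{n/b}=\T\x_1$ and $\x_0=\T\x$, this is the stated one-outer-iteration guarantee. The delicate point is precisely this balancing act: because $D_t$ need not be small globally, the SVRG variance contributes both a $D_t$ piece and a $D_0$ piece, and one must simultaneously keep the $D_t$ piece below the per-step contraction \emph{and} ensure that the $D_0$ piece, once summed over all $n/b$ inner steps, does not destroy the geometric decay — this is exactly why the admissible step size must shrink linearly in $b$, and getting the constants to line up so that $\rho'$ is a genuine constant below $1$ (rather than merely $\le 1$) is where most of the care goes.
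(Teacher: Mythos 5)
Your proposal follows the paper's proof very closely: same descent-lemma starting point, same use of unbiasedness of $\bar\g_t$ and preconditioner spectral bounds derived from $\hat\Hi\approx_{\sqrt\alpha}\nabla^2 f(\T\x)$ together with $\mu\I\preceq\nabla^2 f\preceq\lambda\I$, same classical SVRG variance decomposition around $\x^*$ (the paper uses it implicitly via the bound $\E_t\|\bar\g_t-\g_t\|^2\le\tfrac{8\lambda}{b}(D_t+D_0)$), and the same two-term recursion that you then unroll over $n/b$ inner steps. Steps 1 and 2 are sound and are essentially the paper's argument reorganized.

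The one genuine gap is in Step 3, when you unroll. You bound the geometric sum $\sum_{j=0}^{n/b-1}\xi^j=\tfrac{1-\xi^{n/b}}{1-\xi}$ by $\tfrac{1}{1-\xi}$, which discards the factor $(1-\xi^{n/b})$. With your calibration, the second summand $\tfrac{C}{1-\xi}$ is at most $\tfrac12$, so your bound becomes $\rho'\le \xi^{n/b}+\tfrac12$, and this is \emph{not} strictly below $1$ unless $\xi^{n/b}<\tfrac12$. You acknowledge this by appealing to ``the large-$n$ regime of interest,'' but the theorem asserts $\rho'<1$ for \emph{any} mini-batch size $b$ with no extra hypothesis on $n$; for instance with $b$ close to $n$ (so $n/b$ is small) and $\eta=\tfrac{b}{8\kappa^3\alpha^{3/2}}$ the factor $\xi^{n/b}$ can be arbitrarily close to $1$, and then $\xi^{n/b}+\tfrac12>1$. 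The paper avoids this by \emph{not} discarding $(1-\xi^{n/b})$: it keeps
\[
\rho'\;\le\;\xi^{n/b}\;+\;\frac{C}{1-\xi}\bigl(1-\xi^{n/b}\bigr)\;\le\;\xi^{n/b}+\tfrac12\bigl(1-\xi^{n/b}\bigr)\;=\;\tfrac12\bigl(1+\xi^{n/b}\bigr),
\]
which is strictly less than $1$ whenever $\xi<1$, i.e.\ for any $\eta>0$, unconditionally. This single algebraic adjustment is what delivers the ``for any $b$'' in the theorem statement, and the paper then further specializes into subcases on $\eta$ only to make the rate $\rho'$ explicit, not to establish $\rho'<1$.
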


\begin{proof}
Refer to Appendix \ref{p_global_convergence}.
\end{proof}

\section{Numerical Experiments}{\label{Experiments}}
We now present more extensive
empirical evidence to support our theoretical analysis. We considered a regularized logistic loss minimization task with two datasets, \texttt{EMNIST} dataset ($n \approx 700k$) and \texttt{CIFAR10} dataset ($n = 60k$) transformed using a random feature map obtaining $d=256$ features for both datasets. In addition to the above two datasets, we performed experiments on a logistic regression task with synthetic data (with $n = 500k$, $d=256$); see Appendix \ref{synthetic_experiments}.

\subsection{Experimental setup}{\label{experiment_setup}}
 The regularized logistic loss function can be expressed 
 as the following finite-sum objective function: 
\begin{align*}
    f(\x) = \frac{1}{n}\sum_{i=1}^{n}{\ln(1+e^{-b_i\ai_i^\top\x})} + \frac{\mu}{2}\nsq{\x}.
\end{align*}
where $(\ai_i,b_i)$ denote the training examples with $\ai_i \in \R^d$ and $b_i \in \{+1,-1\}$ for $i \in \{1,2,\dots,n\}$. The function $f(\x)$ is $\mu$-strongly convex. We set $\mu=10^{-6}$ in our experiments on \texttt{EMNIST} and \texttt{CIFAR10}. Before running \texttt{Mb-SVRN}, for all $3$ datasets, we find the respective $\x^*$ to very high precision by using 
Newton's Method. This is done in order to calculate the error $f(\x_t)-f(\x^*)$ at every iteration $t$.
For the Hessian approximation in \texttt{Mb-SVRN}, we use Hessian subsampling, e.g., as in \cite{roosta2019sub,bollapragada2019exact}, sampling $h$ component Hessians at the snapshot vector $\T\x$ and constructing $\hat{\Hi}_s$. The algorithmic implementation is discussed in Algorithm \ref{alg}.
For every combination of gradient mini-batch size ($b$), step size ($\eta$) and Hessian sample size ($h$) chosen for any particular dataset, we tune the number of inner iterations $t_{\max}$ to optimize the convergence rate per data pass and take the average over $10$ runs. We run the experiments for a wide range of step size values $\eta$  in the interval $[2^{-10},2]$.

We compute the convergence rate per data pass experimentally with an approximation scheme described in this section. First, note that one full data pass is done at the start of the outer iteration to compute $\nabla{f}(\T\x)$. Thereafter, for chosen gradient mini batch-size $b$, $2b$ component gradients are computed at every inner iteration ($b$ component gradients each for $\x_t$ and $\T\x$, respectively). Thus, after $t$ inner iterations the number of data passes is given as:
\begin{align*}
    \text{Number of data passes} \ (w) &= 1 + \frac{2bt}{n},\\
\end{align*}
Once we have the number of data passes $(w)$, we can compute the convergence rate per data pass at $t^{th}$ inner iterate as the following:
\begin{align*}
    \text{Convergence rate per data pass} \ (\rho) = \left(\frac{f(\x_t)-f(\x^*)}{f(\T\x)-f(\x^*)}\right)^{1/w}
\end{align*}
where $\x_t$ is the $t^{th}$ inner iterate and $\T\x$ is the outer iterate. Since it is infeasible to find $\rho$ at every inner iterate with an aim to find $t$ that minimizes $\rho$, we use the following an approximation scheme.

We calculate the convergence rate per data pass ($\rho$) once after every $200$ inner iterations. If our calculations yield that $\rho$ has increased twice continuously or the method starts diverging $(\rho >1)$, we halt and return the minimum value of $\rho$ recorded thus far. Furthermore, for very large values of $b$, we increase the frequency of computing $\rho$ (compute $\rho$ after every $\frac{n}{2b}$ inner iterations). In the following sections, we discuss the robustness of \texttt{Mb-SVRN} to $(1)$ gradient mini-batch size and $(2)$ step size.

\subsection{Robustness to gradient mini-batch size}{\label{gradient_experiments}}

\begin{figure}
\ifdocenter
\centering
\vspace{-1cm}
\fi
\hspace{\shift}\begin{tabular}{cccc}
\subfloat[Convergence rate per data pass of \texttt{Mb-SVRN} on \texttt{EMNIST} dataset.]{\includegraphics[width =5in]{Figs/EMNIST/EMNIST_all.png}}&\\
\subfloat[Convergence rate per data pass of \texttt{Mb-SVRN} on \texttt{CIFAR10} dataset.]{\includegraphics[width = 5in]{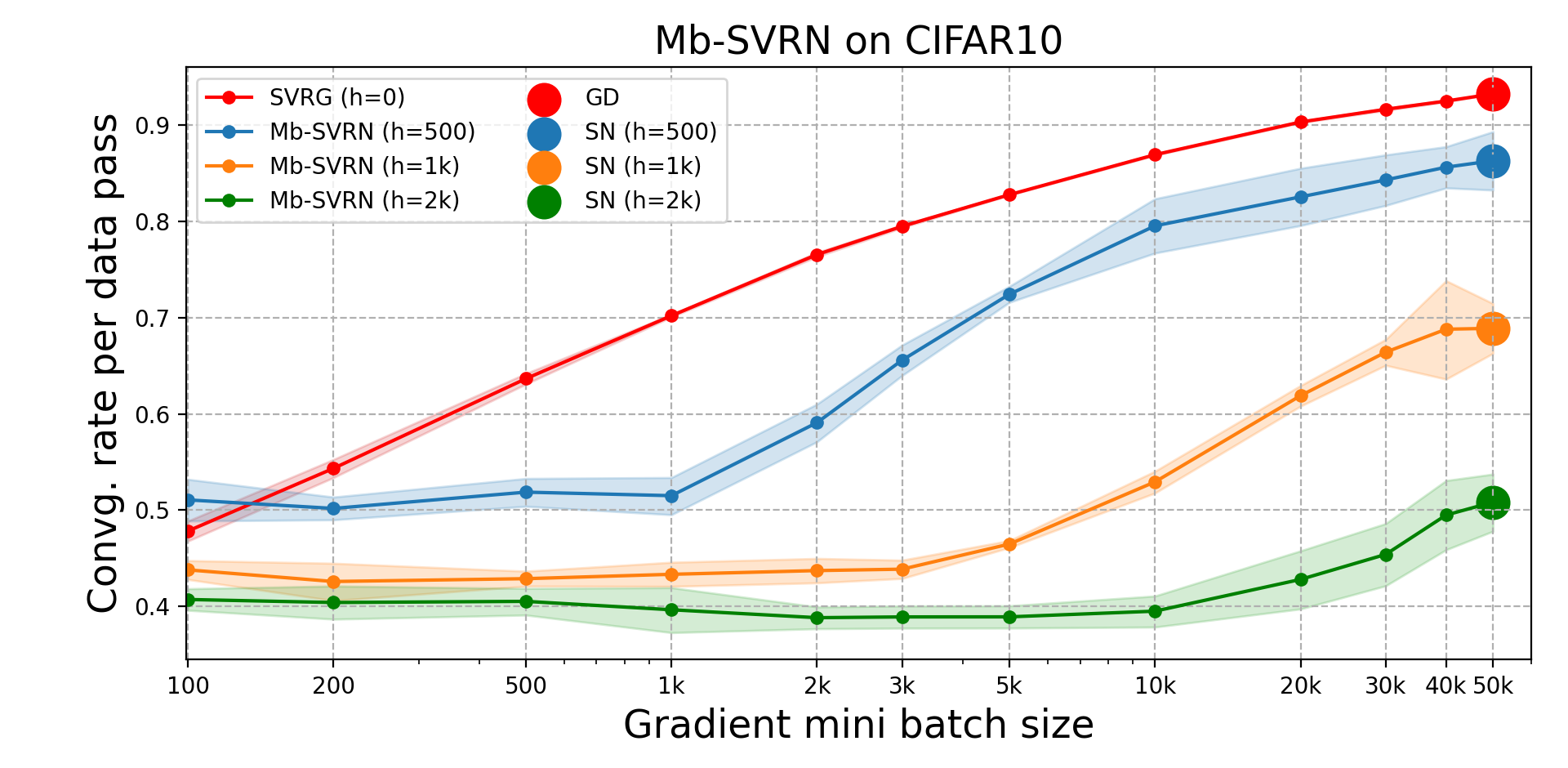}}&\\
\subfloat[Convergence rate per data pass of \texttt{Mb-SVRN} on the synthetic dataset.]{\includegraphics[width = 5in]{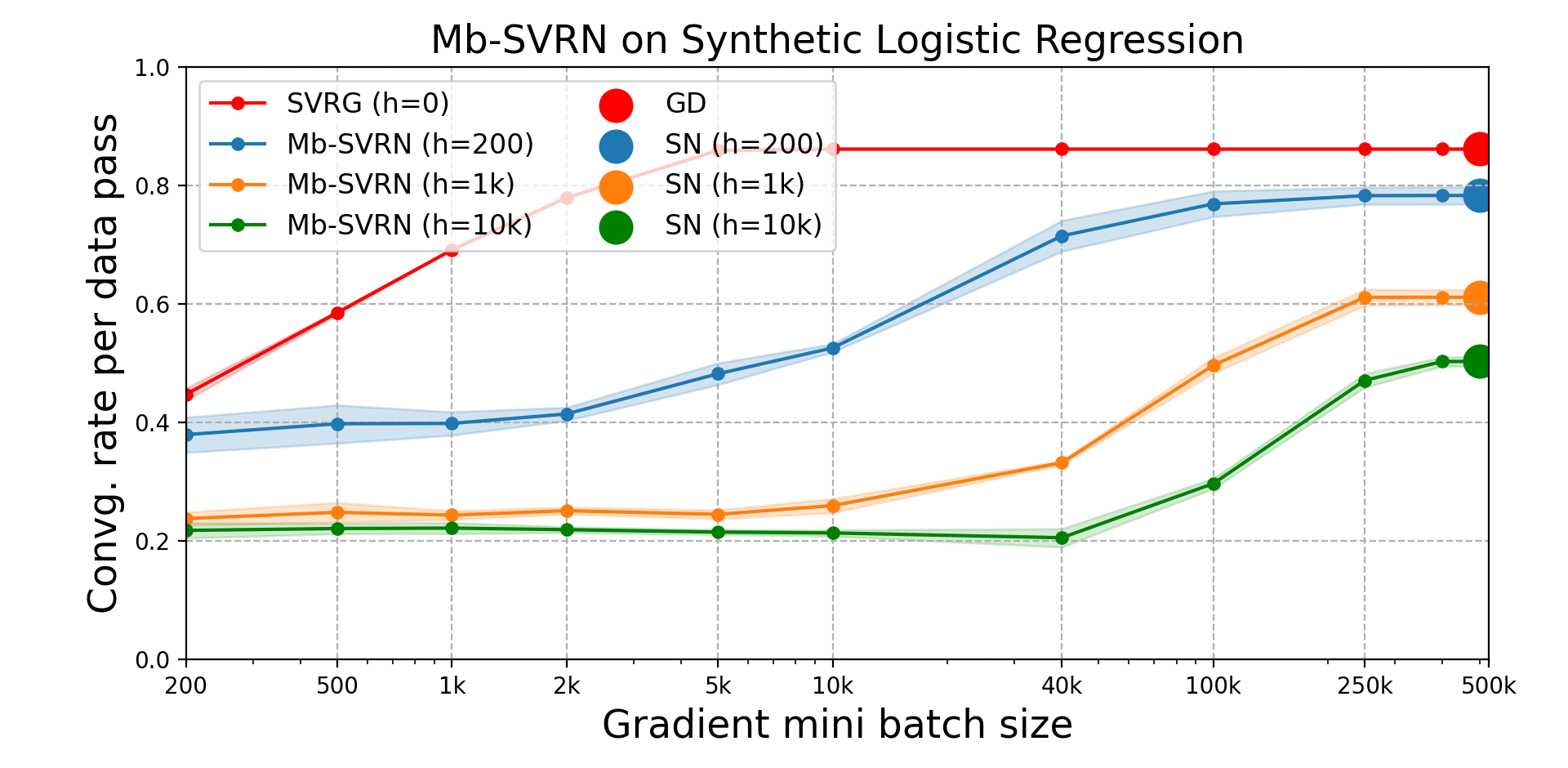}}
\end{tabular}
\caption{Experiments on \texttt{EMNIST}, \texttt{CIFAR10} and the synthetic dataset, as we vary gradient mini-batch size $b$ and Hessian sample size $h$, showing the robustness of \texttt{Mb-SVRN} to gradient mini-batch size and phase transition into standard Newton's method for large mini-batches.}
\label{robust_mini_batch}
\end{figure}
\def\ww{3in}
\begin{figure}
\ifdocenter
\centering
\fi
\hspace{-2cm}
\begin{tabular}{cc}
\subfloat
{\includegraphics[width = \ww]{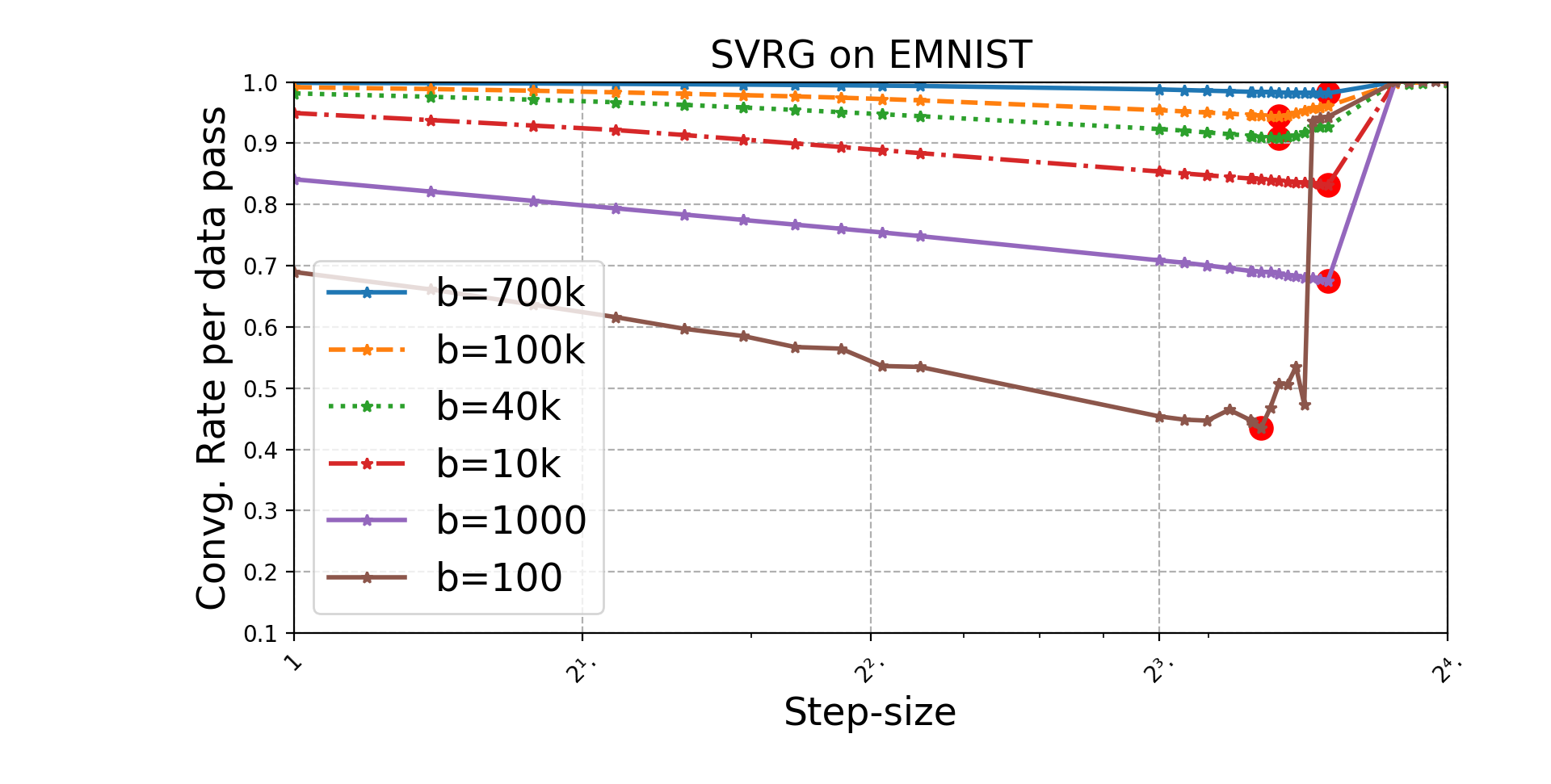}}&
\hspace{-0.75cm}\subfloat
{\includegraphics[width = \ww]{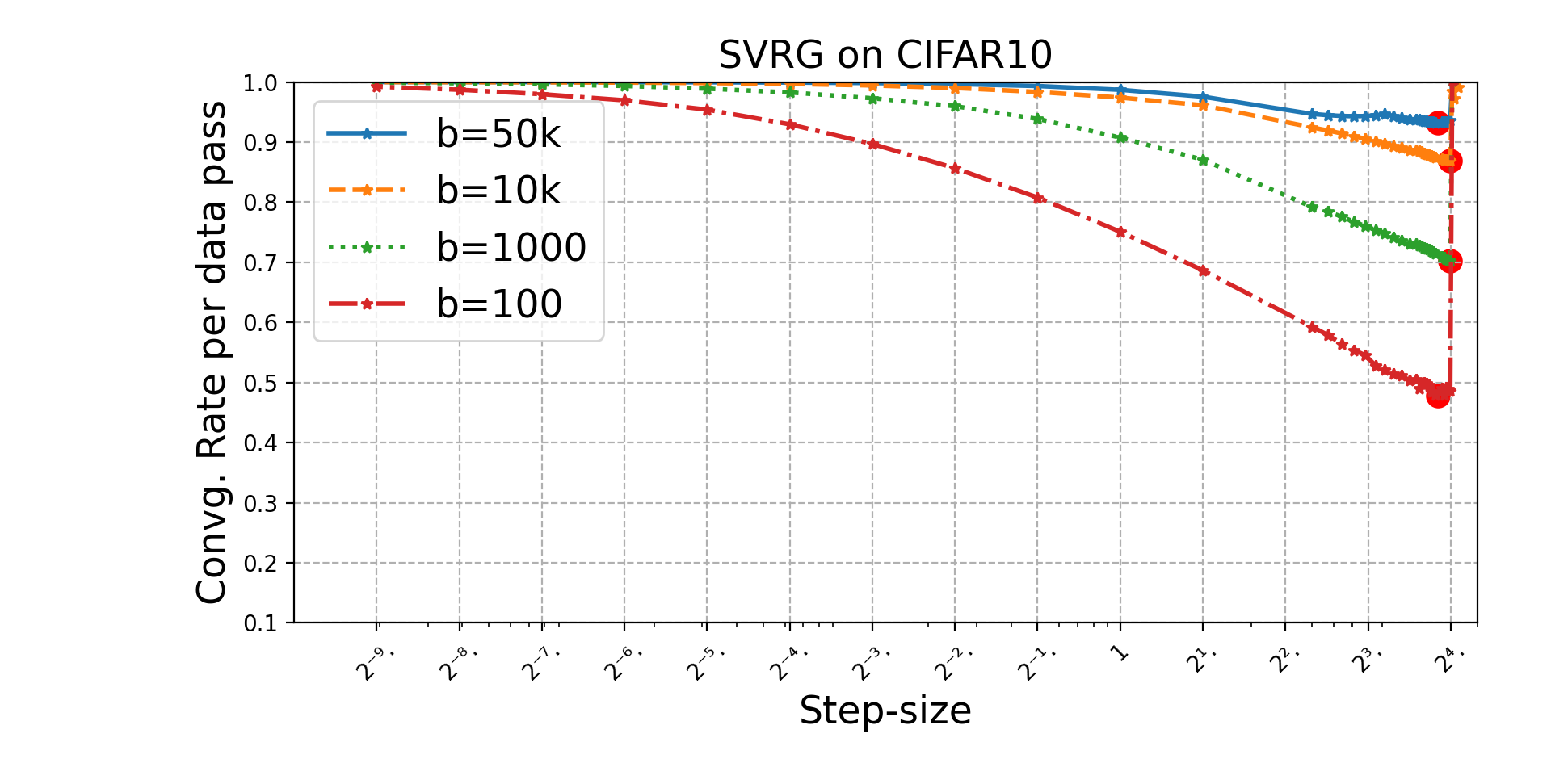}}\\
\subfloat
{\includegraphics[width = \ww]{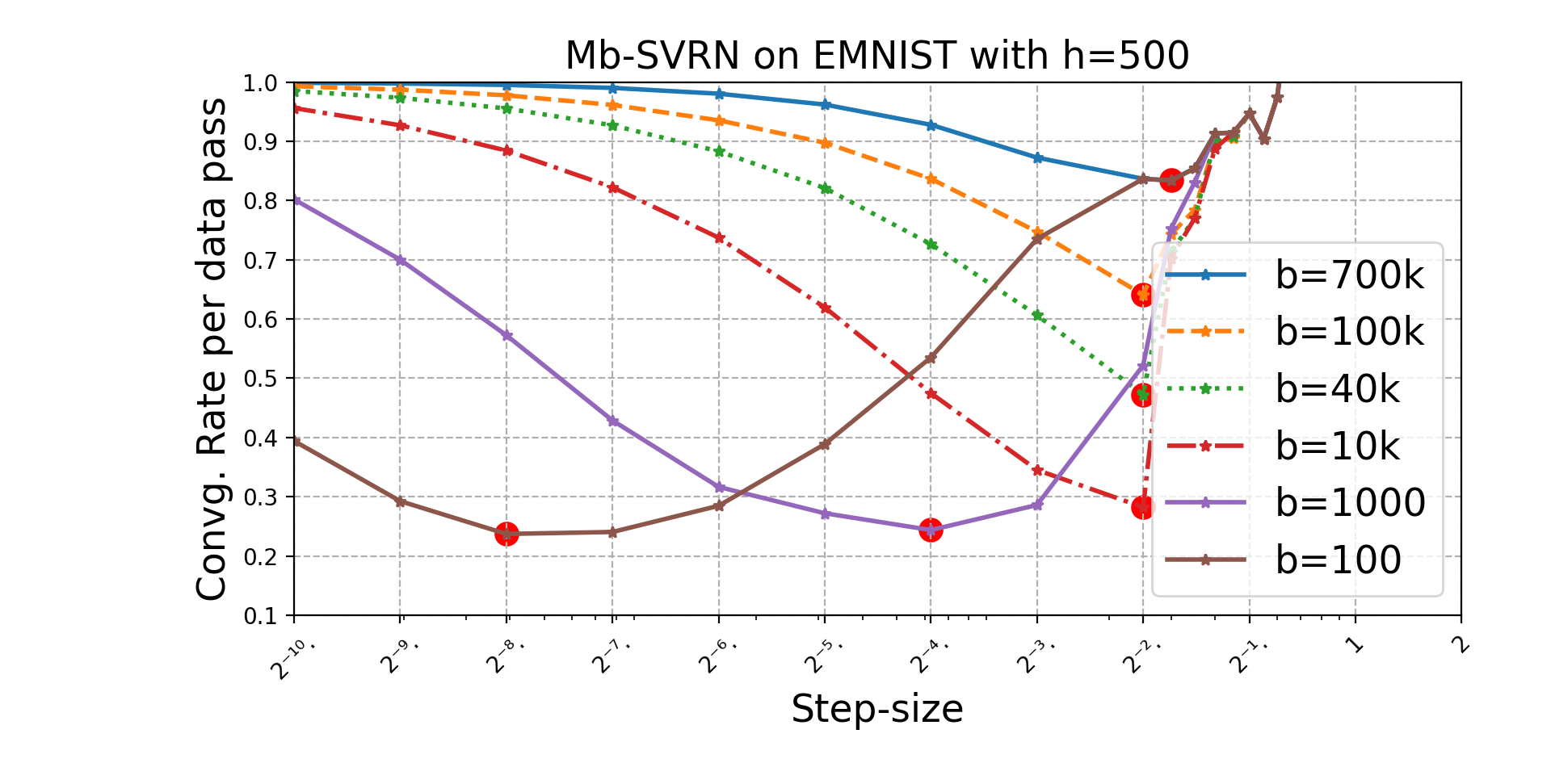}}&
\hspace{-0.75cm}\subfloat
{\includegraphics[width = \ww]{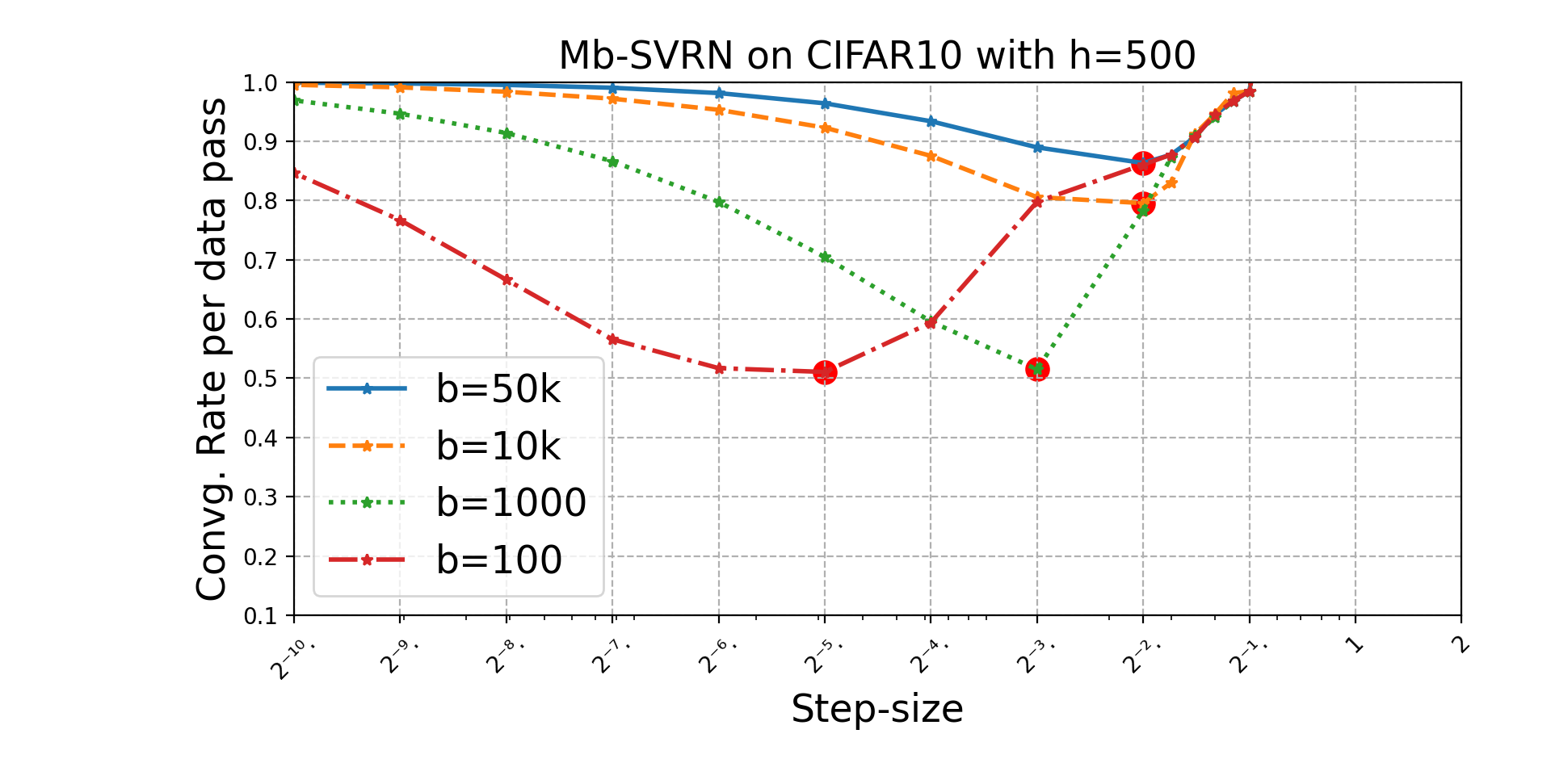}}\\
\subfloat
{\includegraphics[width = \ww]{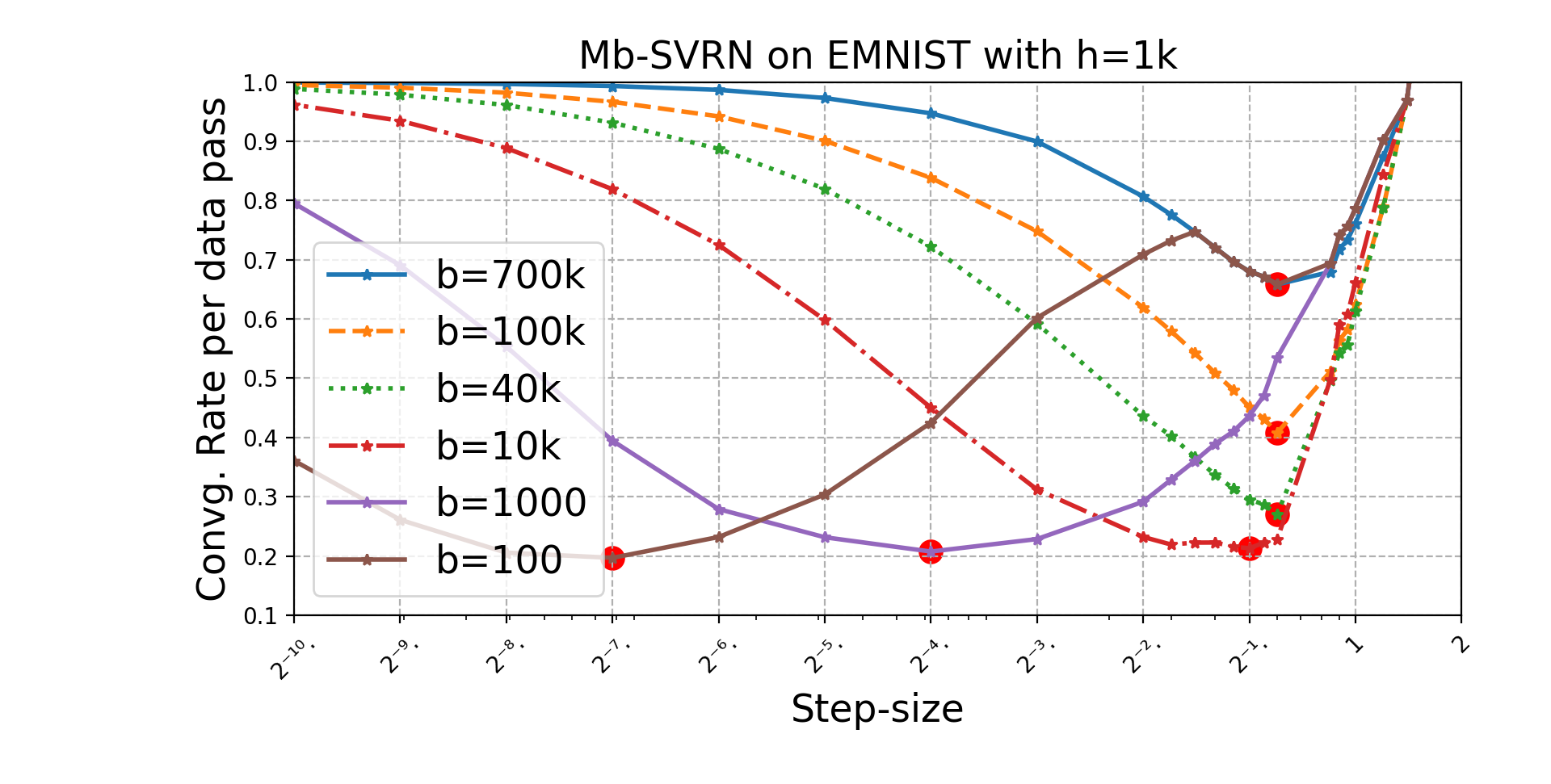}}&
\hspace{-0.75cm}\subfloat
{\includegraphics[width = \ww]{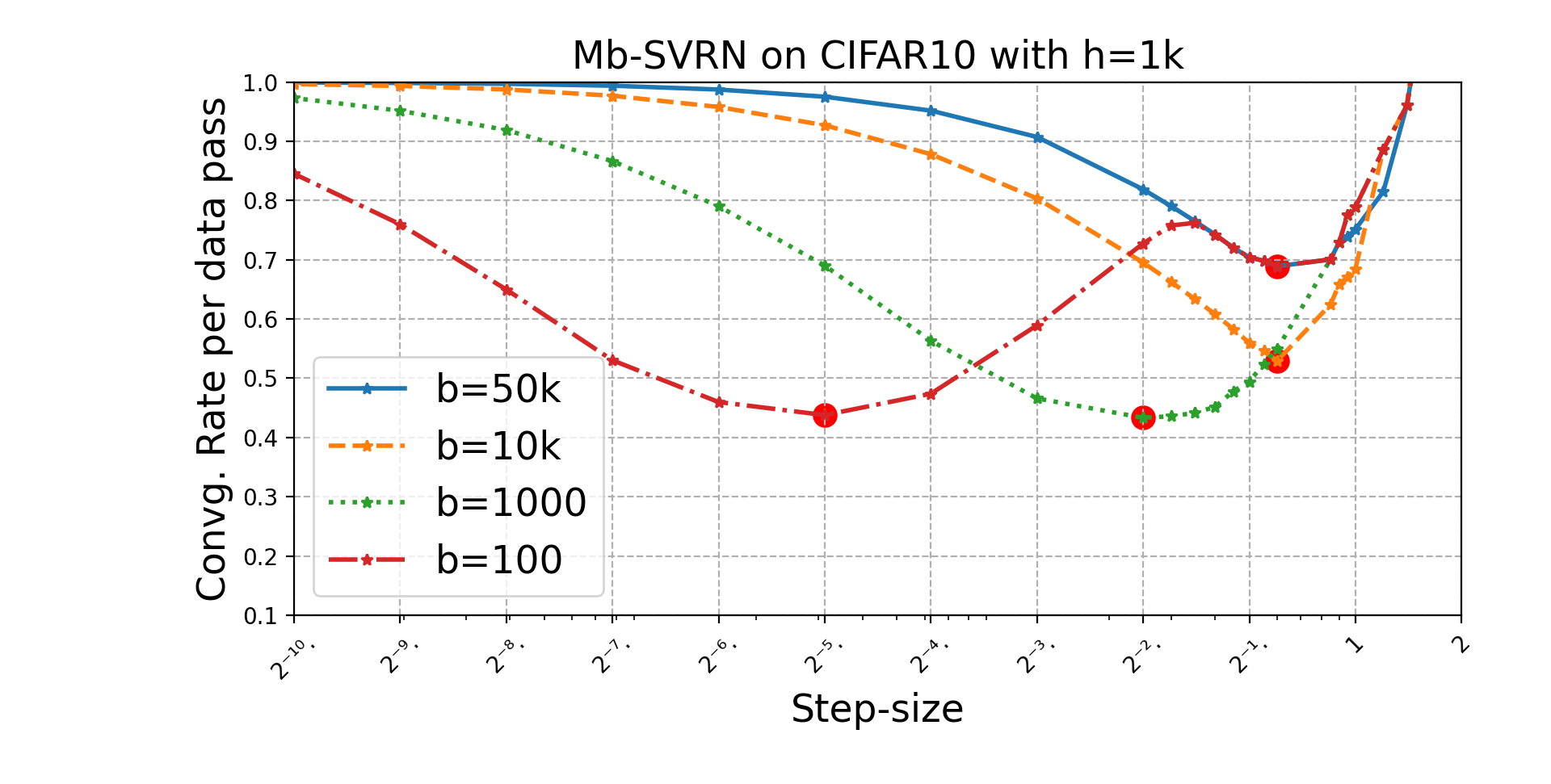}}\\
\subfloat
{\includegraphics[width = \ww]{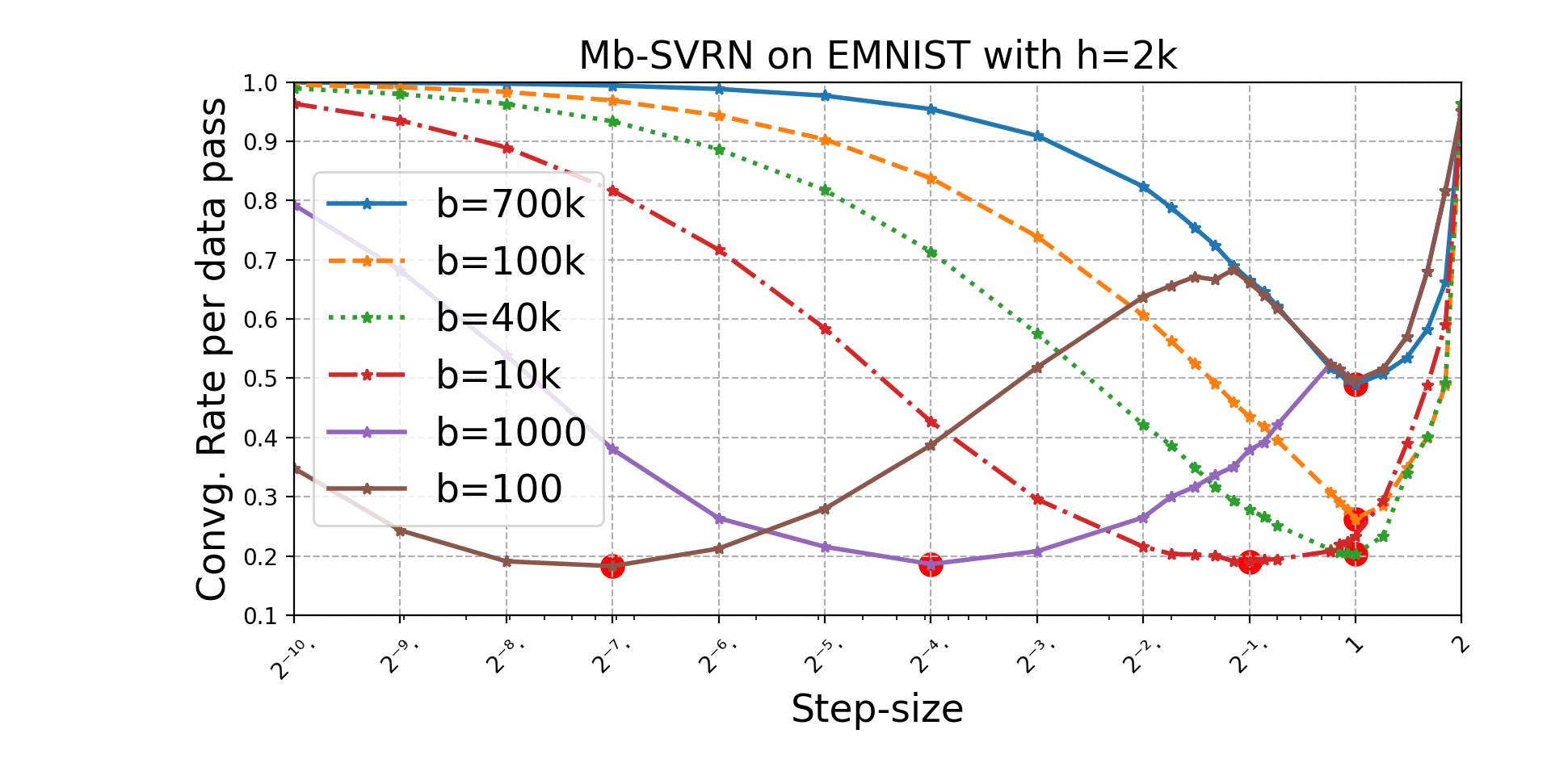}}&
\hspace{-0.75cm}\subfloat
{\includegraphics[width = \ww]{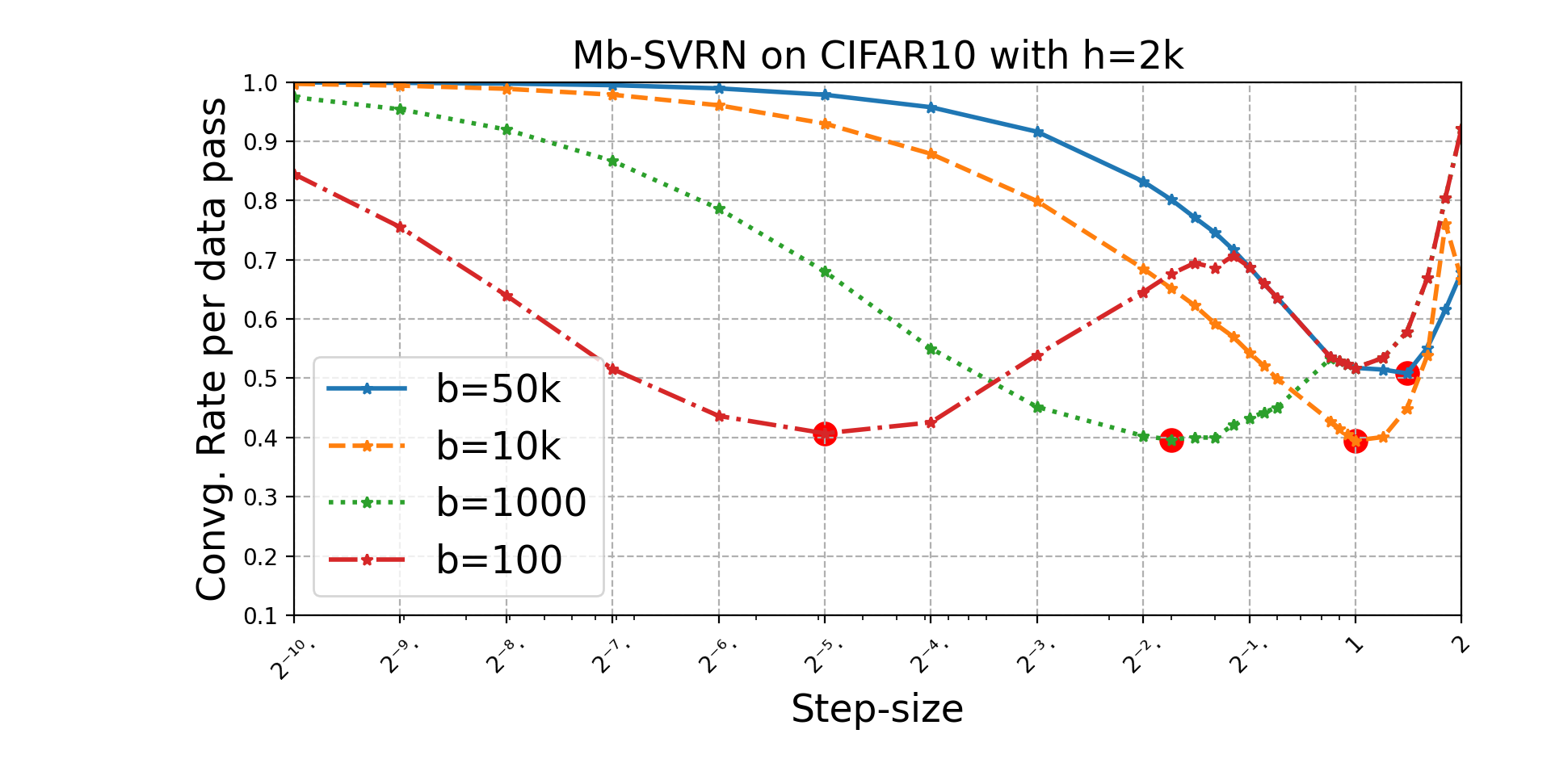}}
\end{tabular}
\caption{Experiments with logistic regression on \texttt{EMNIST} on \texttt{CIFAR10} datasets. The red dots on every curve mark the respective optimal convergence rate attained at the optimal step size. The bottom six plots demonstrate the performance of \texttt{Mb-SVRN} with different Hessian sample sizes $h$. The top two plots demonstrate \texttt{SVRG}'s performance.}
\label{robust_step-size}
\end{figure}

We first discuss Figure \ref{robust_mini_batch} showing the convergence rate of \texttt{Mb-SVRN} as we vary gradient mini-batch size and Hessian sample size for solving regularized logistic regression tasks on \texttt{EMNIST} and \texttt{CIFAR10} datasets. The convergence rates reported are obtained after tuning the convergence rate per data pass with respect to the step size $\eta$ and number of inner iterations, for any $(h,b)$ value pair. The theory suggests that the convergence rate of \texttt{Mb-SVRN} is independent of the gradient mini-batch size $b$ for a wide range of mini-batch sizes, and the plot recovers this phenomenon remarkably accurately. The plot highlights that \texttt{Mb-SVRN} is robust to gradient mini-batch size, since the fast convergence rate of \texttt{Mb-SVRN} is preserved for a very large range of gradient mini-batch sizes (represented by the curves in Figure \ref{robust_mini_batch} staying flat). This can also be observed by noticing the almost straight line alignment of red marker dots in the \texttt{Mb-SVRN} plots in Figure \ref{robust_step-size}. This phenomenon does not hold for first-order variance reduction methods. The plots demonstrate that the performance of \texttt{SVRG} suffers with increasing $b$, which is consistent with the existing convergence analysis of \texttt{SVRG}-type first-order methods \cite{konevcny2013semi,allen2017katyusha}.

We also note that as $b$ increases and enters into a very large gradient mini-batch size regime, the convergence rate of \texttt{Mb-SVRN} starts to deteriorate and effectively turns into Subsampled Newton (\texttt{SN}) when $b=n$. The empirical evidence showing deterioration in convergence rate for very large $b$ values agrees with our theoretical prediction of a phase transition into standard Newton's method after $b> \frac{n}{\alpha\log(n/\kappa)}$. In the extreme case of $b=n$, \texttt{Mb-SVRN} performs only one inner iteration and is the same as \texttt{SN}. As \texttt{SN} uses the exact gradient at every  iteration, its convergence rate per data pass is very sensitive to the Hessian approximation quality, which makes it substantially worse than \texttt{Mb-SVRN} for small-to-moderate Hessian sample sizes $h$.

\subsection{Robustness to step size}{\label{step_size_experiments}}
In addition to the demonstrated robustness to gradient mini-batch size, \texttt{Mb-SVRN} exhibits empirical resilience to step size variations. As depicted in Figure \ref{robust_step-size}, the convergence rate for small $b$ values closely aligns with the optimal rate (red dot) when the step size approaches the optimal range. In contrast, the convergence rate of Subsampled Newton $(b=n)$ sharply increases near its optimal step size. This suggests that the convergence rate of \texttt{Mb-SVRN} with small-to-moderate gradient mini-batch size is more robust to changes in step size as compared to using very large gradient mini-batches or the full gradients. Intuitively, with smaller $b$ values, the algorithm performs numerous inner iterations, relying more on variance reduction. This advantage offsets the impact of step size changes. Conversely, with very large $b$ values, \texttt{Mb-SVRN} selects larger step sizes, reducing the number of optimal inner iterations and limiting the variance reduction advantage. On the other hand, for \texttt{SVRG} with very large gradient mini-batches, deviating slightly from the optimal step size can significantly impact the convergence rate per data pass, as seen with the sharp changes in convergence in the top two plots of Figure \ref{robust_step-size}.

\section{Conclusions and Future Directions}

We have shown that incorporating second-order information into a stochastic variance-reduced method allows it to scale more effectively, and to scale to very large mini-batches. We have demonstrated this by analyzing the convergence of \texttt{Mb-SVRN}, a prototypical stochastic second-order method with variance reduction, and have shown that its associated convergence rate per data pass remains optimal for a very wide range of gradient mini-batch sizes (up to $n/\alpha\log(n)$).
Our main theoretical result provides a convergence guarantee robust to the gradient mini-batch size with high probability through a novel martingale concentration argument. Furthermore, empirically we have shown the robustness of \texttt{Mb-SVRN} not only to mini-batch size, but also to the step size and the Hessian approximation quality. Our algorithm, analysis, and implementation uses \texttt{SVRG}-type variance reduced gradients, and as such, a natural question pertains to whether the algorithm can be extended to use other (perhaps biased) variance reduction techniques. Another interesting future direction is to investigate the effect of using alternate sampling methods while selecting component gradients, as well as the effect of incorporating  acceleration into the method.


 \bibliographystyle{spmpsci}      
\bibliography{main}   

\begin{thebibliography}{10}
\providecommand{\url}[1]{{#1}}
\providecommand{\urlprefix}{URL }
\expandafter\ifx\csname urlstyle\endcsname\relax
  \providecommand{\doi}[1]{DOI~\discretionary{}{}{}#1}\else
  \providecommand{\doi}{DOI~\discretionary{}{}{}\begingroup \urlstyle{rm}\Url}\fi

\bibitem{allen2017katyusha}
Allen-Zhu, Z.: Katyusha: The first direct acceleration of stochastic gradient methods.
\newblock The Journal of Machine Learning Research \textbf{18}(1), 8194--8244 (2017)

\bibitem{allen2018katyusha}
Allen-Zhu, Z.: Katyusha x: Practical momentum method for stochastic sum-of-nonconvex optimization.
\newblock arXiv preprint arXiv:1802.03866  (2018)

\bibitem{berahas2020investigation}
Berahas, A.S., Bollapragada, R., Nocedal, J.: An investigation of newton-sketch and subsampled newton methods.
\newblock Optimization Methods and Software \textbf{35}(4), 661--680 (2020)

\bibitem{berahas2016multi}
Berahas, A.S., Nocedal, J., Tak{\'a}c, M.: A multi-batch l-bfgs method for machine learning.
\newblock Advances in Neural Information Processing Systems \textbf{29} (2016)

\bibitem{berahas2023accelerating}
Berahas, A.S., Shi, J., Yi, Z., Zhou, B.: Accelerating stochastic sequential quadratic programming for equality constrained optimization using predictive variance reduction.
\newblock Computational Optimization and Applications \textbf{86}, 79–--116 (2023)

\bibitem{bollapragada2019exact}
Bollapragada, R., Byrd, R.H., Nocedal, J.: Exact and inexact subsampled newton methods for optimization.
\newblock IMA Journal of Numerical Analysis \textbf{39}(2), 545--578 (2019)

\bibitem{bollapragada2018progressive}
Bollapragada, R., Nocedal, J., Mudigere, D., Shi, H.J., Tang, P.T.P.: {A progressive batching L-BFGS method for machine learning}.
\newblock In: International Conference on Machine Learning, pp. 620--629. PMLR (2018)

\bibitem{bottou2018optimization}
Bottou, L., Curtis, F.E., Nocedal, J.: Optimization methods for large-scale machine learning.
\newblock SIAM review \textbf{60}(2), 223--311 (2018)

\bibitem{derezinski2022stochastic}
Derezi{\'n}ski, M.: Stochastic variance-reduced newton: Accelerating finite-sum minimization with large batches.
\newblock NeurIPS OPT 2023 Optimization in Machine Learning workshop  (2023)

\bibitem{derezinski2018batch}
Derezinski, M., Mahajan, D., Keerthi, S.S., Vishwanathan, S., Weimer, M.: Batch-expansion training: an efficient optimization framework.
\newblock In: International Conference on Artificial Intelligence and Statistics, pp. 736--744. PMLR (2018)

\bibitem{erdogdu2015convergence}
Erdogdu, M.A., Montanari, A.: Convergence rates of sub-sampled newton methods.
\newblock Advances in Neural Information Processing Systems \textbf{28} (2015)

\bibitem{gonen2016solving}
Gonen, A., Orabona, F., Shalev-Shwartz, S.: Solving ridge regression using sketched preconditioned svrg.
\newblock In: International conference on machine learning, pp. 1397--1405. PMLR (2016)

\bibitem{gower2016stochastic}
Gower, R., Goldfarb, D., Richt{\'a}rik, P.: Stochastic block bfgs: Squeezing more curvature out of data.
\newblock In: International Conference on Machine Learning, pp. 1869--1878. PMLR (2016)

\bibitem{johnson2013accelerating}
Johnson, R., Zhang, T.: Accelerating stochastic gradient descent using predictive variance reduction.
\newblock Advances in neural information processing systems \textbf{26} (2013)

\bibitem{konevcny2013semi}
Konecn{\`y}, J., Richt{\'a}rik, P.: Semi-stochastic gradient descent methods.
\newblock arXiv preprint arXiv:1312.1666  (2013)

\bibitem{lin2015universal}
Lin, H., Mairal, J., Harchaoui, Z.: A universal catalyst for first-order optimization.
\newblock Advances in neural information processing systems \textbf{28} (2015)

\bibitem{liu2019acceleration}
Liu, Y., Feng, F., Yin, W.: Acceleration of svrg and katyusha x by inexact preconditioning.
\newblock In: International Conference on Machine Learning, pp. 4003--4012. PMLR (2019)

\bibitem{lucchi2015variance}
Lucchi, A., McWilliams, B., Hofmann, T.: A variance reduced stochastic newton method.
\newblock arXiv preprint arXiv:1503.08316  (2015)

\bibitem{minsker2011some}
Minsker, S.: On some extensions of bernstein's inequality for self-adjoint operators.
\newblock arXiv preprint arXiv:1112.5448  (2011)

\bibitem{mokhtari2018iqn}
Mokhtari, A., Eisen, M., Ribeiro, A.: Iqn: An incremental quasi-newton method with local superlinear convergence rate.
\newblock SIAM Journal on Optimization \textbf{28}(2), 1670--1698 (2018)

\bibitem{moritz2016linearly}
Moritz, P., Nishihara, R., Jordan, M.: A linearly-convergent stochastic l-bfgs algorithm.
\newblock In: Artificial Intelligence and Statistics, pp. 249--258. PMLR (2016)

\bibitem{na2023hessian}
Na, S., Derezi{\'n}ski, M., Mahoney, M.W.: Hessian averaging in stochastic newton methods achieves superlinear convergence.
\newblock Mathematical Programming \textbf{201}(1), 473--520 (2023)

\bibitem{nesterov2018lectures}
Nesterov, Y., et~al.: Lectures on convex optimization, vol. 137.
\newblock Springer (2018)

\bibitem{pilanci2017newton}
Pilanci, M., Wainwright, M.J.: Newton sketch: A near linear-time optimization algorithm with linear-quadratic convergence.
\newblock SIAM Journal on Optimization \textbf{27}(1), 205--245 (2017)

\bibitem{rigollet2011neyman}
Rigollet, P., Tong, X.: Neyman-pearson classification, convexity and stochastic constraints.
\newblock Journal of Machine Learning Research  (2011)

\bibitem{robbins1951stochastic}
Robbins, H., Monro, S.: A stochastic approximation method.
\newblock The annals of mathematical statistics pp. 400--407 (1951)

\bibitem{roosta2019sub}
Roosta-Khorasani, F., Mahoney, M.W.: Sub-sampled newton methods.
\newblock Mathematical Programming \textbf{174}(1), 293--326 (2019)

\bibitem{roux2012stochastic}
Roux, N., Schmidt, M., Bach, F.: A stochastic gradient method with an exponential convergence rate for finite training sets.
\newblock Advances in neural information processing systems \textbf{25} (2012)

\bibitem{schmidt2017minimizing}
Schmidt, M., Le~Roux, N., Bach, F.: Minimizing finite sums with the stochastic average gradient.
\newblock Mathematical Programming \textbf{162}, 83--112 (2017)

\bibitem{shalev2013stochastic}
Shalev-Shwartz, S., Zhang, T.: Stochastic dual coordinate ascent methods for regularized loss minimization.
\newblock Journal of Machine Learning Research \textbf{14}(1) (2013)

\bibitem{sharpe1989mean}
Sharpe, W.F.: Mean-variance analysis in portfolio choice and capital markets (1989)

\bibitem{tropp2011freedman}
Tropp, J.: Freedman's inequality for matrix martingales.
\newblock Electronic Communications in Probability \textbf{16}, 262--270 (2011)

\bibitem{tropp2012user}
Tropp, J.A.: User-friendly tail bounds for sums of random matrices.
\newblock Foundations of computational mathematics \textbf{12}, 389--434 (2012)

\bibitem{xiao2014proximal}
Xiao, L., Zhang, T.: A proximal stochastic gradient method with progressive variance reduction.
\newblock SIAM Journal on Optimization \textbf{24}(4), 2057--2075 (2014)

\end{thebibliography}

\appendix
\section{Proofs}{\label{appendix}}

\subsection{Proof of Lemma \ref{lma1}.}{\label{plma1}}
\begin{proof}
     As each $\psi_i$ is convex and $\lambda$-smooth, the following relation holds (see Theorem 2.1.5 in \cite{nesterov2018lectures}),
\begin{align*}
\nsq{\nabla{\psi}_i(\x_t) - \nabla{\psi}_i(\T{\x})} \leq 2\lambda\cdot(\psi_i(\T{\x}) - \psi_i(\x_t) - \langle \nabla{\psi}_i(\x_t) , \T{\x} - \x_t \rangle).
\end{align*}
Consider the variance of the stochastic gradient if we use just one sample, $\psi_i$ for calculating the stochastic gradient. The variance is given as $\E[\nsq{\nabla{\psi}_i(\x_t) - \nabla{\psi}_i(\T{\x}) +\g(\T{\x}) - \g(\x_t)} ]$.
\begin{align}
\E_t[\nsq{\nabla{\psi}_i(\x_t) - \nabla{\psi}_i(\T{\x}) +\g(\T{\x}) - \g(\x_t)}]& = \E_t[\nsq{\nabla{\psi}_i(\x_t) - \nabla{\psi}_i(\T{\x}) - (\g(\x_t)-\g(\T{\x}))}] \nonumber \\
& \leq \E_t[\nsq{\nabla{\psi}_i(\x_t) - \nabla{\psi}_i(\T{\x})}] \nonumber\\
& \leq 2\lambda\cdot\E_t[(\psi_i(\T{\x}) - \psi_i(\x_t) - \langle \nabla{\psi}_i(\x_t) , \T{\x} - \x_t \rangle)] \nonumber\\
& = 2\lambda \cdot (f(\T{\x}) - f(\x_t) - \langle \g(\x_t) , \T{\x} - \x_t \rangle). \label{var1}
\end{align}
Similarly, if we use $b$ samples and upper bound $\E \nsq{\bar{\g}(\x_t)- \g(\x_t)}$,
\begin{align*}
\E_t\nsq{\bar{\g}(\x_t)- \g(\x_t)} &= \E_t \nsq{\frac{1}{b}\sum_{t=1}^{b}{\nabla\psi_{i_t}(\x_t)} - \frac{1}{b}\sum_{t=1}^{b}{\nabla\psi_{i_t}(\T{\x})} + \g(\T{\x}) - \g(\x_t) }\\
& = \frac{1}{b^2}\E_t\nsq{\sum_{t=1}^{b}{\nabla\psi_{i_t}(\x_t)}-\sum_{t=1}^{b}{\nabla\psi_{i_t}(\T{\x})} + b\cdot\g(\T{\x}) - b\cdot\g(\x_t)}\\
& = \frac{1}{b^2} \E_t\nsq{\sum_{t=1}^{b}{(\nabla\psi_{i_t}(\x_t) -\nabla\psi_{i_t}(\T{\x}) + \g(\T{\x}) - \g(\x_t) )}}.
\end{align*}
Now as all the indices $i_t$ are chosen independently, we can write the variance of the sum as the sum of individual variances.
\begin{align}
\E_t \nsq{\bar{\g}_t- \g_t} &= \frac{1}{b^2}\sum_{t=1}^{b}{\E_t\nsq{(\nabla\psi_{i_t}(\x_t) -\nabla\psi_{i_t}(\T{\x}) + \g(\T{\x}) - \g(\x_t) )}} \nonumber\\
& = \frac{1}{b}\E_t\nsq{\nabla{\psi}_i(\x_t) - \nabla{\psi}_i(\T{\x}) +\g(\T{\x}) - \g(\x_t)} \nonumber \\
& \leq \frac{2\lambda}{b} (f(\T{\x}) - f(\x_t) - \langle \g(\x_t) , \T{\x} - \x_t \rangle), \label{var2}
\end{align}
where in the last inequality, we used (\ref{var1}). Since $f$ has continuous first and second-order derivatives, we can use the quadratic Taylor's expansion for $f$ around $\x_t$. For vectors $\ai$ and $\vi$, $\exists \ \theta \in [0,1]$ such that,
\begin{align*}
 f(\ai+\vi) = f(\ai) + \langle \nabla{f}(\ai), \vi \rangle + \frac{1}{2}\vi^\top\nabla^2{f}(\ai+\theta \vi)\vi.
\end{align*}
Let $\ai=\x_t$, $\vi=\T{\x} - \x_t$, we get,
\begin{align*}
f(\T{\x}) - f(\x_t) - \langle \nabla{f}(\x_t),(\T{\x}-\x_t) \rangle = \frac{1}{2}(\T{\x}-\x_t)^\top\nabla^2{f}(\x_t + \theta (\T{\x}-\x_t))(\T{\x}-\x_t).
\end{align*}
Using the assumption  $\T\x, \x_t \in \mathcal{U}_f(c\epsilon_0\eta)$, we have $\x_t + \theta (\T{\x}-\x_t) \in \mathcal{U}_f(c\epsilon_0\eta) $. Take $\z = \x_t + \theta (\T{\x}-\x_t)$, we have $\frac{1}{1+c\epsilon_0\eta}\cdot\Hi\preceq \nabla^2{f}(\z) \preceq (1+c\epsilon_0\eta)\cdot\Hi$,
\begin{align*}
\frac{1}{2}(\T{\x}-\x_t)^\top\nabla^2{f}(\x_t + \theta (\T{\x}-\x_t)) ^\top (\T{\x}-\x_t) &= \frac{1}{2}\nsq{\x_t-\T{\x}}_{\nabla^2{f}(\z)}\\
& \leq \frac{1}{2}(1+ c\epsilon_0\eta)\normHsq{\T{\x}-\x_t}.
\end{align*}
implying,
\begin{align*}
f(\T{\x}) - f(\x_t) - \langle \nabla{f}(\x_t),(\T{\x}-\x_t) \rangle \leq \frac{1}{2}(1+c\epsilon_0\eta)\normHsq{\T{\x}-\x_t}.
\end{align*}
Substitute the above relation in (\ref{var2}), we get,
\begin{align*}
\E_t\nsq{\bar{\g}_t-\g_t} \leq (1+c\epsilon_0\eta)\frac{\lambda}{b} \normHsq{\T{\x}-\x_t}.
\end{align*}
\ifdocenter
\else
\qed
\fi
\end{proof}

\subsection{Proof of Lemma \ref{lma2}.}{\label{plma2}}
\begin{proof}
For the sake of this proof we abuse the notation and write $\x_{\textbf{AN}}$ as $\x_{t+1}$. However we make clear that \texttt{Mb-SVRN} uses $\x_{t+1} = \x_t- \eta\hat{\Hi}^{-1}\bar{\g}_t$. In this proof we denote $\x_{t+1} = \x_{\textbf{AN}} = \x_t -\eta\hat{\Hi}^{-1}\g_t$,  $\Delta_{t+1} = \x_{\textbf{AN}} -\x^*$, and $\Delta_t = \x_t-\x^*$.
    \begin{align*}
        \Delta_{t+1} &= \Delta_t - \eta\hat{\Hi}^{-1}\g_t\\
        & = \Delta_t - \eta \hat{\Hi}^{-1}(\g_t-\g(\x^*))\\
        & = \Delta_t - \eta\hat{\Hi}^{-1}\int_{0}^{1}{\nabla^2{f}(\x^* + \theta\Delta_t)\Delta_td\theta}\\
        & = \Delta_t - \eta\hat{\Hi}^{-1}\bar{\Hi}\Delta_t\\
        \Rightarrow \norm{\Delta_{t+1}}_{\bar{\Hi}} &= \norm{(\I-\eta\hat{\Hi}^{-1}\bar{\Hi})\Delta_t}_{\bar{\Hi}},
    \end{align*}
where $\bar{\Hi} = \int_{0}^{1}{\nabla^2{f}(\x^* + \theta\Delta_t)d\theta} $. We upper bound $\norm{\Delta_{t+1}}_{\bar{\Hi}}$ as,
\begin{align*}
    \norm{\Delta_{t+1}}_{\bar{\Hi}} &= \norm{(\I-\eta\hat{\Hi}^{-1}\bar{\Hi})\Delta_t}_{\bar{\Hi}}\\
    & = \norm{\bar{\Hi}^{1/2}(\I-\eta\hat{\Hi}^{-1}\bar{\Hi})\Delta_t}\\
    & = \norm{(\I-\eta\bar{\Hi}^{1/2}\hat{\Hi}^{-1}\bar{\Hi}^{1/2})\bar{\Hi}^{1/2}\Delta_t}\\
    & \leq \norm{\I-\eta\bar{\Hi}^{1/2}\hat{\Hi}^{-1}\bar{\Hi}^{1/2}}\cdot\norm{\Delta_t}_{\bar{\Hi}}.
\end{align*}
As $\x_t \in \mathcal{U}_f(c\epsilon_0\eta)$, for any $0 \leq \theta \leq 1$, we have $\x^* + \theta\Delta_t \in \mathcal{U}_f(c\epsilon_0\eta) $, and therefore $\frac{1}{1+c\epsilon_0\eta}\Hi\preceq \bar{\Hi} \preceq (1+c\epsilon_0\eta)\Hi$. Also we have $\frac{1}{\sqrt{\alpha}}\Tilde\Hi \preceq \hat{\Hi} \preceq \sqrt{\alpha}\Tilde\Hi$ and $\frac{1}{1+\epsilon_0\eta}\Hi \preceq \Tilde\Hi \preceq (1+\epsilon_0\eta)\Hi$. Combining these positive semidefinite orderings for $\T\Hi$ and $\hat{\Hi}$ along with $1+\epsilon_0\eta <2$, we have $\frac{1}{2\sqrt{\alpha}}\Hi\preceq \hat{\Hi} \preceq 2\sqrt{\alpha}\Hi$. We get,
\begin{align*}
&\frac{1}{2\sqrt{\alpha}(1+c\epsilon_0\eta)}\bar{\Hi}^{-1} \preceq\hat{\Hi}^{-1} \preceq 2\sqrt{\alpha}(1+c\epsilon_0\eta)\bar{\Hi}^{-1},\\
&\frac{\eta}{2\sqrt{\alpha}(1+c\epsilon_0\eta)}\I \preceq \eta\bar{\Hi}^{1/2}\hat{\Hi}^{-1}\bar{\Hi}^{1/2} \preceq 2\eta\sqrt{\alpha}(1+c\epsilon_0\eta)\I,
\end{align*}
implying that,
\begin{align*}
    \norm{\I-\eta\bar{\Hi}^{1/2}\hat{\Hi}^{-1}\bar{\Hi}^{1/2}} \leq \max\{1-\frac{\eta}{2\sqrt{\alpha}(1+c\epsilon_0\eta)},2\eta\sqrt{\alpha}(1+c\epsilon_0\eta)-1\}.
\end{align*}
Since $\eta < \frac{1}{4\sqrt{\alpha}}$, and $\epsilon_0 < \frac{1}{8c\sqrt{\alpha}}$, maximum value would be $1-\frac{\eta}{2\sqrt{\alpha}(1+c\epsilon_0\eta)}$. We get,
\begin{align*}
     \norm{\I-\eta\bar{\Hi}^{1/2}\hat{\Hi}^{-1}\bar{\Hi}^{1/2}}
     \leq 1-\frac{\eta}{4\sqrt{\alpha}},
\end{align*}
and hence,
\begin{align*}
    \norm{\Delta_{t+1}}_{\bar{\Hi}} \leq \left(1-\frac{\eta}{4\sqrt{\alpha}}\right)\norm{\Delta_t}_{\bar{\Hi}}.
\end{align*}
Changing  $\norm{\cdot}_{\bar{\Hi}}$ to $\norm{\cdot}_{\Hi}$ we get,
\begin{align*}
    \normH{\Delta_{t+1}} &\leq \sqrt{1+c\epsilon_0\eta}\norm{\Delta_{t+1}}_{\bar{\Hi}} \leq \sqrt{1+c\epsilon_0\eta}\cdot \left(1-\frac{\eta}{4\sqrt{\alpha}}\right)\norm{\Delta_t}_{\bar{\Hi}} \leq (1+c\epsilon_0\eta)\cdot\left(1-\frac{\eta}{4\sqrt{\alpha}}\right)\norm{\Delta_t}_{\Hi}\\
    &\leq \left(1-\eta\left(\frac{1}{4\sqrt{\alpha}}-c\epsilon_0\right) - \frac{c\epsilon_0\eta^2}{4\sqrt{\alpha}}\right)\normH{\Delta_t}.
\end{align*}
Using $\epsilon_0 < \frac{1}{8c\sqrt{\alpha}}$, we conclude,
\begin{align*}
\normH{\Delta_{t+1}} \leq \left(1-\frac{\eta}{8\sqrt{\alpha}}\right)\normH{\Delta_{t}}.
\end{align*}

\end{proof}
\ifdocenter
\else
\qed
\fi

\subsection{Proof of Lemma \ref{lma3}.}{\label{plma3}}

In the proof, we will use a result from ~\cite{minsker2011some}, stated as the following,
\begin{lemma}[\textbf{Matrix Bernstein: Corollary 4.1 from~\cite{minsker2011some}}]
Let $\Y_1, \Y_2,..\Y_m \in \mathbb{C}^d$ be a sequence of random vectors such that $\E \Y_i=0$, $\norm{\Y_i} < U$ almost surely $\forall 1 \leq i \leq m$. Denote $\sigma^2 := \sum_{i=1}^{m}{\E \norm{\Y_i}^2}$. Then $\forall t^2 > \sigma^2 + \frac{tU}{3}$,
\begin{align*}
    \Pr \left( \norm{\sum_{i=1}^{m}\Y_i}_2 >t \right) \leq 28 \text{exp} \left[-\frac{t^2/2}{\sigma^2 + tU/3} \right].
\end{align*}
\end{lemma}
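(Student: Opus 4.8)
Since this lemma is quoted verbatim from \cite{minsker2011some}, the task reduces to recalling how one derives a \emph{dimension-free} vector Bernstein inequality. The plan is a two-step reduction. First I would pass from vectors in $\mathbb C^d$ to self-adjoint matrices of size $d+1$ via the standard Hermitian dilation, which turns $\norm{\sum_i \Y_i}$ into the largest eigenvalue of a sum of independent Hermitian matrices. Second I would apply a matrix Bernstein inequality whose prefactor is governed by an \emph{intrinsic} (effective) dimension rather than the ambient dimension $d+1$; for the dilated matrices this effective dimension turns out to be exactly $2$, which is the origin of the absolute constant $28$ in the statement.

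For Step~1, set $\X_i := \mathcal D(\Y_i)$, where for $\y\in\mathbb C^d$ we define the $(d+1)\times(d+1)$ Hermitian matrix $\mathcal D(\y)=\bigl[\begin{smallmatrix}0&\y\\ \y^*&0\end{smallmatrix}\bigr]$. The map $\mathcal D$ is real-linear, and $\mathcal D(\y)^2=\diag(\y\y^*,\norm{\y}^2)$, so $\norm{\mathcal D(\y)}_{\mathrm{op}}=\norm{\y}$ and $\lambda_{\max}\bigl(\sum_i\mathcal D(\Y_i)\bigr)=\bigl\|\sum_i\Y_i\bigr\|$. Hence the $\X_i$ are independent, Hermitian, mean-zero, with $\lambda_{\max}(\X_i)\le\norm{\X_i}_{\mathrm{op}}=\norm{\Y_i}<U$ almost surely. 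A short computation gives $\mathbb E[\X_i^2]=\diag(\mathbb E[\Y_i\Y_i^*],\mathbb E\norm{\Y_i}^2)$, so the matrix variance proxy $V:=\sum_i\mathbb E[\X_i^2]=\diag\bigl(\sum_i\mathbb E[\Y_i\Y_i^*],\sigma^2\bigr)$ satisfies $\norm{V}_{\mathrm{op}}=\sigma^2$ (using $\norm{\sum_i\mathbb E[\Y_i\Y_i^*]}_{\mathrm{op}}\le\sum_i\mathbb E\norm{\Y_i}^2=\sigma^2$) and $\mathrm{tr}\,V=2\sigma^2$; thus the intrinsic dimension $\mathrm{tr}(V)/\norm{V}_{\mathrm{op}}$ equals $2$.

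For Step~2, I would invoke the intrinsic-dimension matrix Bernstein inequality — Minsker's self-adjoint Bernstein bound, or equivalently Tropp's intrinsic-dimension version — applied to $\sum_i\X_i$: there is an absolute constant $c$ such that, for every $t$ in the admissible range $t^2>\norm{V}_{\mathrm{op}}+tU/3$,
\[
\Pr\Bigl(\lambda_{\max}\bigl(\textstyle\sum_i\X_i\bigr)>t\Bigr)\le c\,\frac{\mathrm{tr}\,V}{\norm{V}_{\mathrm{op}}}\exp\!\Bigl(-\frac{t^2/2}{\norm{V}_{\mathrm{op}}+tU/3}\Bigr)\le 2c\,\exp\!\Bigl(-\frac{t^2/2}{\sigma^2+tU/3}\Bigr),
\]
and bookkeeping the absolute constant through that proof gives $2c=28$. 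Combining with the identity $\lambda_{\max}(\sum_i\X_i)=\norm{\sum_i\Y_i}$ from Step~1 yields the lemma exactly as stated.

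The only real content sits in the intrinsic-dimension matrix Bernstein inequality used in Step~2, and that is the step I expect to be the main obstacle if one wanted a fully self-contained argument. Its standard proof runs through the matrix Laplace-transform (Ahlswede--Winter / Tropp) method: one bounds $\mathbb E\,\mathrm{tr}\exp(\theta\sum_i\X_i)$ by $\mathrm{tr}\exp(g(\theta)V)$ for a suitable scalar $g$ via Lieb's concavity theorem, then — the delicate part — replaces $\mathrm{tr}\exp(g(\theta)V)$ by roughly $\dim_{\mathrm{eff}}(V)\exp(g(\theta)\norm{V}_{\mathrm{op}})$ by separating the top eigendirection of $V$ from the rest of the spectrum, and finally optimizes over $\theta>0$; the admissibility restriction $t^2>\norm{V}_{\mathrm{op}}+tU/3$ is precisely what makes this last bookkeeping valid (and is what forces $t$ to lie above the scale $\mathbb E\norm{\sum_i\Y_i}\le\sigma$). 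Since the statement is cited, I would simply invoke it rather than reprove it. As an alternative self-contained route, one can instead write $\norm{\sum_i\Y_i}=\sup_{\norm{\v}\le1}\langle\v,\sum_i\Y_i\rangle$ and apply Talagrand's concentration inequality for suprema of empirical processes, combined with the crude bound $\mathbb E\norm{\sum_i\Y_i}\le(\sum_i\mathbb E\norm{\Y_i}^2)^{1/2}=\sigma$, which also produces a dimension-free Bernstein-type tail of the same shape.
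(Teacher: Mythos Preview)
The paper does not give a proof of this lemma at all: it is simply stated as Corollary~4.1 of \cite{minsker2011some} and then applied in the proof of Lemma~\ref{lma3}. You correctly recognize this at the outset, and the sketch you supply (Hermitian dilation to pass from vectors to self-adjoint matrices, followed by an intrinsic-dimension matrix Bernstein inequality yielding effective dimension $2$) is indeed the route taken in Minsker's paper, so your proposal is appropriate and accurate.
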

We now return to the proof of Lemma \ref{lma3}.
\begin{proof}
Define random vectors $\vi_i$, as $\vi_i =\nabla\psi_i(\x_t)-\nabla\psi_i(\T\x) + \g(\T\x)-\g(\x_t)$. Then $\E[\vi_i]=0$. Also,
\begin{align*}
        \nsq{\vi_i} &\leq 2 \nsq{\nabla\psi_i(\x_t)-\nabla\psi_i(\T\x)} + 2\nsq{\g(\x_t)-\g(\T\x)}\\
        & \leq 4\lambda \left(\psi_i(\T\x) - \psi_i(\x_t)-\langle\nabla\psi_i(\x_t),\T\x-\x_t \rangle\right) + 4\lambda \left(f(\T\x)-f(\x_t)-\langle \nabla{f}(\x_t),\T\x-\x_t\rangle\right)\\
        &\leq 2\lambda^2\nsq{\T\x-\x_t} + 2\lambda^2\nsq{\T\x-\x_t}\\
        & = 4\lambda^2\nsq{\T\x-\x_t}.
    \end{align*}
We also have the following upper bound on the variance of $\vi_i$:
\begin{align*}
\E[\nsq{\nabla{\psi}_i(\x_t) - \nabla{\psi}_i(\T{\x}) +\g(\T{\x}) - \g(\x_t)}]& = \E[\nsq{\nabla{\psi}_i(\x_t) - \nabla{\psi}_i(\T{\x}) - (\g(\x_t)-\g(\T{\x}))}] \nonumber \\
& \leq \E[\nsq{\nabla{\psi}_i(\x_t) - \nabla{\psi}_i(\T{\x})}] \nonumber\\
& \leq 2\lambda\cdot\E[(\psi_i(\T{\x}) - \psi_i(\x_t) - \langle \nabla{\psi}_i(\x_t) , \T{\x} - \x_t \rangle)] \nonumber\\
& = 2\lambda \cdot (f(\T{\x}) - f(\x_t) - \langle \g(\x_t) , \T{\x} - \x_t \rangle).
\end{align*}
Since $f$ is twice continuously differentiable, there exists a $\theta \in [0,1]$ such that,
\begin{align*}
f(\T{\x}) - f(\x_t) - \langle \g(\x_t),(\T{\x}-\x_t) \rangle = \frac{1}{2}(\T{\x}-\x_t)^\top\nabla^2{f}(\x_t + \theta (\T{\x}-\x_t)) ^\top (\T{\x}-\x_t).
\end{align*}
Using the assumption that $\x_t,\T\x \in \mathcal{U}_f(c\epsilon_0\eta)$, with $\z = \x_t + \theta (\T{\x}-\x_t)$ we have $\z_t \in \mathcal{U}_f(c\epsilon_0\eta)$. With $\epsilon_0 < \frac{1}{8c\sqrt{\alpha}}$, and $\eta < \frac{1}{4\sqrt{\alpha}}$,
\begin{align*}
\frac{1}{2}(\T{\x}-\x_t)^\top\nabla^2{f}(\x_t + \theta (\T{\x}-\x_t)) ^\top (\T{\x}-\x_t) &= \frac{1}{2}\nsq{\x_t-\T{\x}}_{\nabla^2{f}(\z)}\\
& \leq \frac{1}{2}(1+ c\epsilon_0\eta)\normHsq{\T{\x}-\x_t}\\
& \leq \normHsq{\T{\x}-\x_t}.
\end{align*}
We get,
\begin{align*}
    \E\nsq{\vi_i} \leq 2\lambda\normHsq{\T{\x}-\x_t}.
\end{align*}
So we take $U = 2\lambda \norm{\T\x-\x_t}$ and $\sigma^2 = 2b\lambda \normHsq{\T\x-\x_t}$.
Also note that, $U \leq 2\frac{\lambda}{\sqrt{\mu}}\normH{\T\x-\x_t}$. Look for $t$ such that,
\begin{align*}
    \exp\left(-\frac{t^2/2}{\sigma^2 + tU/3}\right) < \delta\frac{b^2}{n^2}
\end{align*}
\begin{align*}
    \iff&\frac{t^2/2}{\sigma^2 + tU/3} > 2\ln(n/b\delta)\\
    \iff &t^2 >4 \sigma^2\ln(n/b\delta) + \frac{4tU}{3}\ln(n/b\delta)\\
\iff &\frac{t^2}{2} + \frac{t^2}{2}>4 \sigma^2\ln(n/b\delta) + \frac{4tU}{3}\ln(n/b\delta).
\end{align*}
Now in the regime of $b< \frac{8}{9}\kappa$ we have $2\sigma\ln(n/b\delta) < \frac{4}{3}U\ln(n/b\delta)$. Consider $\Y_i=\vi_i$, $t=\frac{8}{3}U\ln(n/b\delta) = \frac{4}{3}.\frac{2\lambda}{\sqrt{\mu}}\normH{\T\x-\x_t} \ln(\frac{n}{b\delta})$. For this value of $t$, we get with probability $1-\delta\frac{b^2}{n^2}$,
\begin{align*}
     \norm{\sum_{i=1}^{b}{\Y_i}} \leq \frac{8}{3}.\frac{2\lambda}{\sqrt{\mu}}\normH{\T\x-\x_t} \ln(\frac{n}{b\delta}).
\end{align*}
This means with probability $1-\delta\frac{b^2}{n^2}$,
\begin{align*}
    \norm{\g(\x_t)-\bar{\g}(\x_t)} \leq \frac{16\lambda}{3b\sqrt{\mu}}\ln(n/b\delta)\normH{\x_t-\T\x}.
\end{align*}
Moreover, in the regime of $b \geq \frac{8}{9}\kappa$, we have $2\sigma\ln(n/b\delta) \geq \frac{4}{3}U\ln(n/b\delta)$. In this case set $t= 2\sqrt{2}\sigma\ln(n/b\delta)= 4\sqrt{b\lambda}\normH{\x_t-\T\x}\ln(n/b\delta)$. We conclude,
\begin{align*}
    \norm{\g(\x_t)-\bar{\g}(\x_t)} \leq \frac{4\sqrt{\lambda}}{\sqrt{b}}\ln(n/b\delta)\normH{\x_t-\T\x}.
\end{align*}
\end{proof}
\ifdocenter
\else
\qed
\fi

\subsection{Proof of Lemma \ref{MVT}.}{\label{proof_MVT}}
\begin{proof}
    Since $\g^*=0$, we have $\normH{\hat{\Hi}^{-1}\g_t} =\normH{\hat{\Hi}^{-1}(\g_t-\g^*)}$. Consider the following,
\begin{align*}
\normH{\hat{\Hi}^{-1}\g_t} =\normH{\hat{\Hi}^{-1}(\g_t-\g^*)}
    & = \normH{\hat{\Hi}^{-1}\int_{0}^{1}{\nabla^2{f}(\x^* + \theta(\x_t-\x^*))\Delta_t\cdot d\theta}}.
\end{align*}
Denoting $\bar{\Hi} =\int_{0}^{1}{\nabla^2{f}(\x^* + \theta(\x_t-\x^*))\cdot d\theta} $, we get,
\begin{align*}
     \normH{\hat{\Hi}^{-1}(\g_t-\g^*)}&=\normH{\hat{\Hi}^{-1}\bar{\Hi}\Delta_t}\\
    &=\norm{\Hi^{1/2}\hat{\Hi}^{-1}\bar{\Hi}\Delta_t}\\
    &=\norm{\Hi^{1/2}\hat{\Hi}^{-1}\bar{\Hi}\Hi^{-1/2}\Hi^{1/2}\Delta_t}\\
&\leq\norm{\Hi^{1/2}\hat{\Hi}^{-1}\bar{\Hi}\Hi^{-1/2}}\normH{\Delta_t}\\
    & =  \norm{\underbrace{\Hi^{1/2}\hat{\Hi}^{-1}\Hi^{1/2}}\underbrace{\Hi^{-1/2}\bar{\Hi}\Hi^{-1/2}}}\normH{\Delta_t}\\
    & \leq \norm{\Hi^{1/2}\hat{\Hi}^{-1}\Hi^{1/2}}\cdot\norm{\Hi^{-1/2}\bar{\Hi}\Hi^{-1/2}}\cdot\normH{\Delta_t}.
\end{align*}
Since we have $\hat{\Hi} \approx_{\sqrt{\alpha}} \T\Hi$ and $\T\x \in \mathcal{U}_f(\epsilon_0\eta)$, we have $\hat{\Hi} \approx_{\sqrt{\alpha}(1+\epsilon_0\eta)} \Hi$. Furthermore for  $\x_t \in \mathcal{U}_f(c\epsilon_0\eta)$, we have $\bar{\Hi}\approx_{1+c\epsilon_0\eta}\Hi$, because for all $\theta, \x^* + \theta(\x_t-\x^*) \in \mathcal{U}_f(c\epsilon_0\eta) $. So we use the results $\norm{\Hi^{1/2}\hat{\Hi}^{-1}\Hi^{1/2}} \leq \sqrt{\alpha}(1+\epsilon_0\eta)$ and $\norm{\Hi^{-1/2}\bar{\Hi}\Hi^{-1/2}} \leq (1+c\epsilon_0\eta)$ and get,
\begin{align*}
   \normH{\hat{\Hi}^{-1}(\g_t-\g^*)}&\leq\sqrt{\alpha}(1+\epsilon_0\eta)(1+c\epsilon_0\eta)\normH{\Delta_t}\\
   &< 2\sqrt{\alpha}\normH{\Delta_t}.
\end{align*}
\end{proof}
\ifdocenter
\else
\qed
\fi

\subsection{Proof of Theorem \ref{Freedman}.}{\label{Freedman_proof}}

    In our proof of Theorem \ref{Freedman}, we use a Master tail bound for adapted sequenced from \cite{tropp2011freedman}. Let $\mathcal{F}_k$ be a filtration and random process $(Y_k)_{k\geq 0}$ be $\mathcal{F}_k$ measurable. Also let another random process $V_k$ such that $V_k$ is $\mathcal{F}_{k-1}$ measurable. Consider the difference sequence for $k\geq 1$,
    \begin{align*}
        X_k = Y_{k}- Y_{k-1}.
    \end{align*}
    Also, assume the following relation holds for a function $g:(0,\infty) \rightarrow [0,\infty]$:
    \begin{align*}
        \E_{k-1}e^{\theta\X_k} \leq e^{g(\theta) V_k}.
    \end{align*}
    Then we have,
    \begin{theorem}[Master tail bound for adapted sequences \cite{tropp2011freedman}]{\label{master_tail_bound}}
        For all $\lambda,w \in \R$, we have,
        \begin{align*}
            \Pr\left(\exists k\geq 0: Y_k \geq Y_0 +\lambda \ \ and \ \ \sum_{i=1}^{k}{V_i} \leq \sigma^2\right)\leq \inf_{\theta>0}e^{-\lambda\theta +g(\theta)\sigma^2}.
        \end{align*}
    \end{theorem}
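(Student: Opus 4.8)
The plan is to run the classical exponential-supermartingale argument that underlies all such master tail bounds for adapted sequences (cf. \cite{tropp2011freedman}). Fix $\theta>0$ with $g(\theta)<\infty$ (values of $\theta$ with $g(\theta)=\infty$ contribute nothing to the infimum) and introduce the auxiliary process
\[
W_k^{\theta}\;=\;\exp\!\Big(\theta\,(Y_k-Y_0)\;-\;g(\theta)\textstyle\sum_{i=1}^{k}V_i\Big),\qquad W_0^\theta=1,
\]
which is nonnegative and, since the partial sums $\sum_{i=1}^k V_i$ are $\mathcal F_{k-1}$-measurable (hence $\mathcal F_k$-measurable), adapted to $(\mathcal F_k)_{k\ge0}$. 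First I would check that $(W_k^\theta)$ is a supermartingale: factoring $W_k^\theta=W_{k-1}^\theta\,e^{\theta X_k}\,e^{-g(\theta)V_k}$ and pulling out the $\mathcal F_{k-1}$-measurable factors $W_{k-1}^\theta$ and $e^{-g(\theta)V_k}$, the hypothesis $\E_{k-1}e^{\theta X_k}\le e^{g(\theta)V_k}$ gives $\E_{k-1}[W_k^\theta]\le W_{k-1}^\theta$, so in particular $\E[W_k^\theta]\le\E[W_0^\theta]=1$ for every $k$.

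Second, I would translate the target event into a level crossing for $W^\theta$. Put $a=e^{\theta\lambda-g(\theta)\sigma^2}$. On the event $E=\{\exists k\ge0:\,Y_k-Y_0\ge\lambda\ \text{and}\ \sum_{i=1}^k V_i\le\sigma^2\}$ there is an index $k$ for which, using $\theta>0$ and $g(\theta)\ge0$,
\[
W_k^\theta\;=\;e^{\theta(Y_k-Y_0)}\,e^{-g(\theta)\sum_{i=1}^kV_i}\;\ge\;e^{\theta\lambda}\,e^{-g(\theta)\sigma^2}\;=\;a,
\]
so $E\subseteq\{\tau_a<\infty\}$ where $\tau_a=\inf\{k\ge0:W_k^\theta\ge a\}$ is a stopping time. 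Then I would invoke the maximal inequality for nonnegative supermartingales through the stopped process: $W_{k\wedge\tau_a}^\theta$ is again a nonnegative supermartingale, so $a\,\Pr(\tau_a\le k)\le\E[W_{k\wedge\tau_a}^\theta\,\mathbf 1_{\{\tau_a\le k\}}]\le\E[W_{k\wedge\tau_a}^\theta]\le\E[W_0^\theta]=1$; letting $k\to\infty$ yields $\Pr(\tau_a<\infty)\le 1/a$, and hence $\Pr(E)\le 1/a=e^{-\theta\lambda+g(\theta)\sigma^2}$.

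Finally, since the left-hand side does not depend on $\theta$, I would take the infimum over admissible $\theta>0$ to obtain the claimed bound. The main obstacle here is bookkeeping rather than any sharp estimate: one must handle that the process is infinitely indexed (so the maximal inequality is justified via the stopped process $W_{k\wedge\tau_a}^\theta$ and a limit, not by naive optional stopping at $\tau_a$), verify that $\tau_a$ and the event $E$ are genuinely measurable with respect to $(\mathcal F_k)$, and use the predictability of $V_k$ (its $\mathcal F_{k-1}$-measurability) precisely where it is needed to preserve the supermartingale property. Degenerate cases — $\lambda\le0$, where the bound may exceed $1$ and is vacuous, and $g(\theta)=\infty$ — should be flagged but are immediate.
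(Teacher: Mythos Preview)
Your proof is correct and is precisely the standard exponential-supermartingale argument that underlies this class of results. However, the paper does not actually prove this theorem: it is stated as a cited result from \cite{tropp2011freedman} and used as a black box in the proof of Theorem~\ref{Freedman}, so there is no ``paper's own proof'' to compare against beyond noting that your argument is the one Tropp gives.
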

    In our proof, we also use the following Lemma from \cite{tropp2011freedman}. The original version of the result assumes that $\E[X]=0$, however, we show below that the proof also holds for the case when $\E[X] <0$.     \begin{lemma}[Freedman MGF,  Lemma 6.7 from \cite{tropp2012user}]{\label{Freedman_mgf}}
        Let $X$ be a random variable such that $\E X \leq 0$ and $X \leq R$ almost surely. Then for any $\theta >0$ and $h(R)=\frac{e^{\theta R} - \theta R-1}{R^2}$,
        \begin{align*}
            \E e^{\theta X} \leq e^{h(R)\E[X^2]}.
        \end{align*}
        
    \end{lemma}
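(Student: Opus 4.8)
The plan is to follow the classical argument for the Freedman moment-generating-function bound, isolating the one place where the weaker hypothesis $\E X \le 0$ (rather than $\E X = 0$) enters. The heart of the matter is an elementary pointwise inequality: for every real $x \le R$ and every $\theta > 0$,
\[
e^{\theta x} \;\le\; 1 + \theta x + h(R)\,x^2, \qquad h(R) = \frac{e^{\theta R} - \theta R - 1}{R^2}.
\]
Granting this, substitute $X$ for $x$ (valid since $X \le R$ almost surely) and take expectations to obtain $\E e^{\theta X} \le 1 + \theta\,\E X + h(R)\,\E X^2$. Since $\theta > 0$ and $\E X \le 0$, the middle term is nonpositive and can be discarded, giving $\E e^{\theta X} \le 1 + h(R)\,\E X^2 \le e^{h(R)\,\E X^2}$, where the last step is $1 + y \le e^y$ with $y = h(R)\,\E X^2 \ge 0$. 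This is precisely the modification advertised in the paper: in the mean-zero case the middle term vanishes, whereas here it is merely nonpositive, which still suffices.

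Thus the whole proof reduces to the pointwise inequality, equivalently to the monotonicity of $\phi(x) := (e^{\theta x} - \theta x - 1)/x^2$, extended continuously by $\phi(0) := \theta^2/2$, on all of $\mathbb{R}$: if $\phi$ is nondecreasing, then $x \le R$ forces $\phi(x) \le \phi(R) = h(R)$, and multiplying through by $x^2 \ge 0$ yields $e^{\theta x} - \theta x - 1 \le h(R)\,x^2$ (the case $x = 0$ being trivial, $0 \le 0$). To prove monotonicity I would put $u = \theta x$ and reduce to showing $g(u) := (e^u - u - 1)/u^2$ is nondecreasing; here the clean device is the integral representation
\[
g(u) = \int_0^1 \int_0^s e^{ur}\,dr\,ds,
\]
checked by evaluating the inner integral as $(e^{us}-1)/u$ and then the outer one as $(e^u - u - 1)/u^2$. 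Differentiating under the integral sign, $g'(u) = \int_0^1\int_0^s r\,e^{ur}\,dr\,ds \ge 0$, because $r \ge 0$ throughout the region $0 \le r \le s \le 1$, so $g$, and hence $\phi$, is nondecreasing.

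I do not expect a genuine obstacle: the only point requiring care is the monotonicity of $\phi$, a standard fact for which the integral-representation argument above sidesteps any awkward derivative sign analysis, together with the verification that the value at $x = 0$ causes no trouble. Once monotonicity is in place, the pointwise inequality, the expectation step, and the final exponentiation are all immediate, and the lemma follows. (If $\E X^2 = \infty$ the claimed bound is vacuously true since $h(R) > 0$, so one may assume $\E X^2 < \infty$ throughout, which also makes the expectation of the right-hand side of the pointwise inequality finite.)
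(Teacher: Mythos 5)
Your proof is correct and follows essentially the same route as the paper: write $e^{\theta x} = 1 + \theta x + x^2 h(x)$, take expectations, discard the middle term using $\E X \le 0$ and $\theta > 0$, bound $h(X)\le h(R)$ by monotonicity of $h$, and finish with $1+y \le e^y$. The one substantive difference is that you actually prove the monotonicity. The paper's surrounding text simply asserts that ``it is easy to show that $h(x)$ is an increasing function of $x$'' and, moreover, only defines $h$ on $[0,\infty)$; but the lemma allows $X$ to take negative values, so what is really needed is that $\phi(x)=(e^{\theta x}-\theta x-1)/x^2$ (with $\phi(0)=\theta^2/2$) is nondecreasing on all of $\R$. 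Your integral representation $g(u)=\int_0^1\int_0^s e^{ur}\,dr\,ds$ with $g'(u)=\int_0^1\int_0^s r e^{ur}\,dr\,ds\ge 0$ is a clean and complete way to establish this over the whole real line, and the remark that the claim is vacuous when $\E X^2=\infty$ is a sensible bit of bookkeeping. So: same decomposition and same key inequality, but your write-up closes a gap the paper leaves implicit.
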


    \begin{proof}
        Consider the Taylor series expansion of $e^{\theta x}$,
        \begin{align*}
            e^{\theta x} &= 1+x+\frac{x^2}{2!} +\cdots\\
            & = 1+x + x^2 h(x).
        \end{align*}
        Replace $x$ with the random variable $X$ and take expectation on both sides, we get,
        \begin{align*}
            \E e^{\theta X} \leq 1+ \E X + \E[X^2\cdot h(X)].
        \end{align*}
        On the second term use $\E X \leq 0$ and on the third term use $X\leq R$ to get $h(X) \leq h(R)$ almost surely, we get,
        \begin{align*}
            \E e^{\theta X} \leq 1+h(R)\cdot\E[X^2] \leq e^{h(R)\cdot\E[X^2]}.
        \end{align*}
        
    \end{proof}
    \ifdocenter
\else
\qed
\fi
    We now return to the proof of Theorem \ref{Freedman}.
\begin{proof}
    If $Y_k$ is a submartingale we have,
    \begin{align*}
        \E_{k-1}X_k \leq 0,
    \end{align*}
    and also we know that $X_k \leq R$. For $k \geq 1$, consider $V_k = \E_{k-1}(X_k^2)$. Clearly, $V_k$ is $\mathcal{F}_{k-1}$ measurable. We now establish the relation that $\E_{k-1}e^{\theta\X_k} \leq e^{g(\theta) V_k}$.
    For any $\theta>0$, consider a function $h(x):[0,\infty]\rightarrow [\frac{\theta^2}{2},\infty)$ defined  as follows,
        \begin{align*}
            h(x) = \frac{e^{\theta x}-\theta x-1}{x^2} \ ,and \ \ h(0) = \frac{\theta^2}{2}.
        \end{align*}
        It is easy to show that $h(x)$ is an increasing function of $x$.
    Using Lemma \ref{Freedman_mgf} we get $\E_{k-1}e^{\theta\X_k} \leq e^{g(\theta) V_k}$ for $g(\theta) =\frac{e^{\theta R} - \theta R-1}{R^2}$. Now we can use the Master tail bound Theorem \ref{master_tail_bound}. The only thing remaining to analyze is, $$\inf_{\theta>0}e^{\theta \lambda + g(\theta)\cdot\sigma^2}.$$
    Doing little calculus shows that, $\theta = \frac{1}{R}\ln\left(1+\frac{\lambda R}{\sigma^2}\right)$ minimizes $e^{\theta \lambda + g(\theta)\cdot\sigma^2}$ and the minimum value is $e^{-\frac{1}{2}\left(\frac{\lambda}{\sigma^2} + \frac{\lambda}{R}\right)}$. Finally observe that $e^{-\frac{1}{2}\left(\frac{\lambda}{\sigma^2} + \frac{\lambda}{R}\right)} \leq e^{-\frac{1}{4}\min\left(\frac{\lambda}{\sigma^2}, \frac{\lambda}{R}\right)}$. This completes the proof.
\end{proof}
\ifdocenter
\else
\qed
\fi

\subsection{Proof of Theorem \ref{global_convergence}.}{\label{p_global_convergence}}
\begin{proof}
    Using the $\lambda$-smoothness of $f$, we know that
    \begin{align*}
        f(\x_{t+1}) \leq f(\x_t) + \g_t^\top(\x_{t+1}-\x_t) + \frac{\lambda}{2}\nsq{\x_{t+1}-\x_t}.
    \end{align*}
    Substitute $\x_{t+1} = \x_t - \eta\hat{\Hi}^{-1}\bar{\g}_t$, we get,
    \begin{align*}
    f(\x_{t+1}) &\leq f(\x_t) -\eta\g_t^\top\hat{\Hi}^{-1}\bar{\g}_t + \eta^2\frac{\lambda}{2}\nsq{\hat{\Hi}^{-1}\bar{\g}_t}\\
    & = f(\x_t) -\eta\g_t^\top\hat{\Hi}^{-1}\bar{\g}_t + \eta^2\frac{\lambda}{2}\nsq{\hat{\Hi}^{-1}\left(\bar{\g}_t-\g_t +\g_t\right)}.
    \end{align*}
    Take total expectation $\E$ on both sides, by which we mean expectation conditioned only on known $\T\x$. We get, 
    \begin{align*}
        \E f(\x_{t+1}) &\leq \E \left( f(\x_t) -\eta\g_t^\top\hat{\Hi}^{-1}\bar{\g}_t + \eta^2\frac{\lambda}{2}\nsq{\hat{\Hi}^{-1}\left(\bar{\g}_t-\g_t +\g_t\right)}\right)\\
        & = \E \left(\E_t\left( f(\x_t) -\eta\g_t^\top\hat{\Hi}^{-1}\bar{\g}_t + \eta^2\frac{\lambda}{2}\nsq{\hat{\Hi}^{-1}\left(\bar{\g}_t-\g_t +\g_t\right)}\right)\right).
    \end{align*}
    Here in the last inequality, $\E_t$ means conditional expectation, conditioned on known $\x_t$. Analyzing the inner expectation $\E_t$, we get,
    \begin{align*}
        \E f(\x_{t+1}) &\leq \E\left(f(\x_t) -\eta\g_t^\top\hat{\Hi}^{-1}\g_t + \eta^2\frac{\lambda}{2}\E_t\nsq{\hat{\Hi}^{-1}\left(\bar{\g}_t-\g_t +\g_t\right)}\right),
    \end{align*}
    where we obtained that second term due to the unbiasedness of stochastic gradient i.e., $\E_t[\bar{\g_t}] = \g_t$. Furthermore using unbiasedness we have $\E_t\nsq{\hat{\Hi}^{-1}\left(\bar{\g}_t-\g_t +\g_t\right)} = \E_t\nsq{\hat{\Hi}^{-1}\left(\bar{\g}_t-\g_t\right)} +\nsq{\hat{\Hi}^{-1}\g_t}$. We get,
    \begin{align*}
         \E f(\x_{t+1}) &\leq \E \left(f(\x_t) -\eta\g_t^\top\hat{\Hi}^{-1}\g_t +  \eta^2\frac{\lambda}{2}\nsq{\hat{\Hi}^{-1}\g_t}+\eta^2\frac{\lambda}{2}\E_t\nsq{\hat{\Hi}^{-1}\left(\bar{\g}_t-\g_t\right)}\right).
    \end{align*}
     Since $\nsq{\hat{\Hi}^{-1}\g_t} \leq \norm{\hat{\Hi}^{-1}}\cdot\nsq{\hat{\Hi}^{-1/2}\g_t} \leq\frac {\sqrt{\alpha}}{\mu}\nsq{\hat{\Hi}^{-1/2}\g_t}=\frac {\sqrt{\alpha}}{\mu}\g_t^\top\hat{\Hi}^{-1}\g_t$, where we used $\hat{\Hi} \approx_{\sqrt{\alpha}} \T\Hi$ to write $\norm{\hat{\Hi}^{-1}} \leq \frac{\sqrt{\alpha}}{\mu} $. Substituting we get,
     \begin{align*}
         \E f(\x_{t+1}) &\leq \E\left(f(\x_t) -\eta \left(1-\frac{\eta\kappa\sqrt{\alpha}}{2}\right)\g_t^\top\hat{\Hi}^{-1}\g_t + \eta^2\frac{\lambda}{2}\E_t\nsq{\hat{\Hi}^{-1}\left(\bar{\g}_t-\g_t\right)} \right).
     \end{align*}
     Again using $\hat{\Hi} \approx_{\sqrt{\alpha}} \T\Hi$, we get $\g_t^\top\hat{\Hi}^{-1}\g_t \geq \frac{1}{\lambda\sqrt{\alpha}}\nsq{\g_t}$.
     Also note that $\eta < \frac{2}{\kappa\sqrt{\alpha}}$ and hence $\left(1-\frac{\sqrt{\alpha}\eta\kappa}{2}\right)>0$. We get,
     \begin{align*}
          \E f(\x_{t+1}) &\leq \E \left(f(\x_t) -\frac{\eta}{\lambda\sqrt{\alpha}} \left(1-\frac{\eta\kappa\sqrt{\alpha}}{2}\right)\nsq{\g_t} + \eta^2\frac{\lambda}{2}\E_t\nsq{\hat{\Hi}^{-1}\left(\bar{\g}_t-\g_t\right)} \right).
     \end{align*}
     Now use $\E_t\nsq{\hat{\Hi}^{-1}\left(\bar{\g}_t-\g_t\right)} \leq \frac{\alpha}{\mu^2}\E_t\nsq{\bar{\g}_t-\g_t} \leq \frac{8\lambda\alpha}{b\mu^2}\left(f(\x_t)-f(\x^*) + f(\T\x) - f(\x^*)\right) $. Also due to $\mu$-strong convexity of $f$ we have $\nsq{\g_t} \geq 2\mu\left(f(\x_t)-f(\x^*)\right)$. We get,
     \begin{align*}
         \E f(\x_{t+1}) &\leq \E\left(f(\x_t) -\frac{2\eta\mu}{\lambda\sqrt{\alpha}} \left(1-\frac{\eta\kappa\sqrt{\alpha}}{2}\right)\left(f(\x_t)-f(\x^*)\right) + \eta^2\frac{4\lambda^2\alpha}{b\mu^2}\left(f(\x_t)-f(\x^*) + f(\T\x) - f(\x^*)\right)\right)\\
         & = \E\left(f(\x_t) -\frac{2\eta}{\kappa\sqrt{\alpha}} \left(1-\frac{\eta\kappa\sqrt{\alpha}}{2}\right)\left(f(\x_t)-f(\x^*)\right) + \eta^2\frac{4\kappa^2\alpha}{b}\left(f(\x_t)-f(\x^*) + f(\T\x) - f(\x^*)\right)\right).
     \end{align*}
     Subtract $f(\x^*)$ from both sides we get,
     \begin{align}
         \E f(\x_{t+1})-f(\x^*) &\leq \E\left(\underbrace{\left[1-\frac{2\eta}{\kappa\sqrt{\alpha}} + \eta^2\left(1+ \frac{4\kappa^2\alpha}{b}\right)\right]}_{\xib}\cdot \left(f(\x_t)-f(\x^*)\right) + \eta^2\frac{4\kappa^2\alpha}{b}\cdot\left(f(\T\x)-f(\x^*)\right) \right) \nonumber\\
         & = \xib \cdot\E \left(f(\x_t)-f(\x^*)\right) + \eta^2\frac{4\kappa^2\alpha}{b}\left( f(\T\x) - f(\x^*)\right) \label{global_exp_1}
     \end{align}
     We denote $\xib:= 1-\frac{2\eta}{\kappa\sqrt{\alpha}} + \eta^2\left(1+ \frac{4\kappa^2\alpha}{b}\right)$. Since we perform $\frac{n}{b}$ inner iterations before updating $\T\x$, we recursively unfold the relation (\ref{global_exp_1}) for $\frac{n}{b}$ times. This provides the following,
     \begin{align*}
         \E f(\x_{n/b}) - f(\x^*) &\leq \xib^{\frac{n}{b}} \left(f(\T\x) - f(\x^*)\right) + \eta^2\frac{4\kappa^2\alpha}{b}\cdot\left(1+\xib +\xib^2+\cdots+\xib^{\frac{n}{b}-1}\right) \left(f(\T\x) - f(\x^*)\right)\\
         & = \xib^{\frac{n}{b}} \left(f(\T\x) - f(\x^*)\right) + \eta^2\frac{4\kappa^2\alpha}{b}\cdot\left(\frac{1-\xib^{\frac{n}{b}}}{1-\xib}\right)\cdot\left(f(\T\x) - f(\x^*)\right).
     \end{align*}
     For $\eta < \frac{b}{5\kappa^3\alpha^{3/2}}$, we have $\xib < 1-\frac{\eta}{\kappa\sqrt{\alpha}}$. Substituting upper bound for $\xib$ we get,
     \begin{align*}
         \E f(\x_{n/b})- f(\x^*) \leq \left[\left(1-\frac{\eta}{\kappa\sqrt{\alpha}}\right)^{n/b} + \frac{4\eta\kappa^3\alpha^{3/2}}{b}\cdot\left(1-\left(1-\frac{\eta}{\kappa\sqrt{\alpha}}\right)^{n/b}\right)\right]\cdot\left(f(\T\x) - f(\x^*)\right).
     \end{align*}
    Let $\eta < \frac{b}{8\kappa^3\alpha^{3/2}}$, we get,
    \begin{align*}
        \E f(\x_{n/b})- f(\x^*) \leq \left(\frac{1}{2}\left(1-\frac{\eta}{\kappa\sqrt{\alpha}}\right)^{\frac{n}{b}} + \frac{1}{2}\right)\cdot\left(f(\T\x) - f(\x^*)\right).
    \end{align*}
    Now if $\left(1-\frac{\eta}{\kappa\sqrt{\alpha}}\right)^{\frac{n}{b}} < \frac{1}{2}$ we get,
    \begin{align*}
        \E f(\x_{n/b})- f(\x^*) <\frac{3}{4}\cdot\left(f(\T\x) - f(\x^*)\right),
    \end{align*}
    otherwise if $\left(1-\frac{\eta}{\kappa\sqrt{\alpha}}\right)^{\frac{n}{b}} \geq \frac{1}{2}$, then we consider two cases based on the value of $\eta$. If $\eta = \frac{2}{\kappa\sqrt{\alpha}}$ then we get,
    \begin{align*}
        \E f(\x_{n/b})- f(\x^*) < \left(1-\frac{1}{\alpha\kappa^2}\right)\cdot\left(f(\T\x) - f(\x^*)\right),
    \end{align*}
    and if $\eta = \frac{b}{8\kappa^3\alpha^{3/2}}$ then,
    \begin{align*}
        \E f(\x_{n/b})- f(\x^*) < \left(1- \frac{b}{16\kappa^4\alpha^2}\right)\cdot\left(f(\T\x) - f(\x^*)\right).
    \end{align*}
    This completes the proof.   
\end{proof}
\ifdocenter
\else
\qed
\fi

\section{Additional Plots for Synthetic Data Experiments}{\label{synthetic_experiments}}
For designing synthetic data, we considered two random orthonormal matrices $\U \in \R^{n\times d}$ and $\V \in \R^{d \times d}$ and constructed a matrix $\A = \U\Sigmab\V^\top$ where $\Sigmab \in \R^{d \times d}$ is a diagonal matrix with large condition number $(\kappa \approx 6000)$. In $\Sigmab$, we set $(n-1)$ diagonal entries to be $n$ and only one diagonal entry as $n\cdot\kappa$, artificially enforcing a very large condition number. For constructing labels, we set $b_i = \text{sign}(\ai_i^\top\x)$ where $\ai_i$ is the $i^{th}$ row of $\A$ and $\x \in \R^d$ is a random vector drawn from an isotropic multivariate Gaussian distribution. Also, we fix the regularization parameter $\mu$ to $10^{-2}$.
 
Our empirical results for the synthetic data are presented in  Figure \ref{fig_4}.  
In plot \ref{SVRG}, we observe the sensitivity of the peformance of \texttt{SVRG} to the gradient mini-batch size, and step size. Unlike \texttt{SVRG}, we empirically observe the robustness of \texttt{Mb-SVRN} to gradient mini-batch size. This is demonstrated clearly in our plots by an almost flat-line alignment of red dots for $b=10k,40k,$ and $100k$. Note that the optimal step size for \texttt{SVRG} is close to $2^{-6}$ for all gradient mini-batch sizes, and the slight deviation from this optimal step size leads to rapid deterioration in the convergence rate. On the other hand, in Figures~\ref{mbSVRN1}-\ref{mbSVRN3}, we see the convergence rate curves for different gradient mini-batch sizes exhibit a flatter behavior near their respective optimal step sizes, demonstrating the robustness of \texttt{Mb-SVRN}  to the step size. In addition to robustness, the plots also demonstrate the faster overall convergence rate of \texttt{Mb-SVRN}.

\begin{figure}[H]
\ifdocenter
\centering
\fi
\hspace{-2cm}
\begin{tabular}{cc}
\subfloat[\texttt{SVRG\label{SVRG}}]{\includegraphics[width = \ww]{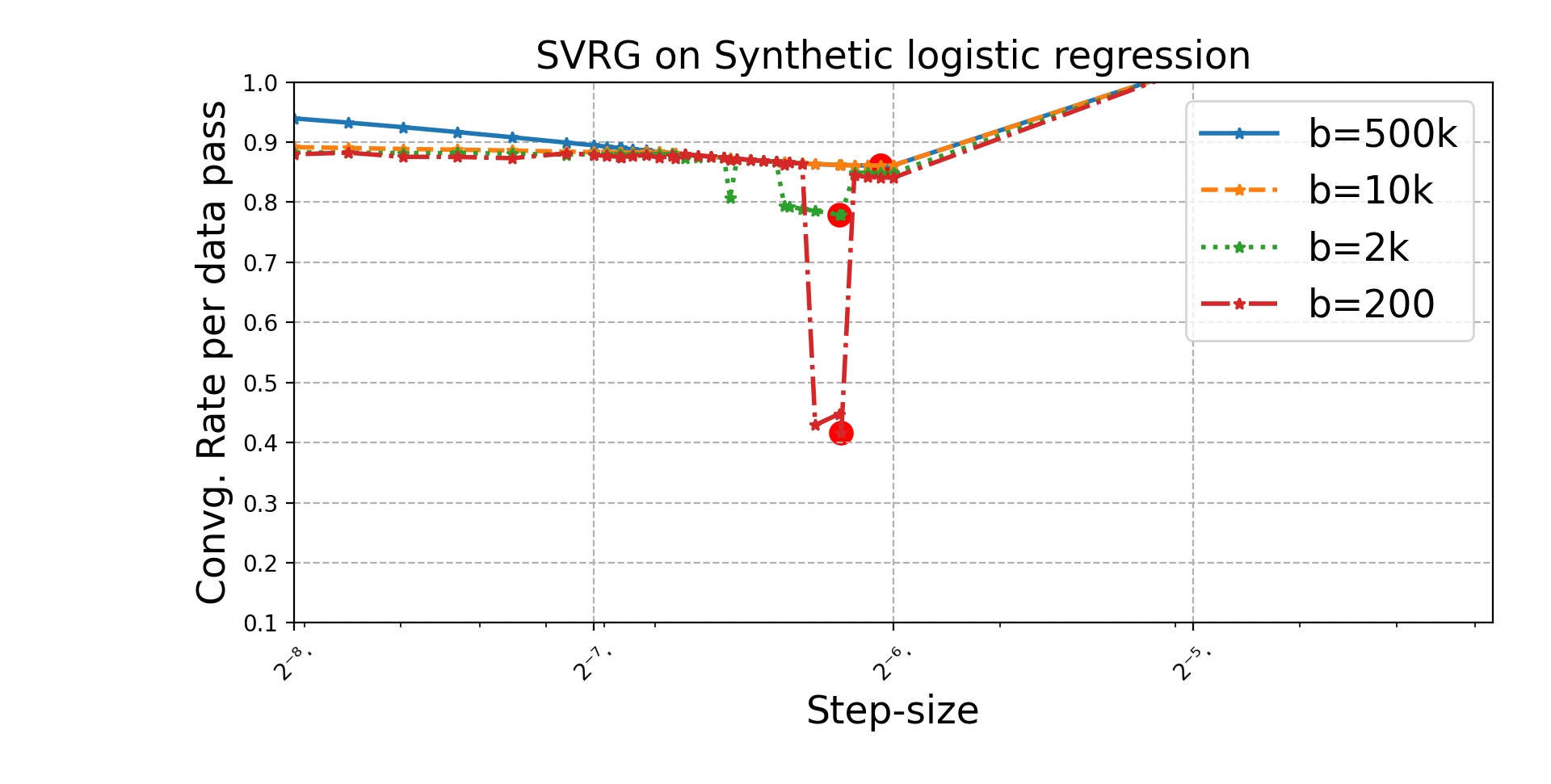}}&
\hspace{-8mm}\subfloat[\texttt{Mb-SVRN}, with $h=200$\label{mbSVRN1}]{\includegraphics[width = \ww]{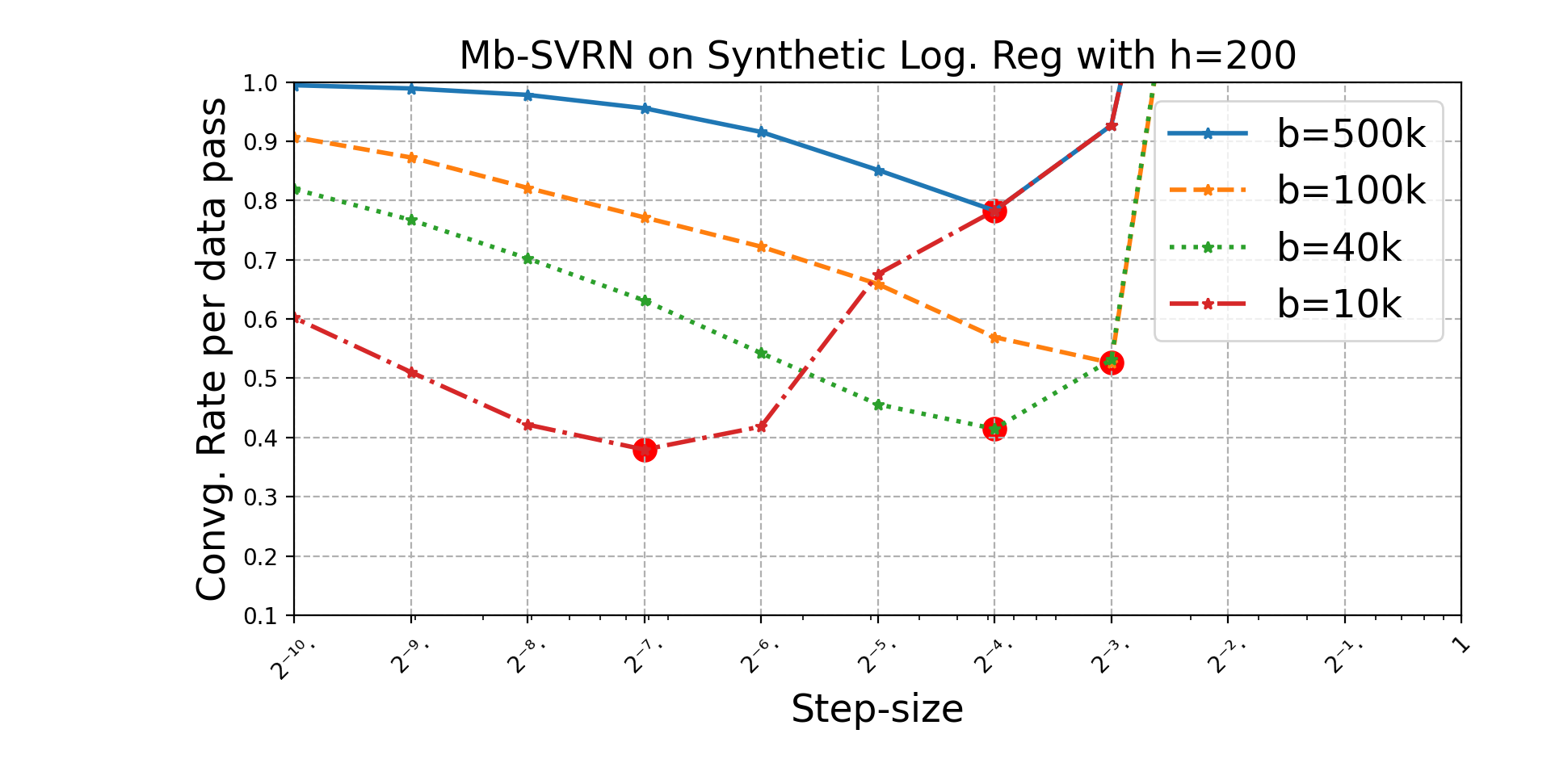}} \\
\subfloat[\texttt{Mb-SVRN}, with $h=1k$\label{mbSVRN2}]{\includegraphics[width = \ww]{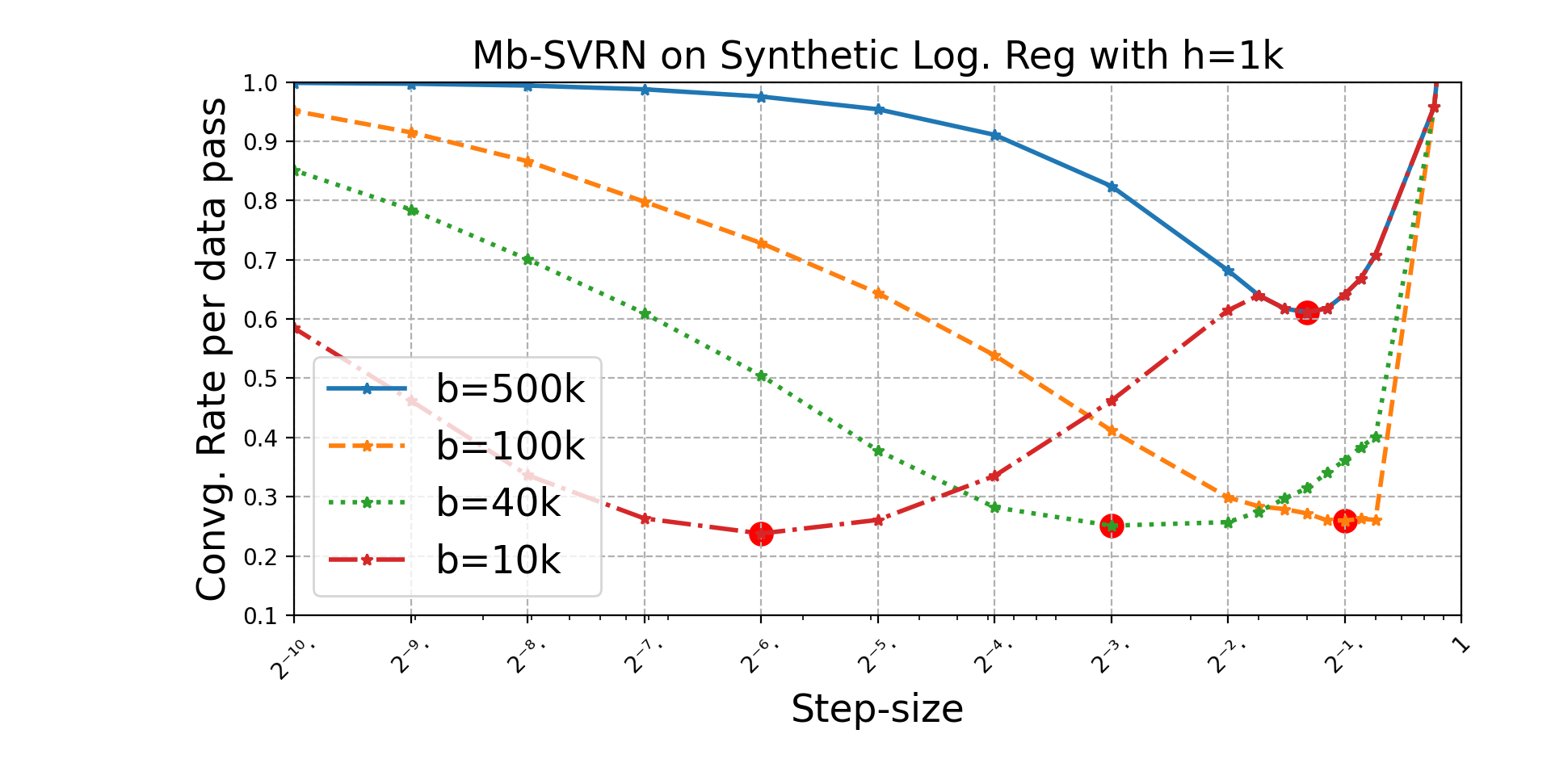}}&
\hspace{-8mm}\subfloat[\texttt{Mb-SVRN}, with $h=10k$\label{mbSVRN3}]{\includegraphics[width = \ww]{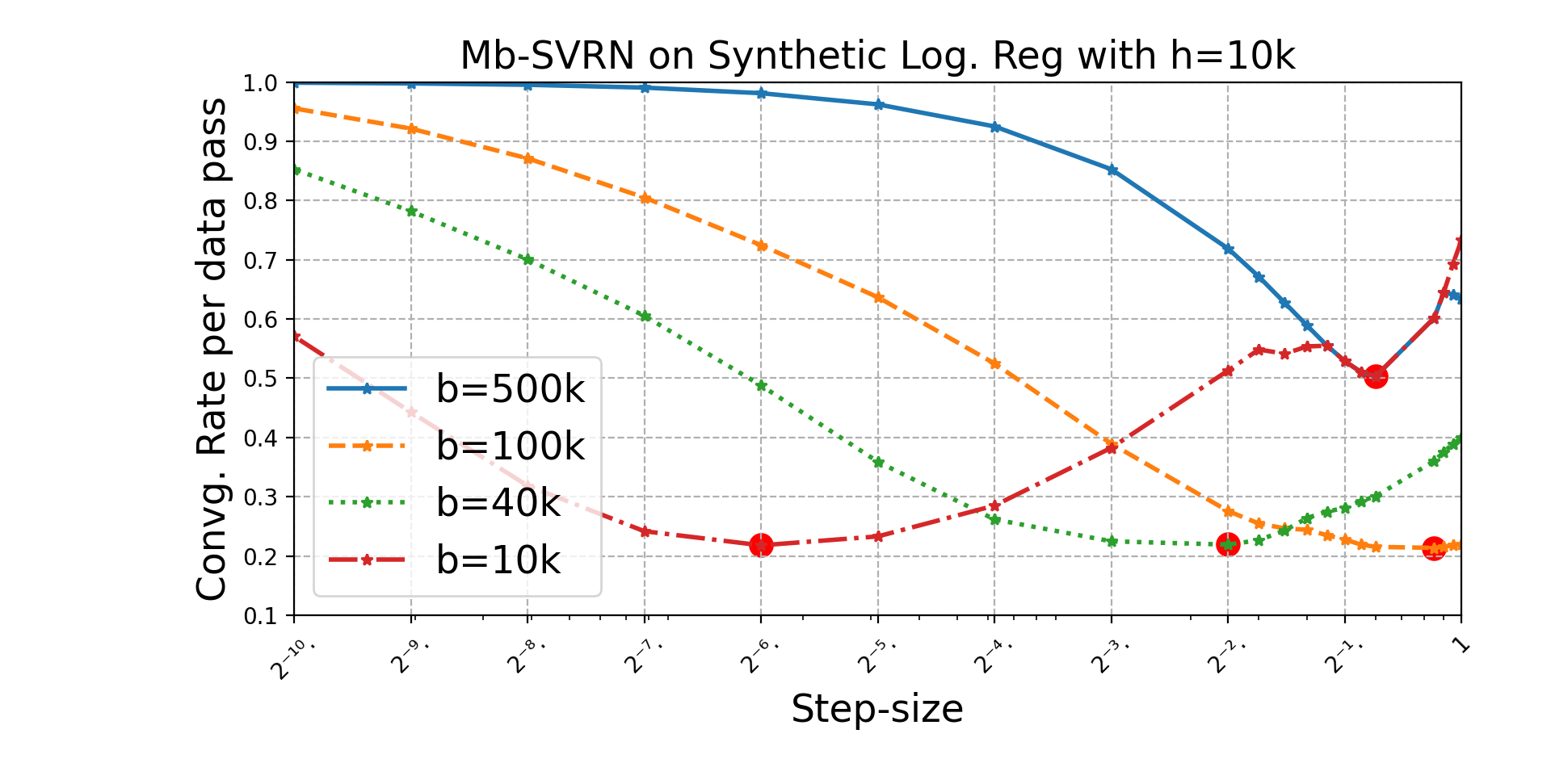}} 
\end{tabular}
\caption{Experiments on the synthetic logistic regression task. The red dots on every curve mark the respective optimal convergence rate attained at the optimal step size. The plot a) demonstrate the performance of \texttt{SVRG} for different gradient mini-batch sizes, whereas plots b) c) and d) demonstrate the performance of \texttt{Mb-SVRN} with different gradient and Hessian mini-batch sizes.}
\label{fig_4}
\end{figure}

\end{document}